\let\old@tocline\@tocline
\let\section@tocline\@tocline
\newcommand{\subsection@dotsep}{4.5}
\newcommand{\subsubsection@dotsep}{4.5}
	\leaders\hbox{$\m@th
		\mkern \subsection@dotsep mu\hbox{.}\mkern \subsection@dotsep mu$}\hfill
\let\subsection@tocline\@tocline
\let\@tocline\old@tocline
	\leaders\hbox{$\m@th
		\mkern \subsubsection@dotsep mu\hbox{.}\mkern \subsubsection@dotsep mu$}\hfill
\let\subsubsection@tocline\@tocline
\let\@tocline\old@tocline
\let\old@l@subsection\l@subsection
\let\old@l@subsubsection\l@subsubsection
\def\@tocwriteb#1#2#3{%
	\begingroup
	\@xp\def\csname #2@tocline\endcsname##1##2##3##4##5##6{%
		\ifnum##1>\c@tocdepth
		\else \sbox\z@{##5\let\indentlabel\@tochangmeasure##6}\fi}%
	\csname l@#2\endcsname{#1{\csname#2name\endcsname}{\@secnumber}{}}%
	\endgroup
	\addcontentsline{toc}{#2}%
	{\protect#1{\csname#2name\endcsname}{\@secnumber}{#3}}}%
\newlength{\@tocsectionindent}
\newlength{\@tocsubsectionindent}
\newlength{\@tocsubsubsectionindent}
\newlength{\@tocsectionnumwidth}
\newlength{\@tocsubsectionnumwidth}
\newlength{\@tocsubsubsectionnumwidth}
\newcommand{\settocsectionnumwidth}[1]{\setlength{\@tocsectionnumwidth}{#1}}
\newcommand{\settocsubsectionnumwidth}[1]{\setlength{\@tocsubsectionnumwidth}{#1}}
\newcommand{\settocsubsubsectionnumwidth}[1]{\setlength{\@tocsubsubsectionnumwidth}{#1}}
\newcommand{\settocsectionindent}[1]{\setlength{\@tocsectionindent}{#1}}
\newcommand{\settocsubsectionindent}[1]{\setlength{\@tocsubsectionindent}{#1}}
\newcommand{\settocsubsubsectionindent}[1]{\setlength{\@tocsubsubsectionindent}{#1}}
\renewcommand{\l@section}{\section@tocline{1}{\@tocsectionvskip}{\@tocsectionindent}{}{\@tocsectionformat}}%
\renewcommand{\l@subsection}{\subsection@tocline{2}{\@tocsubsectionvskip}{\@tocsubsectionindent}{}{\@tocsubsectionformat}}%
\renewcommand{\l@subsubsection}{\subsubsection@tocline{3}{\@tocsubsubsectionvskip}{\@tocsubsubsectionindent}{}{\@tocsubsubsectionformat}}%
\newcommand{\@tocsectionformat}{}
\newcommand{\@tocsubsectionformat}{}
\newcommand{\@tocsubsubsectionformat}{}
\def\csname toc@1format\endcsname{\@tocsectionformat}
\def\csname toc@2format\endcsname{\@tocsubsectionformat}
\def\csname toc@3format\endcsname{\@tocsubsubsectionformat}
\newcommand{\settocsectionformat}[1]{\renewcommand{\@tocsectionformat}{#1}}
\newcommand{\settocsubsectionformat}[1]{\renewcommand{\@tocsubsectionformat}{#1}}
\newcommand{\settocsubsubsectionformat}[1]{\renewcommand{\@tocsubsubsectionformat}{#1}}
\newlength{\@tocsectionvskip}
\newcommand{\settocsectionvskip}[1]{\setlength{\@tocsectionvskip}{#1}}
\newlength{\@tocsubsectionvskip}
\newcommand{\settocsubsectionvskip}[1]{\setlength{\@tocsubsectionvskip}{#1}}
\newlength{\@tocsubsubsectionvskip}
\newcommand{\settocsubsubsectionvskip}[1]{\setlength{\@tocsubsubsectionvskip}{#1}}
\patchcmd{\tocsection}{\indentlabel}{\makebox[\@tocsectionnumwidth][l]}{}{}
\patchcmd{\tocsubsection}{\indentlabel}{\makebox[\@tocsubsectionnumwidth][l]}{}{}
\patchcmd{\tocsubsubsection}{\indentlabel}{\makebox[\@tocsubsubsectionnumwidth][l]}{}{}
\newcommand{\@sectypepnumformat}{}
\renewcommand{\contentsline}[1]{%
	\expandafter\let\expandafter\@sectypepnumformat\csname @toc#1pnumformat\endcsname%
	\csname l@#1\endcsname}
\newcommand{\@tocsectionpnumformat}{}
\newcommand{\@tocsubsectionpnumformat}{}
\newcommand{\@tocsubsubsectionpnumformat}{}
\newcommand{\setsectionpnumformat}[1]{\renewcommand{\@tocsectionpnumformat}{#1}}
\newcommand{\setsubsectionpnumformat}[1]{\renewcommand{\@tocsubsectionpnumformat}{#1}}
\newcommand{\setsubsubsectionpnumformat}[1]{\renewcommand{\@tocsubsubsectionpnumformat}{#1}}
\renewcommand{\@tocpagenum}[1]{%
	\hfill {\mdseries\@sectypepnumformat #1}}
\let\oldappendix\appendix
\renewcommand{\appendix}{%
	\leavevmode\oldappendix%
	\addtocontents{toc}{%
		\protect\settowidth{\protect\@tocsectionnumwidth}{\protect\@tocsectionformat\sectionname\space}%
		\protect\addtolength{\protect\@tocsectionnumwidth}{2em}}%
}
\let\oldtableofcontents\tableofcontents
\renewcommand{\tableofcontents}{%
	\vspace*{-\linespacing}% Default gap to top of CONTENTS is \linespacing.
	\oldtableofcontents}
\newcommand{\ZZ}{\mathbb{Z}}
\newcommand{\RR}{\mathbb{R}}
\newcommand{\kk}{\Bbbk}
\newcommand{\cC}{\mathcal{C}}
\newcommand{\cE}{\mathcal{E}}
\newcommand{\cL}{\mathcal{L}}
\newcommand{\cM}{\mathcal{M}}
\newcommand{\cO}{\mathcal{O}}
\newcommand{\cT}{\mathcal{T}}
\newcommand{\scrT}{\mathscr{T}}
\DeclareSymbolFont{toneitalic}{T1}{\familydefault}{m}{it}
\DeclareMathSymbol{\cdel}{\mathord}{toneitalic}{"F0}
\newcommand{\mh}{}
\mathchardef\mh="2D
\DeclareMathOperator{\Hom}{Hom}
\DeclareMathOperator{\End}{End}
\newcommand{\del}{\partial}
\newcommand{\nec}{\mathrm{nec}}
\newcommand{\coev}{\mathrm{coev}}
\newcommand{\id}{\mathrm{id}}
\newcommand{\ad}{\mathrm{ad}}
\DeclareMathOperator{\sheafHom}{\mathcal{H}\kern -1.2pt \mathit{om}}
\newcommand{\tikzfig}[1]{\begin{tikzpicture}[auto,baseline={([yshift=-.5ex]current bounding box.center)}]#1\end{tikzpicture}}
\newtheorem{theorem}{Theorem}
\newtheorem{proposition}[theorem]{Proposition}
\newtheorem{corollary}[theorem]{Corollary}
\newtheorem{lemma}[theorem]{Lemma}
\newtheorem*{theorem*}{Theorem}
\theoremstyle{definition}
\newtheorem{definition}{Definition}
\theoremstyle{remark}
\newtheorem*{remark}{Remark}
\newtheorem*{example}{Example}
\title[Smooth CY structures and the NC Legendre transform]{Smooth Calabi-Yau structures and the noncommutative Legendre transform}
\author{Maxim Kontsevich, Alex Takeda and Yiannis Vlassopoulos}
\begin{document}
% tikz stuff
\tikzset{>={Stealth[scale=1.2]}}
\tikzset{->-/.style={decoration={
			markings,
			mark=at position #1 with {\arrow{>}}},postaction={decorate}}}
\tikzset{-w-/.style={decoration={
			markings,
			mark=at position #1 with {\arrow{Stealth[fill=white,scale=1.4]}}},postaction={decorate}}}
\tikzset{->-/.default=0.65}
\tikzset{-w-/.default=0.65}
\tikzstyle{bullet}=[circle,fill=black,inner sep=0.5mm]
\tikzstyle{circ}=[circle,draw=black,fill=white,inner sep=0.5mm]
\tikzstyle{vertex}=[circle,draw=black,thick,inner sep=0.5mm]
\tikzstyle{dot}=[draw,circle,fill=black,minimum size=0.5mm,inner sep = 0mm, outer sep = 0mm]
\tikzset{darrow/.style={double distance = 4pt,>={Implies},->},
	darrowthin/.style={double equal sign distance,>={Implies},->},
	tarrow/.style={-,preaction={draw,darrow}},
	qarrow/.style={preaction={draw,darrow,shorten >=0pt},shorten >=1pt,-,double,double
		distance=0.2pt}}
	
\begin{abstract}
	We elucidate the relation between smooth Calabi-Yau structures and pre-Calabi-Yau structures. We show that, from a smooth Calabi-Yau structure on an $A_\infty$-category $A$, one can produce a pre-Calabi-Yau structure on $A$; as defined in our previous work, this is a shifted noncommutative version of an integrable polyvector field. We explain how this relation is an analogue of the Legendre transform, and how it defines a one-to-one mapping, in a certain homological sense. For concreteness, we apply this formalism to chains on based loop spaces of (possibly non-simply connected) Poincar\'e duality spaces, and fully calculate the case of the circle.
\end{abstract}

\maketitle
\tableofcontents

\section{Introduction}
This paper is a continuation of our previous work \cite{kontsevich2021precalabiyau}. There, we described a type of algebraic structure that we called a \emph{pre-Calabi-Yau} structure on an $A_\infty$-algebra/category $A$; this is a generalization of both proper and smooth Calabi-Yau structures. In that paper we described how, using the formalism of ribbon quivers (that is, ribbon graphs with acyclic orientation), one can use the pre-CY structure maps to describe the action of certain PROP on the morphism spaces of $A$, and on its Hochschild chains $C_*(A)$. The relevant dg PROP has spaces of operations given by chains on moduli spaces of open-closed surfaces with framed boundaries, with at least one input and one output.

We can motivate this result by using the language of the cobordism hypothesis: a pre-CY structure on $A$ should give a partially defined fully extended 2d oriented TQFT with values in (an $\infty$-categorical version) of $2$-category of algebras and bimodules, assigning $A$ to the point and $HH_*(A)$ to the framed circle. This theory will be partially defined in the sense that it does not assign a value to every cobordism, but rather only to those cobordisms that can be generated by handles of index one only; such a cobordism has at least one input and one output.

If instead one admits cobordisms generated by handles of indices one and two, one can cap the outputs of the cobordism, and obtain all cobordisms with at least one input. That type of TQFT is known to be described by proper Calabi-Yau structures; see Lurie's description \cite{lurie2009classification} of Costello's results in \cite{costello2007topological,costello2005gromovwitten}. In other words, requiring the finiteness condition of $A$ being proper (that is, $H^*(A)$ being finite-rank) allows one to evaluate caps in the TQFT, which get sent to the trace $HH_*(A) \to \kk$ defined by the proper CY structure.

There is another finiteness condition that is dual to properness, which is homological smoothness: $A$ is homologically smooth if the diagonal bimodule $A_\Delta$ is a perfect object in the category of $(A,A)$-bimodules. In the work cited above, Lurie mentions in passing that, from abstract reasons, there should be a dual story to Costello's description of this TQFT: smooth Calabi-Yau structures on $A$ should give a dual type of partially-defined TQFT, which now has a \emph{cup}; this gets sent to a cotrace $\kk \to HH_*(A)$. These TQFTs are also, in practice, described by algebra structures over certain PROPs, given by chains on spaces of surfaces with non-empty incoming/outgoing boundary. The homotopy theory of these objects has been studied in detail elsewhere; see \cite{desmukh2022homotopical} for a recent description of these PROPs and their relation to Deligne-Mumford compactifications.

As a corollary to these statements about cobordisms and TQFTs, it should be the case that a smooth Calabi-Yau structure defines a pre-CY structure. The main purpose of this work is to make this result as explicit as possible: we demonstrate that there is an algorithmic procedure, using the formalism of ribbon quivers we defined previously, which starts from a smooth Calabi-Yau structure on an $A_\infty$-category $A$ and produces the structure maps of a pre-CY structure on the same $A$. The point of having such an explicit description is that many categories/algebras of interest in topology and geometry have such smooth CY structures \cite{ganatra2019cyclic,bozec2021calabiyau1,bozec2021calabiyau2,kaplan2019multiplicative,shende2016calabiyau}. Using the description in this paper allows one to apply the results of \cite{kontsevich2021precalabiyau} to these categories, and compute the TQFT operations that the resulting pre-CY structure gives.

Let us recall the definitions of these objects. A smooth CY structure of dimension $d$ on $A$ is a negative cyclic chain $\omega \in CC^-_*(A)$ which satisfies a nondegeneracy condition: its image in $HH_*(A)$ induces a quasi-isomorphism $A^![d] \to A$ between the inverse dualizing bimodule $A^!$ and a shift of the diagonal bimodule $A_\Delta$. A variant of this definition first appeared in the work of Ginzburg \cite{ginzburg2006calabiyau}, without requiring the lift to negative cyclic homology; often these are referred to as `Ginzburg CY structures' or `weak smooth CY structures' in the literature. Requiring the negative cyclic lift was first proposed, by the first and third named authors of this article, back in 2013, motivated exactly by this TQFT perspective: in order to `close up inputs' with a cup, the cotrace $k \to HH_*(A)$ associated to that cup should factor through the (homotopy) fixed points of the $S^1$-action.

For more recent precise definitions of smooth CY structures in the dg and $A_\infty$-case, see \cite{brav2019relative,brav2018relative,ganatra2013symplectic}. We will need an even more explicit description; we explain a chain-level version of the smooth CY nondegeneracy condition in \cref{sec:smoothCY}. On the other side, the definition of pre-CY structure is already given `at cochain-level': a pre-CY structure of dimension $d$ on an $A_\infty$-category $(A,\mu)$ is the choice of an element
\[ m = \mu + m_{(2)} + m_{(3)} + \dots \in C^*_{[d]}(A) \]
extending the $A_\infty$-structure maps, and satisfying a Maurer-Cartan equation $[m,m] = 0$. We refer the reader to \cite[Sec.3]{kontsevich2021precalabiyau} for the definition of the space $C^*_{[d]}(A)$; let us just mention its noncommutative geometry interpretation: if the space of Hochschild chains $C^*(A)$ is seen as the space of vector fields on some noncommutative space associated to $A$, then the space $C^*_{[d]}(A)$ is the space of polyvector fields (up to some shifts depending on $d$), carrying an analogue of the Schouten-Nijenhuis bracket; a pre-CY structure can be seen as a non-strict version of a Poisson structure; the `bivector field' $m_{(2)}$ does not satisfy the involutivity condition $[m_{(2)},m_{(2)}] = 0$ on the nose, but up to a correction given by $m_{(3)}$, which itself is satisfies an involutivity condition up to a higher correction, and so on.

These relations between CY/pre-CY structures on one side, and derived symplectic/Poisson structures on the other, have been described for the case of dg categories in \cite{yeung2018precalabiyau}, using obstruction theory techniques, and in \cite{pridham2020shifted}, the duality between smooth CY and pre-CY structures is described, using techniques of derived noncommutative geometry. Our goal in this paper is to give a more explicit perspective on this relation, based on the diagrammatic calculus of \cite{kontsevich2021precalabiyau}; in \cref{sec:ncLegendre} we describe maps
\begin{equation} \label{eqn:main}
	(CC^-_d(A))_\mathrm{nondeg} \leftrightarrows (\cM_{d\mathrm{-pre-CY}})_\mathrm{nondeg} 
\end{equation}
going between smooth CY structures of dimension $d$ and pre-CY structures of the same dimension, whose `bivector field' $m_{(2)}$ is nondegenerate. It turns out that these maps are noncommmutative analogues of the Legendre transform and the inverse Legendre transform (between e.g. functions on the total space of a real vector bundle and of its dual). In order to make this analogy more understandable we start with the (mildly noncommutative) case of an odd vector bundle.

The fully noncommutative case is described in terms of certain combinations of ribbon quivers; we evaluate these by inserting the correct structure maps into the vertices, and following the prescriptions in \cite[Sec.6]{kontsevich2021precalabiyau}. By solving an iterative lifting problem, it is possible to construct linear combinations of ribbon quivers $\Gamma_{(2)},\Gamma_{(3)},\dots$, which evaluated on some smooth CY structure $\lambda$ give the pre-CY structure maps.

In general, this procedure is very complicated to implement in practice. However for some simple cases it is possible to use it and calculate pre-CY structures, even by hand. We do this for the case of a particularly simple dg category, the path category $A$ of the triangle. This is equivalent to the dg algebra $k[x^{\pm 1}]$ of chains on the based loop space of the circle. We discuss these path dg categories for general simplicial sets in \cref{sec:pathDg}; an orientation on the geometric realization of such a simplicial set gives a smooth CY structure on $A$. We then specialize to the circle, showing how to understand the chain-level nondegeneracy condition in the case of $A$. Later, in \cref{sec:exampleContinued}, we carry our the computation and calculate the full corresponding pre-CY structure in that case.

Finally, we discuss in what sense the maps \cref{eqn:main} are inverses. The usual Legendre transform defines a one-to-one correspondence between fiberwise convex functions on $E$ and on $E^\vee$; this is also true for the maps in the noncommutative case, but in a more subtle sense. The most natural statement to be made is that these maps lift to weak homotopy equivalences of simplicial sets; on the left-hand side of \cref{eqn:main} we have then a simplicial set corresponding (under the Dold-Kan equivalence) to the nondegenerate locus of a truncated negative cyclic complex, and on the right-hand side, the simplicial set of solutions to the Maurer-Cartan equation as described by \cite{hinich1996descent,getzler2009lie}.\\

\noindent \textit{Acknowledgments:} We would like to thank Damien Calaque, Sheel Ganatra, Ludmil Katzarkov, Bernhard Keller, Joost Nuiten, Manuel Rivera, Vivek Shende, Bertrand To\"en, Bruno Vallette and Zhengfang Wang for helpful conversations. We would like to thank IHES for the wonderful working conditions provided. This work was supported by the Simons Collaboration on Homological Mirror Symmetry.

\subsection*{Notation and conventions}
Throughout this paper, we fix a field $\kk$ of characteristic zero, and will denote simply by $\otimes,\Hom$ the tensor product/hom (of vector spaces, complexes, modules etc) over $\kk$.

In this paper we will assume everything is $\ZZ$-graded, but all of the results also follow for $\ZZ/2\ZZ$-grading. Given an $A_\infty$ category $A$, and an element $a \in A$ of homogeneous degree, we denote by $\deg(a)$ its degree in $A$ and by $\bar a$ its degree in $A[1]$. As for the degrees in other complexes, we will often switch between homological degree and cohomological degree; we made an effort to explicitly specify which degree we mean when there is room for confusion, and as is conventional, denote by upper indices cohomological degree and lower indices homological degree, which are related by $(-)^i = (-)_{-i}$.

We will always assume our $A_\infty$-algebras/categories are strictly unital. For ease of notation, by $C^*(A,M)$ and $C_*(A,M)$, we will always denote the \emph{reduced} Hochschild cochain/chain complexes. That is, if $A$ is an $A_\infty$-algebra, we have
\[ C^*(A,M) = \prod_{n \ge 0} \Hom(\overline{A}[1]^{\otimes n}, M), \quad C_*(A,M) = \prod_{n \ge 0} M \otimes \overline{A}[1]^{\otimes n} \]
where $\overline{A} = A/\kk.1_A$. These complexes can be obtained by taking the quotient of the usual complexes by the elements that have $1_A \in A[1]$ somewhere. Analogously, for a category we take the quotient by the strict units, see \cite{sheridan2020formulae}. We will denote by $b$ the chain differential and by $d$ the cochain differential.

Finally, we denote by $A_\Delta$ and $A^!$ the diagonal bimodule of $A$ and the inverse dualizing bimodule, respectively; for the definition of the bimodule structure maps for these objects, together with all the signs, see \cite{ganatra2013symplectic}. Our conventions agree with the signs there, with the single difference that we write the arguments in the structure maps $\mu(\dots)$ in the opposite order.

\section{Smooth Calabi-Yau structures}\label{sec:smoothCY}
An $A_\infty$-category is \emph{(homologically) smooth} if its diagonal bimodule $A_\Delta$ is a compact object; equivalently, if that object is quasi-isomorphic to a retract of a finite complex of `Yoneda bimodules' (see \cite{ganatra2013symplectic}). For example, if $A$ is a dg algebra such that $A_\Delta$ has a finite resolution by direct sums of the free bimodule $A\otimes A$, then $A$ is smooth.

When $A$ is smooth, there is a quasi-isomorphism of complexes
\[ C_*(A) \simeq \Hom_{A-A}(A^!,A_\Delta) \]
between Hochschild chains and morphisms of $A$-bimodules from the inverse dualizing bimodule to the diagonal bimodule. Recall also that $C_*(A)$ carries the action of the homology of a circle, and the homotopy fixed points of this action are calculated by the negative cyclic complex $CC^-_*(A) = (C_*(A)[[u]],b+uB)$, where $B$ is the Connes differential, of homological degree $+1$.

The following definition is a refinement of the notion of a Ginzburg CY algebra \cite{ginzburg2006calabiyau}.
\begin{definition}
	A smooth Calabi-Yau structure of dimension $d$ on $A$ is a negative cyclic chain $\lambda = \lambda_0 + \lambda_1 u + \lambda_2 u^2 + \dots \in CC^-_d(A)$ whose image $\lambda_0 \in C_d(A) \cong \Hom_{A-A}(A^!,A_\Delta[-d])$ is a quasi-isomorphism.
\end{definition}
Requiring this lift to negative cyclic homology was suggested by two of us some years ago. This notion has since appeared in many places; for a dg discussion see \cite{brav2019relative,brav2018relative} and for an $A_\infty$ discussion see \cite{ganatra2013symplectic,ganatra2019cyclic}.

\subsection{Chain-level nondegeneracy}\label{sec:chainLevelNondeg}
We now use the graphical calculus described in \cite{kontsevich2021precalabiyau} in order to formulate the nondegeneracy condition of smooth Calabi-Yau structures. Given $\lambda = \lambda_0 + \lambda_1 u + \lambda_2 u^2 + \dots \in CC^-_d(A)$ a negative cyclic $d$-chain, let us express the nondegeneracy condition on $\lambda_0$ at the chain level: under the quasi-isomorphism $C_*(A) \simeq \Hom_{A-A}(A^!,A_\Delta)$, it must map to a quasi-isomorphism of $A$-bimodules, that is, it must have a quasi-inverse
\[ \alpha = ``(\lambda_0)^{-1}" \in  \Hom_{A-A}(A_\Delta,A^!) \simeq C^*_{(2)}(A )\]
where $C^*_{(2)}(A)$ is the complex of higher Hochschild cochains (with two outputs) with the quasi-isomorphism we explained in \cite[Sec.4.2]{kontsevich2021precalabiyau}.

In terms of morphisms of bimodules, there is evidently a composition map
\[ \Hom_{A-A}(A_\Delta,A^!) \otimes \Hom_{A-A}(A^!,A_\Delta) \to \Hom_{A-A}(A_\Delta,A_\Delta) \]
which, using these quasi-isomorphisms, can be represented by a map of complexes
\[ C^*_{(2)}(A) \otimes C_*(A) \to C^*(A), \]
for which we give the following explicit representative.  
\begin{lemma}
	The composition map can be represented by the map of complexes $(\alpha,\lambda_0) \mapsto \lambda_0 \circ \alpha \in C^*(A)$ given by the ribbon quiver
	\[\begin{tikzpicture}[auto,baseline={([yshift=-.5ex]current bounding box.center)}]
	\node [vertex] (v3) at (0,0.8) {$\alpha$};
	\node [vertex] (v2) at (0,0) {$\lambda_0$};
	\node [bullet] (v4) at (0.8,0) {};
	\node [bullet] (v5) at (0,-0.8) {};
	\node (v6) at (0,-1.6) {};
	\draw [->-] (v2) to (v4);
	\draw [->-,shorten <=6pt] (0,0.8) arc (90:0:0.8);
	\draw [-w-,shorten <=6pt] (0,0.8) arc (90:270:0.8);
	\draw [->-] (0.8,0) arc (0:-90:0.8);
	\draw [->-] (v5) to (v6);
	\end{tikzpicture}\]
\end{lemma}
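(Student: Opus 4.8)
The plan is to make each of the three quasi-isomorphisms appearing in the statement explicit, using the two-sided bar resolution of $A_\Delta$, and then simply to compose. First I would recall from \cite[Sec.~4.2]{kontsevich2021precalabiyau} that one models the inverse dualizing bimodule as the $A$-bimodule dual of the reduced bar resolution, so that $A^!$ is built from $A \otimes \overline{A}[1]^{\otimes \bullet} \otimes A$ together with its evaluation pairing; under this model a bimodule map \emph{into} $A^!$ is precisely the data of an operation with two output strings, which is the identification $\Hom_{A-A}(A_\Delta, A^!) \simeq C^*_{(2)}(A)$ sending $\alpha$. Dually, a Hochschild chain $\lambda_0 = a_0 \otimes \cdots \otimes a_n$ determines the bimodule map $A^! \to A_\Delta$ obtained by capping a dual bar element against the word $a_1, \dots, a_n$ and multiplying in $a_0$; this is the identification $\Hom_{A-A}(A^!, A_\Delta) \simeq C_*(A)$. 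Finally, $\Hom_{A-A}(A_\Delta, A_\Delta) \simeq C^*(A)$ is the standard identification of Hochschild cochains with bimodule endomorphisms of the diagonal via the bar resolution.

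With these representatives fixed, I would form the composite $A_\Delta \xrightarrow{\alpha} A^! \xrightarrow{\lambda_0} A_\Delta$ and read off the corresponding element of $C^*(A)$. Feeding an input word into $\alpha$ produces two output strings and a residual $A$-factor; applying $\lambda_0$ then pairs these two strings against the chain $a_0 \otimes \cdots \otimes a_n$, inserting the $a_i$ into the appropriate gaps and contracting. Keeping only the underlying graded map of the representative, so that the bar differentials are bookkeeping that can be suppressed, the surviving data is exactly a single multilinear operation whose inputs are those absorbed by $\alpha$, interleaved between its two outputs, and whose single output is assembled from $a_0$ and the surviving factor.

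It then remains to recognize this operation as the evaluation of the displayed ribbon quiver according to the rules of \cite[Sec.~6]{kontsevich2021precalabiyau}: the vertex labelled $\alpha$ emits its two output strands along the two arcs, the vertex labelled $\lambda_0$ is the chain source, the two internal (bullet) vertices implement the multiplications and contractions produced by the composition, and the tail is the single output of the resulting cochain. The white/black decorations record which of the two strands of $\alpha$ is cyclically distinguished, matched against the cyclic ordering intrinsic to $\lambda_0$; checking that the ribbon framing of the two arcs reproduces exactly this cyclic ordering is the crux of the identification.

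The main obstacle is not the shape of the diagram but the bookkeeping. One must verify that the signs produced by composing the explicit bar-resolution representatives agree with those assigned by the evaluation rules, and---since the claim is about a map of complexes, not merely of homology---that the displayed quiver genuinely computes a chain-level representative. Because $\alpha$ and $\lambda_0$ are retained at chain level, this amounts to confirming that the acyclic orientation contributes no internal terms beyond those already accounted for, and that passing to the reduced complexes (dividing by strict units) introduces no correction terms. These checks are routine but sign-delicate, and are where I would expect to spend most of the effort.
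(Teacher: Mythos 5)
Your proposal follows essentially the same route as the paper, which simply asserts that the identification follows from the explicit bar-resolution descriptions of the three quasi-isomorphisms (citing \cite[Sec.4.2]{kontsevich2021precalabiyau} and the calculations in \cite[Sec.2]{ganatra2013symplectic}) together with the evaluation prescription for ribbon quivers; your write-up just spells out the composition and the sign bookkeeping that the paper leaves implicit. No discrepancy in approach.
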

That is, we produce a map $C^*_{(2)}(A) \otimes C_*(A) \to C^*(A)$ given by plugging in $\alpha$ into the 2-valent vertex at the top, $\lambda_0$ into the source at the center and evaluate this diagram using the prescription in \cite[Sec.6.1.4]{kontsevich2021precalabiyau}. The fact that this represents the desired map follows from using the explicit descriptions of the quasi-isomorphisms, together with the calculations in \cite[Sec.2]{ganatra2013symplectic}.

Since we assume $A$ was strictly unital, there is a distinguished element $1 \in C^0(A)$, the \emph{unit cochain}, which only has a nonzero component of length zero, giving the unit element.\footnote{In the category case, that is, when $A$ has multiple objects, recall that the regions around our diagrams get labeled with objects, so the cochain $1$ just returns the identity morphism $1_X \in \End_A(X)$ when the region around the vertex is labeled by any object $X$.} Moreover, under the quasi-isomorphism $C^*(A) \simeq \Hom_{A-A}(A_\Delta,A_\Delta)$ the unit cochain maps to the identity.
\begin{proposition}\label{prop:chainLevelSmoothCY}
	Let $(A,\mu)$ be a smooth $A_\infty$-category. The elements $\lambda_0 \in  C_d(A)$ and $\alpha \in C^d_{(2)}(A)$ represent inverse classes if and only if they are closed under the relevant differentials, and there is an element $\beta \in C^{-1}(A)$ such that
	\[ [\mu,\beta] = \quad \tikzfig{
	\node [vertex] (v3) at (0,0.8) {$\alpha$};
	\node [vertex] (v2) at (0,0) {$\lambda_0$};
	\node [bullet] (v4) at (0.8,0) {};
	\node [bullet] (v5) at (0,-0.8) {};
	\node (v6) at (0,-1.6) {};
	\draw [->-] (v2) to (v4);
	\draw [->-,shorten <=6pt] (0,0.8) arc (90:0:0.8);
	\draw [-w-,shorten <=6pt] (0,0.8) arc (90:270:0.8);
	\draw [->-] (0.8,0) arc (0:-90:0.8);
	\draw [->-] (v5) to (v6);
	} \quad - \quad 
	\tikzfig{
	\node [vertex] (v) at (0,0) {$1$};
	\draw [->-] (v) to (0,-1.5);}\]
	where $[-,-]$ is the Gerstenhaber bracket on Hochschild cochains. In other words, $\lambda = \lambda_0 + \lambda_1 u^1 + \dots$ represents a smooth Calabi-Yau structure if and only if there are $\alpha$ and $\beta$ such that the first term $\lambda_0$ satisfies the equation above.
\end{proposition}
One of those directions obviously follows from the Lemma above; this is exactly the condition $[\lambda_0 \circ \alpha] = [\id]$ in $\Hom_{A\mh A}(A,A)$, so the equation says that $\alpha$ represents a right-inverse to $\lambda_0$.

We will now argue that it is also a left-inverse, once we assume that $A$ is smooth. Before that, let us present an analogy using a finite-dimensional vector space $V$ and its linear dual $V^\vee$: let $f:V\to V^\vee$ and $g: V^\vee \to V$ be linear maps such that $g \circ f = \id_V$. Then obviously both $f$ and $g$ have full rank, are bijective and so $f\circ g = \id_{V^\vee}$.

At the risk of boring the reader, let us give another proof for this easy fact: since $V$ is finite-dimensional, the canonical map $V \to (V^\vee)^\vee$ is an isomorphism, so dualize the composition $g\circ f$ and use this identification
\[ (V \overset{f}{\rightarrow} V^\vee \overset{g}{\rightarrow} V) \mapsto  (V^\vee \overset{f^\vee}{\leftarrow} V \overset{g^\vee}{\leftarrow} V^\vee) \]
to conclude that the composition $f^\vee \circ g^\vee$ is the identity on $V^\vee$. Note now that if we pick any symmetric bilinear form on $V$, we can identify the maps $f,g$ with matrices; the maps $f^\vee,g^\vee$ are then the transposes of those matrices. Now, in finite dimensions, every matrix is conjugate to its transpose. So $g$ also has a left-inverse which is constrained to be $f$ by the equation $f\circ g\circ f = f$.

We now explain the analog of this reasoning for our smooth category $A$, first by identifying the role of the transposed map.
\begin{lemma}
	Let $A$ be a smooth category, and $\alpha \in C^*_{(2)}(A) \cong \Hom_{A\mh A}(A^!,A)$. Let $\alpha^! \in \Hom_{A\mh A}(A^!,(A^!)^!)$ be the morphism obtained by taking bimodule duality. Upon identifying $(A^!)^! \cong A$, the class of $\alpha^!$ is represented by the $\ZZ/2$ rotation of the vertex of $\alpha$.
\end{lemma}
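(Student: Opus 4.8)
The plan is to reduce the statement to an explicit computation in bar-type models, where both the duality functor $(-)^!$ and the reflexivity identification $(A^!)^!\cong A$ are given by concrete formulas, and then to recognize the composite as the $\ZZ/2$ rotation of the vertex. Since the Lemma only asserts equality of the \emph{classes} of $\alpha^!$ and of the rotated vertex, I have the freedom to work up to bimodule homotopy, which will absorb most of the sign and convention ambiguities.

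First I would fix resolutions. Presenting $A_\Delta$ by its bar resolution $A\otimes \overline{A}[1]^{\otimes\bullet}\otimes A$, the inverse dualizing bimodule $A^! = \RHom_{A\mh A}(A_\Delta, A\otimes A)$ is computed by the associated cobar-type complex, and the quasi-isomorphism $C^*_{(2)}(A)\cong \Hom_{A\mh A}(A^!,A)$ of \cite[Sec.4.2]{kontsevich2021precalabiyau} displays $\alpha$ as a vertex whose two output legs cut its cyclic word of inputs into two arcs. In these models the contravariant functor $(-)^!$, applied to $\alpha\colon A^!\to A_\Delta$ and using the identity $(A_\Delta)^!=A^!$, has an explicit description: it reverses the reading order of the tensor word and interchanges the two $A\otimes A$ output slots. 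This produces $\alpha^!\colon A^!\to (A^!)^!$ at the chain level.

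Next I would write down the canonical reflexivity map $\eta\colon A_\Delta\to (A^!)^!$, which is an isomorphism precisely because $A$ is smooth, so that $A_\Delta$ is perfect and hence dualizable; concretely $\eta$ is induced by the same evaluation pairing that defines $A^!$. The essential compatibility to check is that $\eta$ is exactly the identification implicit in the diagrammatic evaluation rules of \cite[Sec.6]{kontsevich2021precalabiyau}, so that inverting it introduces no spurious automorphism. Composing, I would simplify $\eta^{-1}\circ\alpha^!\in \Hom_{A\mh A}(A^!,A_\Delta)$ and match it, term by term, with the vertex obtained by rotating $\alpha$ by a half-turn: the output-slot interchange becomes the swap of the two output legs, and the reorganization of the tensor word becomes the induced reshuffling of the two input arcs, which together are exactly the $\ZZ/2$ rotation.

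The main obstacle I anticipate is not the combinatorics but the interplay of signs, shifts, and orientation. Each application of $(-)^!$ reverses the cyclic reading direction, an orientation reversal, whereas a half-turn preserves it; the claim that the net composite $\eta^{-1}\circ\alpha^!$ is a rotation rather than a reflection, and that it carries no residual Koszul sign, rests on the degree shift built into $A^!$ and on the shift conventions for $C^*_{(2)}(A)$ conspiring correctly. I would handle this by first verifying that the chain-level composite is a cycle representing a bimodule morphism, and then pinning down the remaining normalization by a naturality argument: both $\eta^{-1}\circ\alpha^!$ and the rotated vertex are natural in $\alpha$ and induce the same map on homology, hence represent the same class. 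Should a direct chain-homotopy be preferred instead, it can be extracted from the contracting homotopy of the bar resolution used to present $A^!$.
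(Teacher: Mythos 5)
Your outline follows essentially the same route as the paper's proof, merely translated from the paper's strip-diagram calculus into explicit bar/cobar language: you model $A^!$ concretely, observe that $(-)^!$ reverses the reading of the tensor word and swaps the two output slots, invoke smoothness to identify $(A^!)^!$ with $A_\Delta$ via the reflexivity map, and then recognize the composite as the half-turn of the vertex. The paper does exactly this, representing $A^!$ and duals of perfect bimodules by strips with reversed inner arrows, so that ``reverse the diagram and insert it in place of the $M$ arrow'' followed by the simplification permitted by $A\xrightarrow{\sim}(A^!)^!$ visibly produces the vertex with the white (first-output) marker moved to the opposite leg. The one step I would push back on is your proposed resolution of the sign/normalization ambiguity: the argument ``both maps are natural in $\alpha$ and induce the same map on homology, hence represent the same class'' is circular, since inducing the same map on homology is precisely the content of the Lemma, and naturality alone cannot exclude a discrepancy by a sign (or any natural automorphism), as $-\mathrm{id}$ is equally natural. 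That normalization has to come from a direct term-by-term comparison in your chosen model --- which is what the paper's diagrammatic conventions accomplish, and what your alternative suggestion of extracting an explicit chain homotopy from the contracting homotopy of the bar resolution would also accomplish --- so I would drop the naturality argument and commit to one of those two concrete checks.
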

\begin{proof}
	This follows from some diagrammatic calculus as we developed in \cite{kontsevich2021precalabiyau}. Recall that for any 
	$A_\infty$-category $A$ we have a quasi-isomorphism $A_\Delta \otimes_{A\mh A} A_\Delta \overset{\sim}{\rightarrow} A_\Delta$; we regard an element of the former as a set of $A[1]$-arrows, traveling down a strip with an $A_\Delta$ arrow on each side:
	\[\tikzfig{
		\draw [->-] (0,1.5) to (0,0);
		\draw [->-] (1,1.5) to (1,0);
		\draw [->-] (0.25,1.3) to (0.25,0.2);
		\draw [->-] (0.5,1.3) to (0.5,0.2);
		\draw [->-] (0.75,1.3) to (0.75,0.2);
		\node at (0,1.8) {$A_\Delta$};
		\node at (1,1.8) {$A_\Delta$};
		\draw [decorate,decoration={brace,mirror,amplitude = 2mm}] (0.1,0.1) -- (0.9,0.1) node [midway,yshift=-0.8cm] {$T(A[1])$};
	}\]
	More precisely, an element of $A_\Delta \otimes_{A\mh A} A_\Delta \overset{\sim}{\rightarrow} A_\Delta$ is something we can input into the top of this diagram.
	
	Analogously, we represent an element of $A^!$ as a strip where the inner arrows travel \emph{up the strip}:
	\[\tikzfig{
		\draw [->-] (0,1.5) to (0,0);
		\draw [->-] (1,1.5) to (1,0);
		\draw [->-] (0.25,0.2) to (0.25,1.3);
		\draw [->-] (0.5,0.2) to (0.5,1.3);
		\draw [->-] (0.75,0.2) to (0.75,1.3);
		\node at (0,1.8) {$A_\Delta$};
		\node at (1,1.8) {$A_\Delta$};
		\draw [decorate,decoration={brace,mirror,amplitude = 2mm}] (0.1,0.1) -- (0.9,0.1) node [midway,yshift=-0.8cm] {$T(A[1])$};
	}\]
	Again, more precisely an element of $A^!$ is something we can input into the top of this diagram. This can be seen from the explicit representative for the left dual $A^!$ given in \cite[Def.2.40]{ganatra2013symplectic}. More generally, for any perfect $(A,A)$-bimodule $M$, its left dual is represented by the strip
	\[\tikzfig{
		\draw [->-] (0,1.5) to (0,0);
		\draw [->-] (1,0) to (1,1.5);
		\draw [->-] (0.25,0.2) to (0.25,1.3);
		\draw [->-] (0.5,0.2) to (0.5,1.3);
		\draw [->-] (0.75,0.2) to (0.75,1.3);
		\draw [->-] (2,1.5) to (2,0);
		\draw [->-] (1.25,0.2) to (1.25,1.3);
		\draw [->-] (1.5,0.2) to (1.5,1.3);
		\draw [->-] (1.75,0.2) to (1.75,1.3);
		\node at (0,1.8) {$A_\Delta$};
		\node at (1,1.8) {$M$};
		\node at (2,1.8) {$A_\Delta$};
		\draw [decorate,decoration={brace,mirror,amplitude = 2mm}] (0.1,0.1) -- (0.9,0.1) node [midway,yshift=-0.8cm,xshift=-0.2cm] {$T(A[1])$};
		\draw [decorate,decoration={brace,mirror,amplitude = 2mm}] (1.1,0.1) -- (1.9,0.1) node [midway,yshift=-0.8cm,xshift=0.2cm] {$T(A[1])$};
	}\]
	with an $M$ arrow going up in the middle.
	
	Under these identifications, given an element $\alpha \in C^*_{(2)}(A)$, the map $A^!\to A_\Delta$ it gives by dualizing is represented by the diagram
	\[\tikzfig{
		\node at (0,1.8) {$A_\Delta$};
		\node at (1.4,1.8) {$A_\Delta$};
		\node [vertex] (center) at (0.7,0.75) {$\alpha$};
		\draw [->-] (0,1.5) to (0,0.5);
		\draw [->-] (1.4,1.5) to (1.4,0.5);
		\draw [->-] (0,0.5) to (0,0);
		\draw [->-] (1.4,0.5) to (1.4,0);
		\draw [-w-] (center) to (1.4,0.75);
		\draw [->-] (center) to (0,0.75);
	}\]
	where as usual the white arrow marks the first output of $\alpha$.
	
	The map induced on the left duals $\alpha^!:(A^!)^! \to A^!$ is then given by reversing this diagram and inserting it in the place of the $M$ arrow. But since $A$ is smooth, $A^!$ is perfect and the map $A \to (A^!)^!$ is a quasi-isomorphism. So we can simplify the diagram obtained and conclude that the diagram
	\[\tikzfig{
		\node at (0,1.8) {$A_\Delta$};
		\node at (1.4,1.8) {$A_\Delta$};
		\node [vertex] (center) at (0.7,0.75) {$\alpha$};
		\draw [->-] (0,1.5) to (0,0.5);
		\draw [->-] (1.4,1.5) to (1.4,0.5);
		\draw [->-] (0,0.5) to (0,0);
		\draw [->-] (1.4,0.5) to (1.4,0);
		\draw [->-] (center) to (1.4,0.75);
		\draw [-w-] (center) to (0,0.75);
	}\]
	is also a representative for $\alpha^!$; comparing it to the previous diagram we have the desired statement.
\end{proof}

We are now ready to prove \cref{prop:chainLevelSmoothCY}.
\begin{proof}
	It remains to prove that if $\alpha$ and $\lambda_0$ satisfy the given equation, then $[\alpha]\circ[\lambda_0] = [\id_{A^!}]$. We use the analogy with the discussion about finite-dimensional vector spaces above: we already know that the composition
	\[ A_\Delta \overset{\alpha}{\rightarrow} A^! \overset{\lambda_0}{\rightarrow} A_\Delta \]
	is quasi-isomorphic to the identity on $A_\Delta$, so taking left duals we know that the composition
	\[ A^! \overset{\alpha^!}{\leftarrow} (A^!)^! \overset{(\lambda_0)^!}{\leftarrow} A^! \]
	is quasi-isomorphic to the identity on $A^!$. Thus $[\alpha^!]$ has a right-inverse.
	
	Consider now the element of $C^*_{(2)}(A)$ given by the diagram
	\[\tikzfig{
		\node [vertex] (x) at (0,0) {$\lambda_0$};
		\node at (2,0.3) {};
		\node at (-2,0.3) {};
		\node [vertex] (n) at (0,1) {$\alpha$};
		\node [vertex] (s) at (0,-1) {$\alpha$};
		\node [bullet] (ne) at (0.7,0.7) {};
		\node [bullet] (e) at (1,0) {};
		\node [bullet] (w) at (-1,0) {};
		\draw [->-] (x) to (ne);
		\draw [->-=1] (w) to (-2,0);
		\draw [->-=1] (e) to (2,0);
		\draw [-w-,shorten <=6pt] (0,1) arc (90:45:1);
		\draw [->-] (0.7,0.7) arc (45:0:1);
		\draw [->-,shorten <=6pt] (0,1) arc (90:180:1);
		\draw [-w-,shorten <=6pt] (0,-1) arc (-90:-180:1);
		\draw [->-,shorten <=6pt] (0,-1) arc (-90:0:1);
	}\]
	
	Using the diagrammatic calculus for ribbon quivers, when $\alpha$ and $\lambda_0$ are closed this element is cohomologous to the element given by
	\[ \tikzfig{
		\node [vertex] (v3) at (0,0.8) {$\alpha$};
		\node [vertex] (v2) at (0,0) {$\lambda_0$};
		\node [bullet] (v4) at (0.8,0) {};
		\node [bullet] (v5) at (0,-0.8) {};
		\node [bullet] (v6) at (0,-1.5) {};
		\node [vertex] (v1) at (-1,-1.5) {$\alpha$};
		\draw [->-] (v2) to (v4);
		\draw [-w-,shorten <=6pt] (0,0.8) arc (90:0:0.8);
		\draw [->-,shorten <=6pt] (0,0.8) arc (90:270:0.8);
		\draw [->-] (0.8,0) arc (0:-90:0.8);
		\draw [->-] (v5) to (v6);
		\draw [-w-] (v1) to (-2.2,-1.5);
		\draw [->-] (v1) to (v6);
		\draw [->-] (v6) to (+1.1,-1.5);
	}\]
	and therefore by the assumption that $\alpha$ and $\lambda_0$ satisfy the equation in the statement of the proposition, this element is also cohomologous to $\tikzfig{
		\node [vertex] (v1) at (0,0) {$\alpha$};
		\draw [-w-] (v1) to (-1,0);
		\draw [->-] (v1) to (1,0);
	}$, which we showed above represents the left dual map $\alpha^!$. A similar calculation shows that it is also cohomologous to $\alpha$, that is, $\tikzfig{
	\node [vertex] (v1) at (0,0) {$\alpha$};
	\draw [->-] (v1) to (-1,0);
	\draw [-w-] (v1) to (1,0);
}$. Therefore $[\alpha]$ also has a right-inverse, which is then constrained to be the same class as $[\lambda_0]$.
\end{proof}

\subsection{Examples}
So far, we have explained that given a smooth CY structure $\lambda$ on an $A_\infty$-category $A$, one can in principle find a chain-level representative, that is, a solution $\alpha$ to the equation in \cref{prop:chainLevelSmoothCY}, which we will later use to construct the desired pre-CY structure on $A$.

In general, finding an explicit solution for $\alpha$ may be difficult, since it involves solving an inverse function problem for a morphism between bimodules. However, in some specific cases of interest, it is possible to solve this problem explicitly.

\subsubsection{Chains on the based loop space of orientable manifolds}
Consider a pointed path-connected topological space $(X,x)$; concatenation of loops gives a morphism
\[ \Omega_x X \times \Omega_x X \to \Omega_x X\]
inducing a product on the complex of chains $C_*(\Omega_{x}X,\kk)$, making it into a dg algebra.

It has been long understood that structures on this algebra are intimately related to operations of string topology. In the 80s, work of Goodwillie \cite{goodwillie1985cyclic} and Burghelea and Fiedorowicz \cite{burghelea1986cyclic} gave an equivalence
\[ H_*(LX,\kk) \cong HH_*(C_*(\Omega_x X),\kk) \]
between the homology of the free loop space and the Hochschild homology of the dg algebra $A = C_*(\Omega_x X,\kk)$. This equivalence relate certain BV algebra structures on each side, constructed algebraically on the Hochschild complex and topologically on loop space homology.

Much has been written about this relation between Hochschild theory of $A$ and loop space operations, but here we would like to focus on the effect of Poincaré duality. Let $X$ be a $n$-dimensional ($\kk$-)Poincaré duality space with a choice of fundamental chain, that is, endowed with a $n$-chain $c_X \in H_n(X,\kk)$ such that the cap product $c_X \cap: H^*(X,\kk) \to H_{n-*}(X,\kk)$ is an isomorphism.

By now, it is a well-known fact that in that case $A = C_*(\Omega_x X,\kk)$ is a smooth CY algebra of dimension $n$. At the level of weak CY structures (that is, without the lift to negative cyclic homology), this is explicitly proven in \cite{abbaspour2013algebraic} following an argument of \cite{felix1995differential}, and the lift to negative cyclic complex is discussed in a draft \cite{cohencalabi} of Cohen-Ganatra. We summarize these results:
\begin{proposition}
	There is a map $\iota: C_*(X,\kk) \to CC^-_*(A)$ such that if $c_X \in C_d(X,\kk)$ such that $[c_X] \in H_d(X,\kk)$ is the fundamental class of $X$ then the image $\iota(c_X)$ is a smooth CY structure of dimension $d$ on $A = C_*(\Omega_x X,\kk)$.
\end{proposition}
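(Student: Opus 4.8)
The plan is to build $\iota$ from the inclusion of constant loops into the free loop space and then to reduce nondegeneracy to Poincaré duality on $X$. First I would recall the chain-level form of the Goodwillie--Burghelea--Fiedorowicz equivalence \cite{goodwillie1985cyclic,burghelea1986cyclic}: for a connected pointed space the rotation $S^1$-action on the free loop space $LX$ is intertwined, under $HH_*(A) \cong H_*(LX)$, with the Connes operator $B$ on $C_*(A)$, so that passing to homotopy fixed points yields a quasi-isomorphism $CC^-_*(A) \simeq C_*(LX)^{hS^1}$. The inclusion of constant loops $X \hookrightarrow LX$ lands in the fixed locus of the rotation action, hence is $S^1$-equivariant when $X$ carries the trivial action. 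Taking homotopy fixed points gives a map $C_*(X)^{hS^1} \to C_*(LX)^{hS^1} \simeq CC^-_*(A)$; since the action on $X$ is trivial, $C_*(X)^{hS^1} \simeq C_*(X)[[u]]$ with $u$ in homological degree $-2$, and I define $\iota$ by precomposing with the inclusion $C_*(X) \hookrightarrow C_*(X)[[u]]$ as the $u^0$-term. By construction this produces a genuine negative cyclic chain, so the required lift past $HH_*$ is automatic and $\iota(c_X)_0$ is exactly the image of $c_X$ under the constant-loop map $C_*(X) \to C_*(LX) \simeq C_*(A)$.

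Second, I would verify that $\lambda := \iota(c_X)$ is nondegenerate in the sense of the definition of smooth CY structure, i.e. that under $C_*(A) \simeq \Hom_{A\mh A}(A^!,A_\Delta)$ the term $\lambda_0$ maps to a quasi-isomorphism $A^![d]\to A_\Delta$. The key input is an explicit model for the inverse dualizing bimodule $A^!$ of $A = C_*(\Omega_x X)$ together with the identification of the bimodule map induced by $\lambda_0$ with the cap product $c_X \cap -$. Granting this, nondegeneracy is exactly Poincaré duality: $c_X \cap -$ being an isomorphism on (co)homology translates, via the equivalences above, into the map $A^![d] \to A_\Delta$ being a quasi-isomorphism. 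This nondegeneracy at the level of weak CY structures is what is established in \cite{felix1995differential,abbaspour2013algebraic}, and the compatibility of the lift $\iota$ with the negative cyclic structure is the content of \cite{cohencalabi}; the proof here is the assembly of these two statements into the single claim.

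The main obstacle is chain-level control of the equivariant identification for \emph{non-simply connected} $X$, and it has two parts. First, the Goodwillie--Burghelea--Fiedorowicz equivalence must be upgraded to an $S^1$-equivariant (homotopy-coherent) quasi-isomorphism, so that the topological rotation action is matched with $b+uB$ and the constant-loop inclusion is compatible with both; this coherence is precisely what guarantees that $\iota$ lands in $CC^-_*(A)$ rather than merely lifting to $HH_*(A)$. Second, the explicit identification of $A^!$ and of the Poincaré duality map is delicate when $\pi_1(X)$ acts nontrivially, since the bimodule structure on $A^!$ records this monodromy; one must check that the fundamental chain still induces a quasi-isomorphism of $(A,A)$-bimodules, which at the topological level amounts to Poincaré duality with twisted local coefficients on $X$ rather than the simply connected statement. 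Both issues are handled in the cited references, so I would not reprove them but instead organize them around the single construction above: define $\iota$ equivariantly via constant loops, and invoke nondegeneracy from Poincaré duality.
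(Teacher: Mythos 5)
Your proposal is correct and follows essentially the same route as the paper, which states this proposition explicitly as a summary of results from \cite{felix1995differential,abbaspour2013algebraic} (nondegeneracy of the constant-loop image of the fundamental chain via Poincar\'e duality, at the level of weak CY structures) and \cite{cohencalabi} (the lift to the negative cyclic complex), offering no independent proof. Your additional remarks on the $S^1$-equivariance of the constant-loop inclusion and on Poincar\'e duality with local coefficients for non-simply-connected $X$ correctly identify where the real work in those references lies.
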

The map $\iota$ (or rather, its composition with the canonical map $CC^-_*(A)\to C_*(A)$ to Hochschild chains) should be seen as an algebraic incarnation of the map $X \hookrightarrow LX$ given by inclusion of $X$ as constant loops. 

We would like to have a chain-level inverse for the image of $c_X$, that is, a appropriate value for the $\alpha$ vertex in the statement of \cref{prop:chainLevelSmoothCY}; by the result above it is always possible to find one, but doing so algorithmically for a general Poincar\'e duality space turns out to be quite involved. We will leave the full description for future work, but at least we can present the general lines and a simple example.

\subsubsection{Path dg category}\label{sec:pathDg}
We start by replacing the algebra $A$ by an equivalent dg category, with a more local, combinatorial description. Let $\Lambda$ be a simplicial complex endowed with a total ordering on its set $\Lambda_0$ of vertices; this defines a corresponding simplicial set. We will denote an $n$-simplex $\sigma$ in $\Lambda$ with vertices $v_0,\dots,v_n$ (in order) by the notation $(v_0\dots v_n)_\sigma$, or more simply $(v_0\dots v_n)$ when there is no ambiguity.

\begin{definition}
	The path dg category $P_\Lambda$ of the simplicial complex $\Lambda$ has as object set $\Lambda_0$ and as morphism space between vertices $s$ and $t$, the graded $\kk$-vector space spanned by symbols of the form
	\[ (v^1_0 v^1_1 \dots v^1_{n_1})_{\sigma_1} * (v^2_0 \dots v^2_{n_2})_{\sigma_2} * \dots * (v^j_1 v^j_0)_{\sigma_i}^{-1} \dots (v^N_0 \dots v^N_{n_N})_{\sigma_N}, \]
	where $v^k_{n_j} = v^{k+1}_0$, $v^1_0 = s$ and $v^N_{n_N} = t$, modulo the relation generated by
	\[ x * (u_0 u_1)_\sigma * (u_0 u_1)_\sigma^{-1} * y = x * y \]
	where $x$ and $y$ are any sequences as above. A generator of the form above is placed in degree $\sum_k (1-n_k)$. If $u=v$ we also add the identity morphism $e_u$.
	
	That is, generators are composable sequences whose elements are either simplices of $\Lambda$ (of any nonzero dimension), or formal inverses of 1-simplices. We place $n$-simplices in degree $n-1$, and endow this vector space with the differential given by
	\[ d(v_0 \dots v_n) = \sum_{i=1}^{n-1} (-1)^i \left( (v_0\dots \hat{v_i} \dots v_n) - (v_0\dots v_i)*(v_i \dots v_n) \right) \]
	and by the Leibniz rule with respect to $*$.
\end{definition}

These compositions of simplices are sometimes referred to as `necklaces' in the literature (not to be confused with the `necklace bracket' we defined in \cite{kontsevich2021precalabiyau}), as one can imagine the simplices as beads in an unfastened necklace. It was suggested by one of us in \cite{kontsevich2009symplectic} that these categories of necklaces (after formally inverting 1-simplices as we did above) give models for based loop spaces. This relation was studied in \cite{dugger2011mapping,dugger2011rigidification,rivera2018cubical,rivera2019adams,hess2010loop}; in particular, \cite{rivera2019combinatorial} describes in detail the localization at 1-simplices to give a functor $\hat{\mathfrak{C}}$ from simplicial sets into some category of `necklical sets', making this assignment functorial.

Our dg category $P_\Lambda$ is just the image on objects of this functor, composed with taking $C_*(-,\kk)$. We will not need the full simplicial description developed in the references above, so we summarize:
\begin{proposition}
	If $(X,x)$ is a pointed connected topological space such that $X$ is homotopy equivalent to the geometric realization of $\Lambda$, the dg category $P_\Lambda$ is equivalent to the dg algebra $C_*(\Omega_x X)$, seen as a dg category with a single object, in the sense that there is a zig-zag of essentially surjective dg functors, all inducing quasi-isomorphisms on morphism complexes.
\end{proposition}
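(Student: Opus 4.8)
The plan is to deduce the statement from the known comparison between the localized cobar construction of a simplicial chain coalgebra and the chains on the based loop space, after first reducing the multi-object category $P_\Lambda$ to a single-object dg algebra. First I would observe that $P_\Lambda$ is, by construction, a dg category, and that the essential-surjectivity part of the claim is cheap: since $X$ is connected, $\Lambda$ is connected, so any vertex $t$ is joined to the basepoint vertex $x$ by a string of $1$-simplices, each of which becomes an isomorphism in $P_\Lambda$ precisely because we have formally adjoined the inverses $(u_0 u_1)^{-1}$ subject to the relation $x*(u_0u_1)*(u_0u_1)^{-1}*y = x*y$. Hence every object of $P_\Lambda$ is isomorphic to $x$, and the inclusion of the full dg subcategory on the single object $x$ is a quasi-equivalence. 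This supplies the first functor in the zig-zag and reduces the problem to identifying the endomorphism dg algebra $\End_{P_\Lambda}(x)$ with $C_*(\Omega_x X)$.

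Next I would recognize $\End_{P_\Lambda}(x)$ as a localized cobar construction. The morphism complex of $P_\Lambda$ is spanned by necklaces — composable strings of simplices and formal inverses of $1$-simplices — with a differential that both takes internal faces and splits each simplex into two-fold decompositions; this is exactly the extended cobar construction $\hat{\Omega}$ applied to the simplicial chain coalgebra $C_*(\Lambda,\kk)$ in the sense of \cite{rivera2018cubical,rivera2019combinatorial}, with the formal inversion of $1$-simplices implementing the localization that is needed to treat non-simply-connected spaces, and the ordering of vertices fixing the signs. I would make this identification precise by matching generators and differentials: the term $(v_0\dots\hat{v_i}\dots v_n)$ corresponds to the internal coface in the cobar differential, the term $(v_0\dots v_i)*(v_i\dots v_n)$ to the coproduct term, and the degree convention $\sum_k(1-n_k)$ to the cobar desuspension.

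Finally I would invoke the Adams-type theorem in its non-simply-connected refinement: for any connected simplicial set the localized cobar construction of its chain coalgebra is quasi-isomorphic, as a dg algebra, to the chains on the based loop space, via an explicit zig-zag passing through cubical or Moore-path models of $\Omega_x X$ \cite{rivera2018cubical,rivera2019adams}. Combining this with homotopy invariance — since $X\simeq|\Lambda|$ we have $\Omega_x X \simeq \Omega_x|\Lambda|$ and hence $C_*(\Omega_x X)\simeq C_*(\Omega_x|\Lambda|)$ — yields the required zig-zag of quasi-isomorphisms; every functor in it is essentially surjective because, after localization, both sides have a single object up to isomorphism.

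The main obstacle is the non-simply-connected case. For simply connected $\Lambda$ the classical cobar equivalence of Adams already suffices, with no inversion of $1$-simplices; in general one genuinely needs the localized cobar together with the verification that formally inverting $1$-simplices does not alter the quasi-isomorphism type while correctly modeling $\pi_1$. This is precisely the input that must be imported from the cited work on necklical rigidification and the cobar construction; the remaining steps — essential surjectivity and the generator-by-generator matching of differentials — are routine bookkeeping.
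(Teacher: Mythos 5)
Your proposal is correct and follows essentially the same route as the paper, which states this proposition without proof as a summary of the cited literature on necklical models and the localized cobar construction (Rivera et al., Dugger--Spivak, Hess--Tonks); your reduction to a single object plus the identification of the endomorphism algebra with the localized cobar construction and the appeal to the non-simply-connected Adams theorem is exactly the intended assembly of those references. The only point worth flagging is that the cobar construction of $C_*(\Lambda,\kk)$ for a many-vertex $\Lambda$ is naturally the many-object category $P_\Lambda$ itself rather than a single-object algebra, so the order of operations (restrict to one object first, then compare) should be kept as you have it.
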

Concretely, each morphism $x\to y$ of degree $-n$ describes an $n$-dimensional \emph{cube} in the space of paths between $x$ and $y$; for example, the morphism given by $(v_0 v_1) * (v_1 v_2 v_3)$, which has boundary $-(v_0 v_1) * (v_1 v_3) + (v_0 v_1)*(v_1 v_2)*(v_2 v_3)$, describes the family of paths over the 1-cube (that is, the interval) which sweeps from the path $v_0 \to v_1 \to v_3$ to the path $v_0 \to v_1 \to v_2 \to v_3$, deforming it across the 2-simplex $(v_1 v_2 v_3)$. The comparison result with the algebra $C_*(\Omega_x X)$ above relies on the fact that this cubical complex computes the homology of the based loop space.

\subsubsection{Inclusion of constant loops}
One can use this model to describe the map $\iota$, which corresponds to the inclusion of constant loops into the free loop space. An explicit representative for a map
\[ C_*(X) \to CC^-_*(C_*(\Omega_x X)) \]
from simplicial chains on $X$ is described in \cite[App.B]{abouzaid2011cotangent}, following constructions in the classical work of Adams \cite{adams1956cobar}. We can rephrase this in terms of the dg category $P_\Lambda$:

\begin{proposition}
	There is a chain map $\iota: C_*(\Lambda) \to CC^-_*(P_\Lambda)$, whose composition with the canonical map $CC^-_*(P_\Lambda) \to C_*(P_\Lambda)$ agrees with the the map induced by the inclusion of constant loops, under the identification $HH_*(P_\Lambda) \cong H_*(L(|\Lambda|)$.
\end{proposition}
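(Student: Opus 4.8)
The plan is to produce $\iota$ explicitly by adapting the Adams cobar construction \cite{adams1956cobar} and its negative cyclic refinement from \cite[App.~B]{abouzaid2011cotangent} to the combinatorial model $P_\Lambda$, and then to match its leading term against the topological inclusion of constant loops using the already-established isomorphism $HH_*(P_\Lambda)\cong H_*(L|\Lambda|)$. First I would write down the leading Hochschild component $\iota_0\colon C_*(\Lambda)\to C_*(P_\Lambda)$: on a simplex $\sigma=(v_0\dots v_n)$ it is the ``cyclic cobar'' chain obtained by applying the iterated Alexander--Whitney coproduct and closing the result up inside the cyclic bar complex of $P_\Lambda$. Concretely, on a vertex $v$ it returns the length-zero chain $e_v$, and in general it is a signed sum over the ways of cutting $\sigma$ at its vertices into an ordered tuple of faces, placed as tensor factors around a cyclic word. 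The point of this formula is that $e_v$ represents the constant loop at $v$ while the higher faces sweep the constant loop across $\sigma$, so that $\iota_0$ models the family of constant loops. That $b\,\iota_0=\iota_0\,\partial$ is exactly the compatibility of the cobar diagonal with the simplicial and Hochschild differentials; this is the classical Adams computation, and in the necklace model it reduces to a finite check using the differential on $P_\Lambda$ from \cref{sec:pathDg}.

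Next I would build the negative cyclic lift $\iota=\iota_0+\iota_1 u+\iota_2 u^2+\dots$. The condition that $\iota$ be a chain map into $(C_*(P_\Lambda)[[u]],b+uB)$ unwinds into the tower
\[ b\,\iota_0=\iota_0\,\partial,\qquad B\,\iota_k+b\,\iota_{k+1}=\iota_{k+1}\,\partial\quad(k\ge 0), \]
so after the leading term one must solve, order by order, for chains $\iota_{k+1}$ witnessing that $B\,\iota_k$ is null-homotopic relative to the simplicial differential. Geometrically this is forced: a constant loop is genuinely fixed by the rotation $S^1$-action on $LX$, so the Connes operator $B$ must annihilate the image of $\iota_0$ up to coherent homotopy, and the $\iota_k$ are precisely the data of that coherence. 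I would produce the $\iota_k$ by fixing an explicit contracting homotopy for the $S^1$-action on the constant-loop locus in the cyclic cobar model --- the extra degeneracy that trivializes $B$ on constant loops --- and setting $\iota_{k+1}$ to be its composite with $B\,\iota_k$, so that the tower closes by construction.

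The remaining point is the identification in the statement. Here I would invoke the isomorphism $HH_*(P_\Lambda)\cong H_*(L|\Lambda|)$ coming from the comparison results of \cite{goodwillie1985cyclic,burghelea1986cyclic} together with the necklace model of \cite{rivera2019combinatorial}, under which the cyclic cobar chain $\iota_0(\sigma)$ is identified with the geometric family of constant loops over $\sigma$. Matching the two maps is then a naturality statement, checked on the generating simplices by comparing $\iota_0$ with the known cobar-to-loops equivalence; since the composition $CC^-_*(P_\Lambda)\to C_*(P_\Lambda)$ is exactly setting $u=0$, it recovers $\iota_0$ and hence the inclusion of constant loops.

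The main obstacle I anticipate is the second step: constructing the higher terms $\iota_k$ and verifying the infinite tower of coherence relations with correct signs. Everything else is either a finite combinatorial check (that $\iota_0$ is a chain map) or an appeal to an already-cited equivalence (the final identification); but exhibiting an explicit, sign-consistent contracting homotopy for the $S^1$-action on constant loops in the necklace model, and checking that it solves $B\,\iota_k+b\,\iota_{k+1}=\iota_{k+1}\,\partial$ to all orders, is the genuinely technical part. Conceptually the lift is guaranteed --- constant loops are honest $S^1$-fixed points, so the map into the homotopy fixed points that $CC^-_*$ computes must exist --- but turning that guarantee into the explicit formula demanded by the statement is where the work lies.
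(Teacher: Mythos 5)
Your construction of the leading term $\iota_0$ via the Adams cobar diagonal is consistent with what the paper does (it cites \cite[App.B]{abouzaid2011cotangent} for exactly this), and your reduction of the chain-map condition to the tower $b\,\iota_0=\iota_0\partial$, $B\,\iota_k+b\,\iota_{k+1}=\iota_{k+1}\partial$ is correct. But your route to solving that tower --- a global induction on the power of $u$ --- has a genuine gap at precisely the step you flag as "where the work lies." The obstruction to finding $\iota_{k+1}$ is the class of $B\,\iota_k$ in the homology of the Hom-complex $\Hom(C_*(\Lambda),C_*(P_\Lambda))$, and this group does not vanish in general: $HH_*(P_\Lambda)\cong H_*(L|\Lambda|)$ is large, so there is no degree or formality reason for the obstruction to die. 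The heuristic that "constant loops are honest $S^1$-fixed points, so the lift is guaranteed" cannot be invoked here without already having a chain-level comparison between the topological $S^1$-action on $LX$ and the mixed-complex structure $(b,B)$ on $C_*(P_\Lambda)$ --- which is essentially the content one is trying to establish. So either you must actually exhibit the sign-consistent extra degeneracy trivializing $B$ on the image of the cyclic cobar construction (which you do not), or the argument is incomplete.

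The paper's proof avoids this entirely by a different organization of the induction: it inducts on the \emph{dimension of the simplex}, not on the power of $u$, and builds in a locality constraint --- the image of each simplex $\tau$ is required to lie in the subcomplex $CC^-_*(P_{\overline{\tau}})$ supported on the closed star of $\tau$. With that constraint, the obstruction to extending over an $n$-simplex $\sigma$ is the class $[\iota(\del\sigma)]\in HC^-_{n-1}(P_{\overline{\sigma}})$, and since $\overline{\sigma}$ is contractible this group is (essentially) $HC^-_*(\kk)$, which kills the obstruction for $n\ge 1$. This trades your explicit-but-unfinished contracting homotopy for a soft obstruction-theoretic vanishing, at the cost of losing explicit formulas --- a cost the paper acknowledges immediately after the proof. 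If you want to salvage your approach, the fix is to import the paper's locality idea: run your $u$-tower simplex by simplex inside $CC^-_*(P_{\overline{\sigma}})$, where contractibility does the work that your global geometric heuristic cannot.
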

\begin{proof}
	One can construct this map locally on each simplex, and inductively in dimension. We start by sending each 0-simplex $(v) \mapsto e_v$ (that is, the length 1 zero-chain in $CC^-_*(P_\Lambda)$ consisting solely of the identity morphism $e_v \in P_\Lambda(v,v)$).
	
	Suppose now that we have the map for all simplices of dimension up to $n-1$, and moreover, that for each such simplex $\tau$, the image lies in the subcomplex $CC^-_*(P_{\overline{\tau}}) \subseteq CC^-_*(P_\Lambda)$ on its closure $\overline{\tau}$. Let $\sigma$ be some $n$-simplex; in order to extend the map to $\sigma$ it is sufficient to find some degree $n$ element $x$ of $CC^-_*(P_{\overline{\sigma}})$ such that
	\[ (b+uB)x = \iota(\del \sigma) \]
	But by assumption $(b+uB)\iota(\del \sigma) = 0$ so $[\iota(\del \sigma)]$ is a class in $HC^-_{n-1}(P_{\overline{\sigma}})$ which is zero for $n \ge 1$ since $\overline\sigma$ is contractible.
\end{proof}
The argument above, however, does not give us a way to explicitly construct a representative for $\iota(\sigma)$, which may involve quite complicated expressions for higher dimensions. For a 1-simplex $(v_0 v_1)$, we can choose its corresponding Hochschild chain to be $(v_0 v_1)[(v_0 v_1)^{-1}]$, which for simplicity of notation we denote $01[10]$. This lifts to the negative cyclic chain
\[ 01[10] - 01[10|01|10] u + 01[10|01|10|01|10] u^2 \dots \]
To a 2-simplex $(v_0 v_1 v_2)$, we need to then assign a Hochschild 2-chain whose boundary is $01[10] + 12[21] - 02[20]$; one possible choice is
\[ 01*12[21|10] - 01*12[20|02*21*10]+01*12*20[012*21*10]-012*20*012*21*10. \]
As for the chain we associated to the 1-simplex, the Hochschild chain above has some lift to a negative cyclic chain, whose expression is not particularly enlightening, and so on.

Now, given a simplicial triangulation $\Lambda$ of a Poincar\'e duality space $X$, we can look at the image $\lambda = \iota(c_X)$ of its fundamental chain and find the inverse $\alpha$ of its $u=0$ component $\lambda_0$, that is, the element $\alpha$ in $C^*_{(2)}(P_\Lambda)$ which satisfies the equation in the statement of \cref{prop:chainLevelSmoothCY}.

\subsubsection{The circle}\label{sec:theCircle}
Let us illustrate how to do this for the simplest non-trivial case, that is, for the circle. We pick a triangulation that exhibits it as the boundary of the 2-simplex $(v_0 v_1 v_2)$. The fundamental chain $(v_0 v_1) + (v_1 v_2) - (v_0 v_2)$ maps to the Hochschild chain
\[ \lambda_0 = 01[10] + 12[20] - 02[20], \] 
again using our shorthand notation.

We are looking for an inverse $\alpha$ to $\lambda_0$. Recall that this is a vertex that receives any number of $A[1]$ arrows above and below, and outputs two arrows in $A$; if $A$ were an $A_\infty$-algebra this would be the space $\Hom(T(A[1]\otimes T(A[1]),A\otimes A)$; as our chosen $A$ has multiple objects one replaces the $A$ factors by morphism spaces and sum over objects.

We proceed inductively on the length of the inputs. Since $A$ is concentrated in non-negative degrees and we want $\alpha$ of (cohomological) degree +1, its component $\alpha^{0,0}$ with zero entries on both sides is necessarily zero.

The first non-trivial degree to be specified is the component $\alpha^{0,1}$, that is, with zero inputs on top and one input on the bottom. Note that since $A$ is a category, and not an algebra, the regions in a diagram are labeled by objects, which we will denote by writing $\underline{0},\underline{1}$ and $\underline{2}$ for each of the objects of $A$ (vertices of the triangle).

Recall that our convention is to use the \emph{reduced} Hochschild complex, therefore the element $\alpha$ evaluates to zero whenever one of the inputs is an identity morphism. Now, every non-identity morphism in $A$ is either `counter-clockwise' (for example, the morphisms $(01),(01*12),(20)$ etc.) or `clockwise' (for example, $(10),(12*20*01)$ etc.)

Let $P: i\to j$ be one of these morphisms, that is, a path from some vertex $i$ to a vertex $j$. We define
\[\tikzfig{
	\node [vertex] (v1) at (0,0) {$\alpha$};
	\draw [-w-] (v1) to (-1,0);
	\draw [->-] (v1) to (1,0);
	\draw [->-] (0,-1) to (v1);
	\node at (0,0.5) {$\underline{k}$};
	\node at (0,-1.2) {$P$};
	\node at (0.6,-0.5) {$\underline{i}$};
	\node at (-0.6,-0.5) {$\underline{j}$};
} = \begin{cases} \frac{1}{2}\left(\delta_{jk} e_k \otimes P - \delta_{ik}P \otimes e_k \right), \text{\ if\ } P \text{\ counter-clockwise} \\
-\frac{1}{2}\left(\delta_{jk} e_k \otimes P - \delta_{ik}P \otimes e_k \right), \text{\ if\ } P \text{\ clockwise} \end{cases}
\]
where $\delta_{\dots}$ is the usual delta function on the set of pairs of vertices.

\begin{proposition}
	The prescription above extends uniquely to a closed element $\alpha \in C^1_{(2)}(A)$, symmetric under the $\ZZ/2$ action, which moreover satisfies the equation
	\[\tikzfig{
		\node [vertex] (v3) at (0,0.8) {$\alpha$};
		\node [vertex] (v2) at (0,0) {$\lambda_0$};
		\node [bullet] (v4) at (0.8,0) {};
		\node [bullet] (v5) at (0,-0.8) {};
		\node (v6) at (0,-1.6) {};
		\draw [->-] (v2) to (v4);
		\draw [->-,shorten <=6pt] (0,0.8) arc (90:0:0.8);
		\draw [-w-,shorten <=6pt] (0,0.8) arc (90:270:0.8);
		\draw [->-] (0.8,0) arc (0:-90:0.8);
		\draw [->-] (v5) to (v6);
	} \quad = \quad 
	\tikzfig{
		\node [vertex] (v) at (0,0) {$1$};
		\draw [->-] (v) to (0,-1.5);}
	\]
\end{proposition}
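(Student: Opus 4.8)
The plan is to exploit the fact that, for the circle, $A=P_\Lambda$ is an ordinary $\kk$-linear category concentrated in degree $0$: since $\Lambda=\partial\Delta^2$ carries no $2$-simplices, every reduced necklace is a degree-$0$ morphism and the differential vanishes, so the only nonzero structure map is $\mu=\mu_2$. First I would pin down which components of $\alpha$ can be nonzero by a degree count. In the grading on $C^*_{(2)}(A)$ of \cite{kontsevich2021precalabiyau} for dimension $d=1$, with $A$ in degree $0$ a component $\alpha^{p,q}$ sits in cohomological degree equal to its total input length $p+q$; hence a degree-$+1$ element has $\alpha^{0,0}=0$ and is supported entirely in length $1$, i.e. $\alpha=\alpha^{0,1}+\alpha^{1,0}$, with no higher terms possible. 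This collapses the ``induction on input length'' to a single genuine step and reframes the proposition: I must make sense of $\alpha^{0,1}$, see that $\ZZ/2$-symmetry forces $\alpha^{1,0}$, check $d\alpha=0$, and verify the inverse equation.

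Next I would establish well-definedness of the prescription and fix $\alpha^{1,0}$. Since the formula for $\alpha^{0,1}$ is given case-by-case according to whether the path $P$ is clockwise or counter-clockwise, I would first record the combinatorial lemma that on the triangle every reduced non-identity morphism is \emph{purely} clockwise or purely counter-clockwise: turning around would require an edge immediately followed by its formal inverse, which is killed by the necklace relation, so a reduced word uses only the counter-clockwise edges $01,12,20$ or only the clockwise edges $10,21,02$. This makes the assignment a well-defined linear map vanishing on identities, as required for the reduced complex. As degree leaves only $\alpha^{1,0}$ undetermined, I would define it as the $\ZZ/2$-rotation of $\alpha^{0,1}$, using the earlier lemma identifying the rotated vertex with the dual $\alpha^!$; the antisymmetry $e_k\otimes P-P\otimes e_k$ of the prescribed output is exactly the shape compatible with a $\ZZ/2$-invariant element, so symmetry determines $\alpha^{1,0}$ uniquely.

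For closedness I would compute $d\alpha=[\mu,\alpha]$. Because $\alpha$ has only length-$1$ components and $\mu=\mu_2$ raises length by one, $d\alpha$ is supported in length $2$, and it must vanish identically for a closed degree-$+1$ extension to exist at all. I would expand the two contributions — $\mu_2$ merging two external inputs before feeding $\alpha$, and $\mu_2$ combining an output of $\alpha$ with an adjacent input — on an arbitrary pair of composable reduced paths, and show they cancel. Here the sign flip between the clockwise and counter-clockwise cases in the prescription is precisely what makes the two families of terms cancel, together with the collapses $01*10=e_0$, $12*21=e_1$, $20*02=e_2$ and their inverses. This is a finite, if fiddly, verification.

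The heart of the argument, and the step I expect to be the main obstacle, is the inverse equation: evaluating the composition ribbon quiver with $\alpha$ in the top vertex and $\lambda_0$ in the source, following the prescription of \cite[Sec.6.1.4]{kontsevich2021precalabiyau}, and showing it equals the unit cochain on the nose (so that one may take $\beta=0$ in \cref{prop:chainLevelSmoothCY}). I would treat the three terms of $\lambda_0$ separately: for each, its single tail is fed into $\alpha$ and the two $\tfrac12$-weighted outputs are merged with the head via $\mu_2$, using relations such as $01*10=e_0$ to produce identity morphisms in the appropriate region. The subtle points are keeping track of the object labels $\underline0,\underline1,\underline2$ decorating the regions, the signs from the cw/ccw dichotomy and from the sign of the third term of $\lambda_0$, and the way the factor $\tfrac12$ is doubled by the two placements (from $\alpha^{0,1}$ and $\alpha^{1,0}$, equivalently the two outputs) so that the contributions assemble to exactly $1$ on each object. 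Getting this sign-and-weight bookkeeping to close up is the crux; once it does, uniqueness is immediate, since $\alpha^{0,1}$ is prescribed, $\alpha^{1,0}$ is forced by symmetry, and all higher components vanish by degree.
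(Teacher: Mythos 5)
Your proposal follows essentially the same route as the paper's proof: the degree count reducing $\alpha$ to its length-one components, the verification of closedness by explicit cancellation of the length-two part of $[\mu,\alpha]$, and the direct evaluation of the composition quiver on each object label, checking that it returns $e_i$ and vanishes on inputs of length $\ge 1$. The one inaccuracy is in your expected bookkeeping for the final step: in the paper's computation the identity arises as $\tfrac12 e_i + \tfrac12 e_i$ from the \emph{two terms of $\lambda_0$} corresponding to the two edges of the triangle incident to the vertex $i$ (each fed into $\alpha$ through a single placement, the third term contributing zero), not from pairing the $\alpha^{0,1}$ and $\alpha^{1,0}$ placements of a single term.
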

\begin{proof}
	For the element $\alpha$ to be closed, it needs to satisfy compatibility equations such as
	\[ d \left( \tikzfig{
		\node [vertex] (v1) at (0,0) {$\alpha$};
		\draw [-w-] (v1) to (-1,0);
		\draw [->-] (v1) to (1,0);
		\draw [->-] (0.5,-1) to (v1);
		\draw [->-] (-0.5,-1) to (v1);
		\node at (0,0.5) {$\underline{\ell}$};
		\node at (0.6,-1.2) {$P_1$};
		\node at (-0.6,-1.2) {$P_2$};
		\node at (0.6,-0.5) {$\underline{i}$};
		\node at (0,-0.7) {$\underline{j}$};
		\node at (-0.6,-0.5) {$\underline{k}$};
	} \right)\ = \quad \tikzfig{
		\node [vertex] (v1) at (0,0) {$\alpha$};
		\draw [-w-] (v1) to (-1,0);
		\draw [->-] (v1) to (1,0);
		\draw [->-] (0,-1) to (v1);
		\node at (0,0.5) {$\underline{\ell}$};
		\node at (0,-1.2) {$P_1 * P_2$};
		\node at (0.6,-0.5) {$\underline{i}$};
		\node at (-0.6,-0.5) {$\underline{k}$};
	} \quad - \quad \tikzfig{
		\node [vertex] (v1) at (0,0) {$\alpha$};
		\node [bullet] (v2) at (-0.8,0) {};
		\draw [-w-] (v1) to (v2);
		\draw [->-] (v2) to (-1.4,0);
		\draw [->-] (-0.8,-1) to (v2);
		\draw [->-] (v1) to (1,0);
		\draw [->-] (0.5,-1) to (v1);
		\node at (0,0.5) {$\underline{\ell}$};
		\node at (0.6,-1.2) {$P_1$};
		\node at (-0.8,-1.2) {$P_2$};
		\node at (0.7,-0.5) {$\underline{i}$};
		\node at (-0.3,-0.6) {$\underline{j}$};
		\node at (-1.2,-0.5) {$\underline{k}$};
	} \quad - \quad \tikzfig{
		\node [vertex] (v1) at (0,0) {$\alpha$};
		\node [bullet] (v2) at (0.8,0) {};
		\draw [-w-] (v1) to (-1,0);
		\draw [->-] (0.8,-1) to (v2);
		\draw [->-] (v1) to (v2);
		\draw [->-] (v2) to (1.4,0);
		\draw [->-] (-0.5,-1) to (v1);
		\node at (0,0.5) {$\underline{\ell}$};
		\node at (0.8,-1.2) {$P_1$};
		\node at (-0.6,-1.2) {$P_2$};
		\node at (1.2,-0.5) {$\underline{i}$};
		\node at (0.3,-0.6) {$\underline{j}$};
		\node at (-0.7,-0.5) {$\underline{k}$};
	}
\]
	together with a similar equation with one input on the top and one input on the bottom.
	
	Since $A$ is concentrated in degree zero, any element of $\alpha$ with two or more inputs vanishes, so the left-hand side of these equations is always zero; we must check that the right-hand side is zero, which we can explicitly calculate.
	
	Now, it remains to check that this element is an inverse to the Hochschild chain $\lambda_0 = 01[10] + 12[20] - 02[20]$. Let us calculate the Hochschild cochain given by the diagram
	\[\tikzfig{
		\node [vertex] (v3) at (0,0.8) {$\alpha$};
		\node [vertex] (v2) at (0,0) {$\lambda_0$};
		\node [bullet] (v4) at (0.8,0) {};
		\node [bullet] (v5) at (0,-0.8) {};
		\node (v6) at (0,-1.6) {};
		\draw [->-] (v2) to (v4);
		\draw [->-,shorten <=6pt] (0,0.8) arc (90:0:0.8);
		\draw [-w-,shorten <=6pt] (0,0.8) arc (90:270:0.8);
		\draw [->-] (0.8,0) arc (0:-90:0.8);
		\draw [->-] (v5) to (v6);
	}\]
	Since $A$ is a dg category, all the higher structure maps $\mu^{\ge 3}_A$ are zero, so the only nontrivial terms occur when the $A[1]$-arrow coming from $\lambda_0$ lands in $\alpha$. We calculate the values of this cochain on zero inputs; for example, when we label the region around the diagram with the object $0$, we have
	\[\tikzfig{
		\node [vertex] (v3) at (0,0.8) {$\alpha$};
		\node [bullet] (v4) at (0.8,0) {};
		\node [bullet] (v5) at (0,-0.8) {};
		\node (v6) at (0,-1.6) {};
		\draw [->-] (0.2,-0.2) to (v4);
		\draw [->-] (0,0.5) to (v3);
		\node at (0,0.1) {$(10)$};
		\node at (0,-0.4) {$(01)$};
		\node at (0,1.3) {$\underline{0}$};
		\draw [->-,shorten <=6pt] (0,0.8) arc (90:0:0.8);
		\draw [-w-,shorten <=6pt] (0,0.8) arc (90:270:0.8);
		\draw [->-] (0.8,0) arc (0:-90:0.8);
		\draw [->-] (v5) to (v6);
	} \quad + \quad \tikzfig{
		\node [vertex] (v3) at (0,0.8) {$\alpha$};
		\node [bullet] (v4) at (0.8,0) {};
		\node [bullet] (v5) at (0,-0.8) {};
		\node (v6) at (0,-1.6) {};
		\draw [->-] (0.2,-0.2) to (v4);
		\draw [->-] (0,0.5) to (v3);
		\node at (0,0.1) {$(21)$};
		\node at (0,-0.4) {$(12)$};
		\node at (0,1.3) {$\underline{0}$};
		\draw [->-,shorten <=6pt] (0,0.8) arc (90:0:0.8);
		\draw [-w-,shorten <=6pt] (0,0.8) arc (90:270:0.8);
		\draw [->-] (0.8,0) arc (0:-90:0.8);
		\draw [->-] (v5) to (v6);
	} \quad - \quad \tikzfig{
		\node [vertex] (v3) at (0,0.8) {$\alpha$};
		\node [bullet] (v4) at (0.8,0) {};
		\node [bullet] (v5) at (0,-0.8) {};
		\node (v6) at (0,-1.6) {};
		\draw [->-] (0.2,-0.2) to (v4);
		\draw [->-] (0,0.5) to (v3);
		\node at (0,0.1) {$(20)$};
		\node at (0,-0.4) {$(02)$};
		\node at (0,1.3) {$\underline{0}$};
		\draw [->-,shorten <=6pt] (0,0.8) arc (90:0:0.8);
		\draw [-w-,shorten <=6pt] (0,0.8) arc (90:270:0.8);
		\draw [->-] (0.8,0) arc (0:-90:0.8);
		\draw [->-] (v5) to (v6);
	}\]
	which evaluates to $\frac{1}{2} e_0*(10)*(01) + 0 + \frac{1}{2} (02)*(20)*e_0 = e_0$, and similarly for the diagrams with the outside region labeled with $\underline{1}$ or $\underline{2}$. Moreover, this diagram evaluates to zero on any input of length $\ge 1$; it is therefore exactly the unit cochain in $C^0(A)$.
\end{proof}

We note that the guess for the element $\alpha$ above can be derived from a smaller set of data by using the closedness condition: we can just specify that $\alpha$ gives some sort of `local' pairing, by setting
\[\tikzfig{
	\node [vertex] (v1) at (0,0) {$\alpha$};
	\draw [-w-] (v1) to (-1,0);
	\draw [->-] (v1) to (1,0);
	\draw [->-] (0,-1) to (v1);
	\node at (0,0.5) {$\underline{0}$};
	\node at (0,-1.2) {$(10)$};
	\node at (0.6,-0.5) {$\underline{1}$};
	\node at (-0.6,-0.5) {$\underline{0}$};
} = \frac{1}{2}e_0\otimes (10), \quad \tikzfig{
	\node [vertex] (v1) at (0,0) {$\alpha$};
	\draw [-w-] (v1) to (-1,0);
	\draw [->-] (v1) to (1,0);
	\draw [->-] (0,-1) to (v1);
	\node at (0,0.5) {$\underline{0}$};
	\node at (0,-1.2) {$(02)$};
	\node at (0.6,-0.5) {$\underline{0}$};
	\node at (-0.6,-0.5) {$\underline{2}$};
	} = -\frac{1}{2}(02)\otimes e_0, \quad \tikzfig{
	\node [vertex] (v1) at (0,0) {$\alpha$};
	\draw [-w-] (v1) to (-1,0);
	\draw [->-] (v1) to (1,0);
	\draw [->-] (0,-1) to (v1);
	\node at (0,0.5) {$\underline{0}$};
	\node at (0,-1.2) {$(21)$};
	\node at (0.6,-0.5) {$\underline{2}$};
	\node at (-0.6,-0.5) {$\underline{1}$};
	}=0
\]
and analogously for $\underline{1},\underline{2}$ above; that is, assigning zero when the simplex above and the simplex below do not intersect, and some appropriately signed local pair of paths when they do. In this sense, our expression for the element $\alpha$ is `localized' to a neighborhood of the diagonal in the product space $S^1 \times S^1$.

This smooth Calabi-Yau structure on chains on $\Omega S^1$ is well-known in the literature; for a recent explicit description of this structure from another angle, see the recent works \cite{bozec2021calabiyau1,bozec2021calabiyau2}. Under the equivalence between our dg category $A$ and the dg algebra $k[x^{\pm 1}]$, the element above $\lambda_0$ maps to the Hochschild chain cohomologous to $x^{-1}[x]$, corresponding to the noncommutative form $x^{-1} d_{dR}x$ in the description of \cite[Sec.3]{bozec2021calabiyau1}.

\section{Noncommutative Legendre transform}\label{sec:ncLegendre}
We recall from \cite{kontsevich2006notes,kontsevich2021precalabiyau} that in the language of noncommutative geometry, an $A_\infty$-algebra $(A,\mu)$ can be thought of as a noncommutative pointed dg manifold $X_A$ with an integrable vector field $Q_\mu$; the space $C^*_{[d]}(A)$ where pre-CY structures of dimension $d$ live is then the space of shifted polyvector fields on $X_A$, with the necklace bracket playing the role of Schouten-Nijenhuis bracket. 

A pre-CY structure $m$ then satisfies the (quadratic) Maurer-Cartan equation $[m,m] = 0$. We denote by $\cM_\mathrm{pre-CY}$ the space of such solutions; at a given point $m$ this space has tangent complex given by
\[ T_m \cM_\mathrm{pre-CY} = (C^*_{[d]}(A), [m,-]_\nec), \]
that is, polyvector fields with differential given by necklace bracket with $m$.

As mentioned in the introduction, we will eventually construct a map that takes a smooth CY structure, represented by some negative cyclic chain $\lambda \in CC^-_*(A)$, and produces a point of $\cM_\mathrm{pre-CY}$; in the process we will construct another map in the inverse direction, and in \cref{sec:simplicial} we argue that this map is `one-to-one', in a certain homotopical sense. 

In this section we will explain why such a relation should be seen as a noncommutative analog of the Legendre transform, and then we will build this transform using the formalism of ribbon quivers developed in \cite{kontsevich2021precalabiyau}. But first let us take a digression through the theory of usual Legendre transforms.

\subsection{Commutative and odd Legendre transform}

\subsubsection{Legendre transform on vector bundles}
Recall that the classical Legendre transform can also be defined (fiberwise) on a vector bundle. Let $M$ be a manifold and $E \to M$ an $N$-dimensional vector bundle, with a real-valued smooth function $L: E \to \RR$, fiberwise convex. For simplicity we assume $L$ is bounded below by some positive-definite quadratic function. We describe the Legendre transform in two steps: we first take the \emph{fiberwise derivative}
\[ FL : E \to E^\vee, \]
defined in dual coordinates by 
\[ FL(x_1,\dots,x_N,v_1,\dots,v_N) = (x_1,\dots,x_N,\del L/\del v_1,\dots,\del L/ \del v_N)\]
which gives a diffeomorphism, given our assumptions on $L$.

We then construct the \emph{energy function} associated to $L$ given by
\[ e_L = \sum_i v_i \frac{\del L}{\del v_i} - L \]
and then we can define the Legendre transform $H:E^\vee \to \RR$ by
\[ H = \cL(L) := e_L \circ (FL)^{-1} \]
which, using the pairing between $E$ and $E^\vee$, can also be expressed on each fiber by the classical Legendre transform
\[ H_x(p) = \mathrm{crit.value}_{v} (vp - L_x(v)). \]

This function has the property that $FH$ is the inverse diffeomorphism to $FL$, and $\cL(H) = L$. Moreover, we have the following fact.
\begin{proposition}\label{prop:tangentLegendre}
	The pullback under the fiberwise derivative corresponding to $H$ is minus the variational derivative of the Legendre transform at $L$, that is:
	\[ (FH)^* = - \frac{\delta \cL(f)}{\delta f}|_{f=L}. \] 
\end{proposition}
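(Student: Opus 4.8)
The plan is to bypass the two-step description (fiberwise derivative followed by energy function) and work directly with the fiberwise critical-value formula $\cL(f)_x(p) = \mathrm{crit.value}_v(\langle v,p\rangle - f_x(v))$, for which the variational derivative is computed most transparently by the envelope theorem. I would fix a point $x \in M$ and carry out everything on the fiber $E_x \cong \RR^N$, since the whole construction is fiberwise. For a fiberwise convex $f$ near $L$, the critical point of $v \mapsto \langle v,p\rangle - f_x(v)$ is the unique $v_*(f,p)$ solving $p = Ff(v)$, i.e. $v_* = (Ff)^{-1}(p)$, and at this point $\cL(f)_x(p) = \langle v_*, p\rangle - f_x(v_*)$.

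First I would deform $L$ along the affine path $f_t = L + t\,\delta f$, where $\delta f$ is an arbitrary smooth function on $E$, and write $v_*(t) = (Ff_t)^{-1}(p)$ for the resulting family of critical points, so that $\cL(f_t)_x(p) = \langle v_*(t),p\rangle - f_{t,x}(v_*(t))$. Differentiating at $t=0$, the contributions coming from the $t$-dependence of $v_*(t)$ assemble into $\langle \dot v_*(0),p\rangle - (\del L/\del v)(v_*(0))\cdot \dot v_*(0)$, and this vanishes precisely because $v_*(0)$ is a critical point of the objective, where $p = (\del L/\del v)(v_*(0)) = FL(v_*(0))$. This is exactly the envelope theorem, and it leaves only the explicit $t$-derivative:
\[ \frac{d}{dt}\Big|_{t=0}\cL(f_t)_x(p) = -\frac{\del f_{t,x}}{\del t}\Big|_{t=0}(v_*(0)) = -\delta f_x(v_*(0)). \]

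To conclude, I would identify $v_*(0) = (FL)^{-1}(p) = FH(p)$, using the already-established fact that $FH$ is the inverse diffeomorphism to $FL$. Hence $\frac{d}{dt}|_{t=0}\cL(L + t\,\delta f)(p) = -\delta f(FH(p)) = -\big((FH)^*\delta f\big)(p)$, and since $\delta f$ and $p$ are arbitrary this is precisely the asserted identity $(FH)^* = -\,\delta\cL(f)/\delta f|_{f=L}$.

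The one genuine subtlety I expect to be the main obstacle is justifying the envelope theorem, namely the smoothness of $t \mapsto v_*(t)$ that makes the differentiation above legitimate. This I would settle via the implicit function theorem applied to the defining relation $p = Ff_t(v)$: the fiberwise convexity of $L$ (strengthened by the positive-definite quadratic lower bound) forces the fiber Hessian $\del^2 L/\del v^2$ to be invertible at $v_*(0)$, so the critical point persists and varies smoothly in $t$ for small $t$. Once this is in place, the computation at fixed $x$ globalizes over $M$ verbatim, since no step uses anything beyond the fiber.
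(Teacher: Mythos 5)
Your proof is correct, but it takes a genuinely different route from the paper's. The paper's (two-line) argument stays inside its two-step description of the transform: it varies $L \to L + \delta L$, writes the defining functional relation $e_{H+\delta H} = (L+\delta L)\circ F(H+\delta H)$, and expands to first order to extract $\delta H$ implicitly; this phrasing is chosen to foreshadow the ``implicit function theorem'' strategy of \cref{sec:inverting}, which is the only option in the noncommutative setting where no critical-value formula exists. You instead differentiate the fiberwise critical-value formula and invoke the envelope theorem to kill the terms coming from the motion of the critical point $v_*(t)$; at this commutative stage that is cleaner and more self-contained, and the two computations of course agree since the terms cancelled by the envelope theorem are exactly the first-order expansion of $e_L$ along $FL$. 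One small imprecision in your justification: a positive-definite quadratic \emph{lower bound} on a fiberwise convex $L$ does not by itself force the fiber Hessian $\del^2 L/\del v^2$ to be invertible at $v_*(0)$ (the Hessian of a convex function can degenerate at isolated points while such a bound persists); what you actually need, and what the paper assumes outright, is that $FL$ is a fiberwise diffeomorphism, which supplies the invertible derivative required for your implicit-function-theorem step. With that substitution your argument is complete.
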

\begin{proof}
	Note that $(FH)^*$ is a ring isomorphism on functions $\cO(E) \to \cO(E^\vee)$. We vary $L \to L + \delta L$ and calculate the variation $H \to H + \delta H$ by expanding the relation $\cE_{H+\delta H} = (L + \delta L) \circ F(H + \delta H)$ to first order.
\end{proof}

\subsubsection{Odd Legendre transform}\label{sec:oddLegendre}
We now discuss an analogue of the classical Legendre transform, but that now goes between odd tangent and cotangent bundles. A description of this odd Legendre transform appears in the early work \cite{alexandrov1997geometry}, and its idea appears as a motivation for the discussion in \cite{pantev2013shifted}, relating shifted symplectic structures to nondegenerate shifted Poisson structures.

Let us denote by $\Pi T M$ the odd cotangent bundle of $M$, and its dual $\Pi T^* M$ its odd tangent bundle. In precise terms, we will consider maps between the spaces of (formal) functions on those bundles, namely the space of polyvector fields
\[ \cO(\Pi T^* M) = \wedge^* T M = \cO(M)[\alpha_i] \]
where the anticommuting variables $\alpha_i$ represent the vector fields $\del/\del x^i$, and the space of differential forms
\[ \cO(\Pi T M) = \wedge^* \Omega^1 \cO(M)[\beta_i] \]
where the anticommuting variables $\beta^i$ represent the one-forms $dx^i$. We make the convention that the degree of $\alpha_i$ is $+1$ and of $\beta_i$ is $-1$.

We will write a $p$-vector field in coordinates as 
\[ P = \frac{1}{p^!}P^{i_1,\dots,i_p} \alpha_{i_1} \dots \alpha_{i_p} \]
using the summation convention for repeated indices. We can also regard this as an element of the tensor algebra of $TM$ by using the antisymmetric embedding:
\[ P = \frac{1}{p^!} P^{j_1,\dots,j_p} \delta^{i_1,\dots,i_p}_{j_1,\dots,j_p} \alpha_{i_1} \dots \alpha_{i_p} \]
using the Kronecker symbol giving the sign of the permutation. This allows us to calculate derivatives: we have that the derivative $\del/\del\alpha_i$ acts as
\[ \frac{\del}{\del \alpha_{i_n}} \alpha_{i_1} \dots \alpha_{i_p} = p (-1)^n \alpha_{i_1} \dots \hat\alpha_{i_n} \alpha_{i_p} \]
where the hat denotes omission. 

Let $\gamma \in \cO(\Pi T^* M)$ be a polyvector field \emph{without degree one term}, which we write in coordinates as
\[ \gamma(x) =  \frac{\gamma^{ij}_2(x)}{2!}\alpha_i\alpha_j + \frac{\gamma^{ijk}_3(x)}{3!}\alpha_i\alpha_j\alpha_k + \dots \]
where $\gamma_p(x)$ is some $p$-tensor depending on $x$. Let us assume that the degree two term $\gamma_2$ is given by a positive definite matrix.

In terms of functions on the odd space $\Pi T^*M$, $\gamma$ should be seen as a convex function with a fiberwise critical point at the `locus $\alpha = 0$'. The odd fiberwise derivative should then send a neighborhood of this locus to the neighborhood of the `locus $\beta = 0$', and the odd Legendre transform should produce a formal series in the variables $\beta_i$, that is, a differential form.

Let us calculate the odd Legendre transform $F\gamma$: first we write
\[ \beta^i = \frac{\del \gamma}{\del \alpha_i}  = \gamma_2^{ij} \alpha_j +  \frac{\gamma_3^{ijk}}{2!} \alpha_j\alpha_k + \dots \]
and then invert this equation, expressing each $\alpha_i$ in terms of the $\beta$ variables, as a function $f_i(\gamma,\beta)$, depending on the matrices $\gamma_p$ for all $p$. This is possible if and only if the matrix $\gamma_2$ is invertible: denoting $\{M_{ij}\}$ for its inverse we can calculate $f_i$ by an iterative procedure. To second order this gives
\[ \alpha_i = f_i(\gamma,\beta) = M_{ij} \beta^j + M_{ij}M_{ka}M_{lb}\frac{\gamma_3^{jkl}}{2!}\beta^a\beta^b + \dots \]
In other words, the functions $f_i$ are the data of the inverse induced map on functions $((F\gamma^*)^{-1})$.

We calculate now that the energy function associated to $\gamma$ is given by
\[ e_\gamma = \alpha_i\frac{\del \gamma}{\del\alpha_i} - \gamma= \frac{\gamma_2^{ij}}{2!}\alpha_i\alpha_j + 2\times \frac{\gamma_3^{ijk}}{3!} \alpha_i\alpha_j\alpha_k + \dots + (p-1)\times \frac{\gamma_p^{ij\dots}}{p!}\alpha_i \alpha_j \dots + \dots \]
that is, we just multiply each degree $p$ term by $p-1$. The odd Legendre transform $\lambda = \cL(\gamma)$ is then calculated by substituting, in the expression above, $\alpha_i \mapsto  f_i(\gamma,\beta)$.
\begin{proposition}\label{prop:oddLegendre}
	The polyvector field $\gamma$ satisfies $[\gamma,\gamma] = 0$, where $[,]$ is the Schouten-Nijenhuis bracket, if and only if $d\lambda = 0$. 
\end{proposition}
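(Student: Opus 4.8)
The plan is to reduce the statement to a single pointwise identity expressing $d\lambda$, up to a nonzero scalar, as the Schouten--Nijenhuis bracket $[\gamma,\gamma]$ pulled back along the (invertible) change of variables $\alpha_i = f_i(\gamma,\beta)$. The two inputs to this identity are, first, the description of the de Rham differential on $\cO(\Pi T M)$ as the homological vector field $d = \beta^j\,\del/\del x^j$ (inserting $\beta^j = dx^j$ on the left of the $x^j$-derivative of the coefficient functions); and second, the Darboux-coordinate formula for the Schouten--Nijenhuis bracket on $\cO(\Pi T^* M)$, which for $\gamma$ with itself reads $[\gamma,\gamma] = 2\,(\del\gamma/\del\alpha_j)(\del\gamma/\del x^j)$. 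This is the canonical odd Poisson bracket attached to the pairing $x^j \leftrightarrow \alpha_j$, and its vanishing is the familiar involutivity/Jacobiator condition, as one checks directly on the bivector term $\gamma_2$.

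First I would record the dual Legendre relations. Writing $\lambda(x,\beta)$ as the critical value $\mathrm{crit}_\alpha\,(\alpha_i\beta^i - \gamma(x,\alpha))$ --- equivalently $\lambda = e_\gamma \circ (F\gamma)^{-1}$ with $e_\gamma = \alpha_i\,\del\gamma/\del\alpha_i - \gamma$ --- the envelope theorem gives $\del\lambda/\del\beta^i = \alpha_i = f_i(\gamma,\beta)$ (so that $F\lambda = (F\gamma)^{-1}$) and, crucially, $\del\lambda/\del x^j = -\,\del\gamma/\del x^j$ evaluated at $\alpha = f(\gamma,\beta)$. In both cases the contributions of the implicit $x$- and $\beta$-dependence of $\alpha$ cancel precisely because of the defining relation $\beta^i = \del\gamma/\del\alpha_i$.

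Then I would compute
\[ d\lambda = \beta^j\,\frac{\del\lambda}{\del x^j} = -\,\beta^j\,\frac{\del\gamma}{\del x^j}\Big|_{\alpha = f} = -\Big(\frac{\del\gamma}{\del\alpha_j}\,\frac{\del\gamma}{\del x^j}\Big)\Big|_{\alpha = f} = -\tfrac{1}{2}\,[\gamma,\gamma]\big|_{\alpha = f(\gamma,\beta)}, \]
where the second equality uses the envelope relation, the third substitutes $\beta^j = \del\gamma/\del\alpha_j$, and the last is the Darboux formula above. Since the hypothesis that $\gamma_2$ is positive definite (hence invertible) guarantees that $\alpha = f(\gamma,\beta)$ is an invertible formal change of coordinates, the pulled-back polyvector field $[\gamma,\gamma]\big|_{\alpha = f}$ vanishes if and only if $[\gamma,\gamma]$ does; this yields the desired equivalence $d\lambda = 0 \Leftrightarrow [\gamma,\gamma] = 0$.

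The main obstacle is the bookkeeping of signs in the $\ZZ$-graded setting: the variables $\alpha_i$ and $\beta^i$ anticommute and carry opposite degrees, so I must fix left- versus right-derivative conventions consistently (matching the rule for $\del/\del\alpha_i$ stated above) and verify that the envelope-theorem cancellations, and the factor of $2$ in the bracket formula, survive with the correct signs. I would also double-check that the coordinate expression $[\gamma,\gamma] = 2\,(\del_\alpha\gamma)(\del_x\gamma)$ agrees with the Schouten--Nijenhuis normalization used elsewhere in the paper. Once these conventions are pinned down, every remaining step is a routine formal computation, and the only conceptual ingredients are the envelope theorem and the invertibility of the fiberwise derivative.
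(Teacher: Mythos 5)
Your proposal is correct and follows essentially the same route as the paper's proof: compute $d\lambda = \beta^i\,\del\lambda/\del x^i$, observe that the implicit dependence through $f$ cancels because $\beta^i = \del\gamma/\del\alpha_i$ (what you call the envelope theorem), and then substitute back along the locus $\alpha = f(\gamma,\beta)$ to identify the result with (a multiple of) $[\gamma,\gamma]$. The only discrepancy is the normalization factor of $\tfrac{1}{2}$ in front of the bracket, which you already flag as a convention to be pinned down and which does not affect the equivalence of vanishing; your explicit remark that the invertibility of $\alpha = f(\gamma,\beta)$ is what makes the statement an ``if and only if'' is a point the paper leaves implicit.
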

\begin{proof}
	We calculate
	\begin{align*}
		d\lambda &= \beta^i \frac{\del \lambda}{\del x^i} = \beta^i \frac{\del}{\del x^i} \left( f_j(\gamma,\beta) \beta_i - \gamma(\alpha_j \mapsto f^j(\gamma,\beta)) \right) \\
		&= \beta^i \frac{\del f_j}{\del x^i} \beta^j - \beta^i \frac{\del\gamma}{\del x^i} - \beta^i \frac{\del \gamma}{\del \alpha_i}\frac{\del f_i}{\del x} = - \beta^i \frac{\del\gamma}{\del x^i}
	\end{align*}
	along the locus $\alpha = f(\gamma,\beta)$ this is equal to
	\[ - \frac{\del \gamma}{\del \alpha}\frac{\del \gamma}{\del x_i} = -[\gamma,\gamma] \]
	proving the proposition.
\end{proof}

Performing the opposite operations, we see that odd Legendre transform gives a bijection between closed forms and polyvector fields satisfying the Maurer-Cartan-type equation $[\gamma,\gamma] = 0$. 

Let us now consider a slightly more general situation, where $\gamma$ may have a \emph{small} nonvanishing first order term, that is:
\[ \gamma = \gamma^i_1 \alpha_i + \frac{\gamma^{ij}_2(x)}{2!}\alpha_i\alpha_j + \frac{\gamma^{ijk}_3(x)}{3!}\alpha_i\alpha_j\alpha_k + \dots\]
In this case, the locus $\alpha = 0$ is not critical, so we should not expect it to be sent to a neighborhood of $\beta = 0$ by the odd Legendre transform. Instead it will be sent to a neighborhood of the `point' of $\Pi T X$ with fiber coordinates given by $\del\gamma/\del\alpha_i|_{\alpha_i =0} = \gamma_1^i$. 

More precisely: given a form $\lambda$ in terms of the $\beta_i$, we shift the coordinates to $\tilde\beta^i = \beta^i - \gamma_1^i$; the shifted form will then satisfy \cref{prop:oddLegendre} with respect to the differential $\tilde d$ in the variables $\tilde\beta^i$. We calculate this in terms of the original variables, expanding to linear order in $\gamma_1$
\begin{align*}
	 \tilde d \tilde \lambda &= \tilde \beta^i \frac{\del\tilde\lambda_0}{\del x^i} + \tilde \beta^i \frac{\del\tilde\lambda^1_j}{\del x^i}\tilde\beta^j + \dots \\
	 &= \beta^i \frac{\del}{\del x^i} \left(\lambda^0 + \lambda^1_j\beta^j + \dots\right) - \gamma_i^i \frac{\del}{\del x^i} \left(\lambda^0 + \lambda^1_j\beta^j + \dots\right) \\
	 &- \beta^i \frac{\del}{\del x^i} \left(\lambda^1_j \gamma_i^j + \dots\right) + O((\gamma_1)^2) \\
	 &=  (d-Lie_{\gamma_1}) \lambda + O((\gamma_1)^2)
 \end{align*}

Therefore we have the following result:
\begin{corollary}
	If the polyvector field $\gamma$ satisfies the equation $[\gamma,\gamma] = 0$, then its odd Legendre transform $\lambda$ satisfies $(d-Lie_{\gamma_1})\lambda = 0$ up to higher corrections in $\gamma_1$.
\end{corollary}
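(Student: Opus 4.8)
The plan is to treat the statement as a first-order perturbation of \cref{prop:oddLegendre}, using the observation recorded in the paragraph preceding the corollary: since $\gamma$ now carries a small first-order term $\gamma_1$, the odd fiberwise derivative $\beta^i = \del\gamma/\del\alpha_i$ sends the locus $\alpha=0$ to the locus $\beta^i = \gamma_1^i$ rather than to $\beta = 0$. I would therefore introduce the shifted fiber coordinate $\tilde\beta^i = \beta^i - \gamma_1^i$, which vanishes at $\alpha = 0$, and observe that in these coordinates the inversion problem of \cref{sec:oddLegendre} again has its critical point at the origin. The odd Legendre transform $\tilde\lambda$, now expanded as a power series in $\tilde\beta$, is then produced by the same iterative procedure as before.

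The first step is to check that the proof of \cref{prop:oddLegendre} applies verbatim in the shifted variables. That proof only uses the identity $\beta^i = \del\gamma/\del\alpha_i$ along the image of the fiberwise derivative together with the cancellation of the variational terms, and neither is disturbed by the presence of $\gamma_1$; this yields $\tilde d\,\tilde\lambda = -[\gamma,\gamma]$ with $\tilde d = \tilde\beta^i\,\del/\del x^i$, hence $\tilde d\,\tilde\lambda = 0$ whenever $[\gamma,\gamma]=0$. The remaining work is a change of variables: I would re-express $\tilde d\,\tilde\lambda$ in terms of the ordinary de Rham differential $d = \beta^i\,\del/\del x^i$ acting on the form $\lambda$, i.e.\ the same series read in the unshifted variables, by substituting $\tilde\beta^i = \beta^i - \gamma_1^i$ and expanding. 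Two corrections appear at linear order in $\gamma_1$: one from the replacement $\beta^i \mapsto \beta^i - \gamma_1^i$ in the prefactor of the differential, and one from the chain rule applied to the $x$-dependence of $\gamma_1$ when $\tilde\lambda$ is re-expanded about $\tilde\beta = 0$. These are exactly the two contributions needed to build the Lie derivative through Cartan's formula $Lie_{\gamma_1} = d\,\iota_{\gamma_1} + \iota_{\gamma_1}\,d$, so that together they produce $-Lie_{\gamma_1}\lambda$; this is the four-line computation displayed above, giving $\tilde d\,\tilde\lambda = (d - Lie_{\gamma_1})\lambda + O((\gamma_1)^2)$ and hence the corollary.

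The main obstacle I expect is bookkeeping rather than anything conceptual. Because $\gamma_1 = \gamma_1(x)$ depends on the base, it does not commute past $\del/\del x^i$, so one must be scrupulous about what is held fixed in each partial derivative and must separate the chain-rule contribution from the coefficientwise derivative $\gamma_1^i\,\del_{x^i}\lambda$. In addition, the variables $\beta^i$ are odd, so the signs arising when $\del/\del\beta^j$ is commuted through products have to be tracked carefully in order for the two corrections to combine into the Lie derivative with the correct overall sign; this is where an error is most likely to creep in. Finally I would note explicitly why the higher-order terms are harmless: they arise from iterating the shift $\beta \mapsto \beta - \gamma_1$ and from the quadratic and higher parts of the expansion, and therefore contribute only at order $(\gamma_1)^2$, consistent with the ``up to higher corrections'' in the statement. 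Beyond \cref{prop:oddLegendre} and Cartan's formula, no further input is needed.
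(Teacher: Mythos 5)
Your proposal matches the paper's own argument: the paper likewise shifts to $\tilde\beta^i = \beta^i - \gamma_1^i$, applies \cref{prop:oddLegendre} in the shifted variables to get $\tilde d\,\tilde\lambda = 0$, and then expands back to the original variables to first order in $\gamma_1$, where exactly the two correction terms you identify (the prefactor shift and the re-expansion of $\tilde\lambda$) combine into $-Lie_{\gamma_1}\lambda$. Your explicit appeal to Cartan's formula to package those two terms is a slightly more detailed justification of a step the paper simply asserts, but the route is the same.
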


Note that comparing the result above with the noncommutative analogies in \cite{kontsevich2006notes} motivates the statement of the correspondence we mentioned in the introduction (\cref{eqn:main}).

\subsubsection{Inverting the Legendre transform}\label{sec:inverting}
Before we move on to the noncommutative world, let us discuss one last thing about the odd Legendre transform. For simplicity we return to the case where $\gamma_1 = 0$. Given only the degree two matrix $\gamma^{ij}_2$ of $\gamma$, and the expression for the Legendre transform $\lambda = \cL(\gamma)$ (of all of $\gamma$), how can one compute $\gamma_3,\gamma_4,\dots$ \emph{without} explicitly writing down the inverse Legendre transform?

This question may seem silly since in this case we could just directly write down the inverse Legendre transform. But in the noncommutative case we will not have an explicit inverse; instead we will use a version of the implicit function theorem. That is, we know by definition that the pair $(\gamma,\lambda)$ solves the equation
\[ e_\gamma  = (F\gamma)(\lambda) \]
By calculating the expressions for the `energy function' of $\gamma$ and the fiberwise derivative  we have:
\begin{align*} \frac{\gamma_2^{ij}}{2!}\alpha_i\alpha_j + 2 \frac{\gamma_3^{ijk}}{3!} \alpha_i\alpha_j\alpha_k + \dots &= 
\lambda^2_{i_1i_2}\left(\gamma_2^{i_1j_1} \alpha_{j_1} +  \frac{\gamma_3^{i_1j_1k_1}}{2!} \alpha_{j_1}\alpha_{k_1} + \dots\right) \times \\
&\left(\gamma_2^{i_2j_2} \alpha_{j_2} +  \frac{\gamma_3^{i_2j_2k_2}}{2!} \alpha_{j_2}\alpha_{k_2} + \dots\right) + \lambda^3_{i_1i_2i_3}\left( \dots \right) \dots
\end{align*}

When the matrix $\gamma_2$ is nondegenerate, the quadratic term in $\alpha$ gives exactly the matrix equation $\lambda_2 = (\gamma_2)^{-1}$, as expected. In third order the equation is:\footnote{Note that to get the right numerical factors, it is easier to first embed the exterior algebra into the tensor algebra, antisymmetrically.}
\begin{equation}\label{eq:inverseLegendreTransform}
	\frac{\lambda^2_{ab}}{2!} (\gamma_2^{ai}\gamma_3^{bjk} + \gamma_3^{aij}\gamma_2^{bk}) + \lambda^3_{abc}\gamma_2^{ai}\gamma_2^{bj}\gamma_2^{ck}= 2\gamma_3^{ijk}
\end{equation}
which, using the solution for $\lambda_2$ and the skew-symmetry of $\gamma_3$, gives our desired solution $\gamma_3^{ijk} = \lambda^3_{abc}\gamma_2^{ai}\gamma_2^{bj}\gamma_2^{ck}$.

\subsubsection{Roadmap to the noncommutative version}
Above we explained that the odd Legendre transform relates polyvector fields  
\[ \gamma = \gamma_1 + \gamma_2 + \dots \]
that are solutions of the Maurer-Cartan equation $[\gamma,\gamma] = 0$ to differential forms that are closed under $d-Lie_{\gamma_1}$.

On one of the sides related by this noncommutative Legendre transform, we have an $A_\infty$-algebra $(A,\mu)$; in other words, a noncommutative pointed dg manifold $X_A$ with an integrable vector field $Q_\mu$; the space $C^*_{[d]}(A)$ where pre-CY structures of dimension $d$ live is interpreted as the space of shifted polyvector fields on $X_A$, with the necklace bracket playing the role of Schouten-Nijenhuis bracket.

On the other side, we have the space of noncommutative forms on $X_A$; this is the negative cyclic homology complex $CC^-_*(A)$ with differential $b_\mu + uB$. For a discussion of this relation in our context, see e.g. \cite[Sec.3.3]{kontsevich2021precalabiyau}.

The noncommutative Legendre transform then relates these two sides. Let us sketch a roadmap for what follows. We start with a pre-CY structure $m$, a solution of the Maurer-Cartan equation $[m,m] = 0$. From this, we will proceed in a completely parallel fashion to what we described for the odd Legendre transform. We first define the noncommutative fiberwise derivative, which is a map of complexes
\[ Fm: (CC^-_*(A), b_\mu+ uB) \to (C^*_{[d]}(A), [m,-]_\nec) \]

In analogy with the (super)commutative case, we prove that if $m_{(2)}$ is nondegenerate, the map above is a quasi-isomorphism; we then write the `energy function' $e_m$ corresponding to $m$ and define the Legendre transform of $m$ to be $(Fm)^{-1}(e_m)$, which defines a nondegenerate element in the negative cyclic complex.

In order to compute the inverse Legendre transform, going from a nondegenerate cyclic homology class to a pre-CY structure, we use the same `implicit function theorem' strategy of \cref{sec:inverting} to produce a map
\[ \Phi: (CC^-_d(A))_\mathrm{nondeg} \to C^*_{[d]}(A) \]
which lands inside of the space $\cM_\mathrm{pre-CY}$ of solutions to that equation, that is, pre-CY structures.

Just like the ordinary Legendre transform, the map $\Phi$ is also `one-to-one', but in a more sophisticated sense; note that one of the sides has a natural `linear' notion of equivalence ($b_\mu+uB$-cohomology) but the other one does not. We will later explain in \cref{sec:simplicial} what is the correct notion of equivalence.

\subsection{Tube quivers}
Our constructions of the maps $Pm$ and $\Phi$ are described using a specific type of ribbon quivers, which we now explain. We refer to \cite{kontsevich2021precalabiyau} for the general definition of ribbon quivers and for the formalism that calculates their action on Hochschild chains.

\begin{definition}
	For any integer $\ell \ge 1$, the space $\scrT_{(\ell)}$ of \emph{tube quivers with $\ell$ outgoing edges} is the vector space spanned by ribbon quivers corresponding to genus zero surfaces with two boundary components: one boundary component with a closed input (source in $V_\times$) and another boundary component with $\ell$ open outputs (sinks in $V_\mathrm{open-out}$).
\end{definition}

For each integer $d$, a $d$-orientation on a ribbon graph can be specified by a total ordering of all its edges and vertices, with permutations assigning a $\pm$ weight depending on the parity of $d$. We will fix this ordering to be of the following form: if the $\times$-source is denoted by $s$ (with incident edge $e$) and the open outputs are labeled $o_1,\dots,o_\ell$, going \emph{clockwise starting from the right}, we will always put this orientation in the form
\[ \pm (o_1\ o_2\ \dots\ o_\ell\ \dots\ e\ s) \]

Recall also that each ribbon quiver $\vec\Gamma$ has a homological degree which depends on $d$, given by the formula
\[ \hspace{-1cm} \deg_d(\Gamma) =  \sum_{v \neq s, o_1,\dots,o_n} ((2-d) out(v) + d + in(v) -4) \]

\begin{example}
Here are some examples of ribbon quivers appearing in $\scrT_{(2)}$ and $\scrT_{(3)}$, respectively: 
\[ \Gamma = \tikzfig{
	\node [inner sep=0pt] (x) at (0,0) {$\times$};
	\node [bullet] (n) at (0,1) {};
	\node [bullet] (s) at (0,-1) {};
	\node [bullet] (ne) at (0.7,0.7) {};
	\node [bullet] (e) at (1,0) {};
	\node [bullet] (w) at (-1,0) {};
	\draw [->-,shorten <=-3.5pt] (x) to (ne);
	\draw [->-=1] (w) to (-2,0);
	\draw [->-=1] (e) to (2,0);
	\draw [->-] (0,1) arc (90:45:1);
	\draw [->-] (0.7,0.7) arc (45:0:1);
	\draw [->-] (0,1) arc (90:180:1);
	\draw [->-] (0,-1) arc (-90:-180:1);
	\draw [->-] (0,-1) arc (-90:0:1);
}\qquad \Gamma' = 
\tikzfig{
	\node [inner sep=0pt] (x) at (0,0) {$\times$};
	\node [bullet,label={$v$}] (nw) at (-0.5,0.87) {};
	\node [bullet] (se) at (0.5,-0.87) {};
	\node [bullet] (sw) at (-0.5,-0.87) {};
	\node [bullet,label={[xshift=6pt]$w$}] (e) at (1,0) {};
	\node [bullet] (w) at (-1,0) {};
	\draw [->-,shorten <=-3.5pt] (x) to (e);
	\draw [->-=1] (nw) to (-1,1.74);
	\draw [->-=1] (sw) to (-1,-1.74);
	\draw [->-=1] (e) to (2,0);
	\draw [->-] (-0.5,0.87) arc (120:0:1);
	\draw [->-] (-1,0) arc (180:120:1);
	\draw [->-] (-1,0) arc (180:240:1);
	\draw [->-] (0.5,-0.87) arc (-60:-120:1);
	\draw [->-] (0.5,-0.87) arc (-60:0:1);
}\]
When $d=0$, the quivers of degree zero only have the input $\times$, the outputs, and vertices either with two inputs and one output, or with zero inputs and one output. Also, the degree of some quiver is equal to how many edges were contracted to obtain it from any degree zero quiver; for example, for the examples above $\deg_0(\Gamma) = 0$ and $\deg_0(\Gamma') = 2$, since two vertices have degree one (marked $v$ and $w$). For any $d$, these degrees get shifted to $\deg_d(\Gamma) = -2d$ and $\deg_d(\Gamma') = -3d+2$.
\end{example}

Thus picking an integer $d$, we can define the space $\scrT^d_{(\ell)}$ of $d$-oriented tube quivers, which as a vector space is equal to $\scrT_{(\ell)}$ but has a grading depending on $d$.

We consider then inserting a Hochschild chain into the $\times$-vertex and also any numbers of incoming $A[1]$ arrows, in between the $\ell$ outgoing legs. Now, given \emph{any} element $m = m_{(1)} + m_{(2)} + \dots \in C^*_{[d]}(A)$, evaluating this oriented ribbon quiver we then get $\ell$ outgoing $A$ factors. This evaluation gives a linear morphism of graded vector spaces
\[ E: \scrT^d_{(\ell)} \otimes CC^-_*(A) \longrightarrow C^*_{(\ell)}(A) \]
For general $m$, $E$ has no reason to commute with any differentials whatsoever. We now analyze some natural differentials on the space $\scrT^d_{(\ell)}$.

\subsection{Differentials on tube quivers}

\subsubsection{The chain boundary differential}
We first have the differential $\del$ defined by summing over vertex separations. This is the boundary operator on chains, and has homological degree of $-1$, as usual. The evaluation of a ribbon quiver is compatible with this differential, that is,
\[ E: (\scrT^d_{(\ell)},\del) \otimes (C_*(A),b) \longrightarrow (C^*_{(\ell)}(A), [\mu,-]_\nec) \]
is a map of cochain complexes (where we regard everything with \emph{cohomological} grading), and so descends to a map in cohomology.

\begin{proposition}\label{prop:circle}
	When $d=0$, the complex $(\scrT^d_{(\ell)},\del)$ calculates the homology of the circle:
	\[ H_*(\scrT^{d=0}_{(\ell)},\del) = H_*(S^1) \]
	that is, $\kk$ in homological degrees zero and one. The same holds for other values of $d$, but with a shift:
	\[ H_{* + \ell d}(\scrT^d_{(\ell)},\del) = H_*(S^1) \]
\end{proposition}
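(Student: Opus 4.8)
The plan is to set up an explicit deformation retraction, or a spectral-sequence/filtration argument, identifying $(\scrT^d_{(\ell)}, \partial)$ with a chain complex whose homology is manifestly that of $S^1$. Since the boundary $\partial$ sums over vertex separations (the reverse of edge contractions), I would first reduce to the case $d=0$: the degree formula $\deg_d(\Gamma) = \deg_0(\Gamma) - \ell d$ given in the excerpt shows that changing $d$ only shifts the grading uniformly by $-\ell d$ on the whole complex, without changing the underlying vector space or the differential $\partial$; hence it suffices to prove $H_*(\scrT^0_{(\ell)},\partial) = H_*(S^1)$ and then apply the shift $H_{*+\ell d} = H_*$.

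First I would describe the degree-zero part of the complex concretely. As the preceding example recalls, in $d=0$ the degree-zero quivers have only the closed input $\times$, the $\ell$ open outputs, and internal vertices that are either binary (two inputs, one output) or sources (zero inputs, one output); the homological degree counts how many edges have been contracted relative to such a maximal quiver. The key geometric input is that these tube quivers are combinatorial models for genus-zero surfaces with two boundary components — one carrying the closed input, one carrying the $\ell$ open outputs. Such a surface is an annulus (a cylinder), and the moduli/configuration space of the relevant framed-boundary cylinder is homotopy equivalent to $S^1$ (recording the relative rotational position of the two boundary circles, i.e. where the closed input sits relative to the ordered outputs). My strategy is therefore to exhibit $(\scrT^0_{(\ell)},\partial)$ as computing the cellular or simplicial chains of this rotational $S^1$.

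Concretely I would organize the quivers by the cyclic position of the incoming edge $e$ at the $\times$-source relative to the ordered outputs $o_1,\dots,o_\ell$, exploiting the normal-form orientation $\pm(o_1\,o_2\,\cdots\,o_\ell\,\cdots\,e\,s)$ fixed earlier. Separating vertices either moves the closed input past an output (the $S^1$-direction, giving the degree-one generator and its boundary relations) or merely refines the internal tree structure on the output side (which should be acyclic and contractible, contributing nothing to homology). I would make this precise by filtering the complex by the number of internal binary vertices and running the associated spectral sequence, or equivalently by constructing an explicit contracting homotopy on the "tree part" that collapses all the internal combinatorics, leaving a two-term complex $\kk \xrightarrow{0} \kk$ in degrees zero and one that yields $H_*(S^1)$.

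The main obstacle I expect is the contractibility of the internal/tree part: I must show that all the homology is carried by the single rotational circle and that separating the binary output-vertices in all possible ways produces an acyclic subcomplex (away from the two surviving classes). This is essentially the statement that the space of ways to merge the $\ell$ ordered outputs into the cylinder, at fixed rotational position, is contractible — a trees-are-contractible argument — but getting the signs and the interaction with the $d$-orientation exactly right, and checking that the boundary class $\partial(\text{degree-one generator})$ vanishes in homology rather than giving a nontrivial relation, is where the real care is needed.
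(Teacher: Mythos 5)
Your proposal is correct in its geometric content and lands on exactly the same picture as the paper, but it takes a more self-contained route. The paper's proof of \cref{prop:circle} is essentially a citation: the $d=0$ case is a special case of \cite[Thm.60]{kontsevich2021precalabiyau}, which identifies $(\scrT^{d=0}_{(\ell)},\del)$ with the chains on a cell complex dual to the cell decomposition of the moduli space $S^1\times(\RR_{>0})^{2\ell-1}$ given by the tube quivers themselves (the $2\ell-1$ positive parameters being edge lengths, the $S^1$ being the direction in which the edge out of the $\times$-vertex lands). You instead propose to verify the homotopy type directly, by collapsing the internal tree combinatorics via a filtration or contracting homotopy and exhibiting a residual two-term complex $\kk\xrightarrow{0}\kk$ recording the cyclic position of the source edge. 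That is a legitimate alternative and would make the proposition independent of the earlier theorem; the price is that the ``trees are contractible'' step, which you correctly flag as the real work, is precisely what the cited theorem packages up, and your sketch leaves it unexecuted. Note also that the arc position of $e$ relative to the outputs is not preserved by $\del$ (separating a vertex where $e$ and an output tree meet moves $e$ into an adjacent arc), so the structure you want is a cell/wall decomposition of the circle direction rather than a filtration in the strict sense.

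One genuine inaccuracy to repair in the reduction to $d=0$: it is true that $\deg_d(\Gamma)=\deg_0(\Gamma)-\ell d$ uniformly on $\scrT_{(\ell)}$ (since $V-E=0$ for a tube quiver), but it is not true that the differential is literally unchanged, because the $d$-orientations weight permutations of edges and vertices by signs depending on the parity of $d$, so the signs in $\del$ do depend on $d$. What must be checked is that the resulting sign twist around the circle of configurations is trivial for every $d$; this is exactly the content of the paper's remark that the twisting line bundle on the moduli space is trivial up to an overall shift of $\ell$. Without that check, a nontrivial twist would kill the homology (local coefficients in the sign representation of $\pi_1(S^1)$), so this point cannot be waved away.
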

\begin{proof}
	The first statement is a special case of \cite[Thm.60]{kontsevich2021precalabiyau}. The ribbon quivers which appear in $\scrT_{(\ell)}$ give a cell decomposition of the space
	\[ ``\cM_{0,2}" \times S^1 \times (\RR_{>0})^{2\ell-1} = S^1 \times (\RR_{>0})^{2\ell-1} \]
	here $``\cM_{0,2}"$ denotes just the point, since the genus zero curve with two punctures has no moduli. As explained in the proof of that theorem, $(\scrT^{d=0},\del)$ is the complex of chains on a cell complex that is dual to the cell decomposition above. 
	
	The ribbon quivers with $d$-degree zero correspond to top-dimensional cells. To see that the identification above is correct, we note that for such a quiver we can independently choose $\ell$ positive number to be the lengths of the outgoing legs, and $\ell-1$ positive numbers to be the lengths of distances between the vertices that are not $s,o_1,\dots,o_\ell$; the last length is fixed by the others. To that we add a circle worth of directions where the edge coming out of the $\times$-vertex $s$ can land.
	
	The latter statement is a variation on the $d=0$ case; conceptually, for different $d$, the result is twisted by powers of a line bundle $\cL$ on this moduli space which is trivial up to a shift of $\ell$.
\end{proof}

\subsubsection{The rotation differential}\label{sec:rotationDiff}
We now introduce another differential, corresponding to rotation around the $S^1$-factor. For that, let us first consider the following decomposition of vector spaces
\[ \scrT_{(\ell)} = (\scrT_{(\ell)})^\mathrm{edge} \oplus (\scrT_{(\ell)})^\mathrm{vertex} \]
where we decompose by what is at the end of the edge $e$ (the edge incident to the $\times$-vertex). The subspace $(\scrT_{(\ell)})^\mathrm{edge}$ is spanned by ribbon quivers where $e$ ends on a trivalent vertex with two incoming and one outgoing edge and the subspace $(\scrT_{(\ell)})^\mathrm{vertex}$ is spanned by the other ribbon quivers.

The names come from thinking of each tube ribbon graph without the source vertex; each is a circle with trees attached to the outside. To the interior of the circle we add the source $\tikzfig{\node [inner sep=0pt] (a) at (0,0) {$\times$};
\draw [->-, shorten <=-3.5pt] (a) to (1,0);}$; this arrow can either land on some edge, giving a tube quiver in $(\scrT_{(\ell)})^\mathrm{edge}$, or on an already-existing vertex, giving a tube quiver in $(\scrT_{(\ell)})^\mathrm{vertex}$.

We now define a map $R:(\scrT^d_{(\ell)})^\mathrm{edge} \to (\scrT^d_{(\ell)})^\mathrm{vertex}$ of homological degree $+1$ by the following prescription. Let $\vec\Gamma$ be some tube quiver in $(\scrT_{(\ell)})^\mathrm{edge}$; its edge $e$ lands at a vertex $v$ of either of the two forms:
\[
\text{Type } (1) \quad \tikzfig{
	\node [bullet,label={[xshift=6pt]$v$}] (v) at (0,0) {};
	\node [inner sep=0,label=below:{$s$}] (x) at (-0.7,-0.7) {$\times$};
	\draw [->-,shorten <=-3.5pt] (x) to node [auto,swap] {$e$} (v);
	\draw [->-] (-0.7,0.3) arc (90:45:1) node [midway] {$a$};
	\draw [->-] (0,0) arc (45:0:1) node [midway] {$b$};
} \qquad \qquad \text{Type } (2) \quad \tikzfig{
	\node [bullet,label={[xshift=6pt]$v$}] (a) at (0,0) {};
	\node [inner sep=0,label=below:{$s$}] (x) at (-0.7,-0.7) {$\times$};
	\draw [->-,shorten <=-3.5pt] (x) to node [auto,swap] {$e$} (v);
	\draw [->-] (0,0) arc (45:90:1) node [midway,swap] {$b$};
	\draw [->-] (0.3,-0.7) arc (0:45:1) node [midway,swap] {$a$};
}\]

Let us pick a $d$-orientation on $\vec\Gamma$ by some ordering of the form
\[ (b\ v\ \dots\  o_1\ o_2\ \dots\ o_\ell\ \dots\ a\ \dots\ e\ s) \]
that is, putting the letters $bv$ in the beginning.

We now consider all the ribbon quivers $\vec\Gamma_{v'}$ we get by sending the edge $e$ to land in all the other vertices of the circle $v'$ that are not $v$. The result doesn't have the vertex $v$ anymore, and the edge where it previously was is now denoted $a$.
\begin{definition}
	The \emph{rotation differential} $R$ is defined on the subspace $(\scrT^d_{(\ell)})^\mathrm{edge}$ by
	\begin{align*} 
		\hspace{-1cm}  R ( \vec\Gamma,(b\ v\ \dots\ &o_1\ o_2\ \dots\ o_\ell\ \dots\ a\ \dots\ e\ s) )  = \\
		&\sum_{v \neq v' \text{ in circle}} \left( \vec\Gamma_{v'}, (-1)^{\ell + \#} (o_1\ o_2\ \dots\ o_\ell\ \dots\ a\ \dots\ e\ s)  \right)
	\end{align*}
	where in the exponent of the sign, $\# = 0$ if the vertex $v$ is of type (1) above, or $\#=1$ if it is of type (2) above. We extend by zero from the subspace $(\scrT^d_{(\ell)})^\mathrm{edge}$ to a map $R: \scrT^d_{(\ell)} \to \scrT^d_{(\ell)}$, of homological degree +1.
\end{definition}
In other words, under the direct sum decomposition, $R$ is given by a strictly triangular matrix $\begin{pmatrix} 0 & 0 \\ R & 0 \end{pmatrix}$; it is evident that $R^2=0$. We check that the assignment of the orientation above is coherent with respect to the change of ordering, and does define a map from oriented ribbon quivers.

\begin{example}
	Let us compute the differential for the tube quiver with orientation $(\Gamma,\cO)$ given by 
	\[\left( \tikzfig{
		\node [inner sep=0pt,label={$s$}] (x) at (0,0) {$\times$};
		\node at (2,0.3) {$o_1$};
		\node at (-2,0.3) {$o_2$};
		\node [bullet,label=above:{$v_4$}] (n) at (0,1) {};
		\node [bullet,label=below:{$v_2$}] (s) at (0,-1) {};
		\node [bullet,label={[xshift=6pt,yshift=-2pt]$v_5$}] (ne) at (0.7,0.7) {};
		\node [bullet,label=left:{$v_1$}] (e) at (1,0) {};
		\node [bullet,label=right:{$v_3$}] (w) at (-1,0) {};
		\draw [->-,shorten <=-3.5pt] (x) to node [auto] {$e$} (ne);
		\draw [->-=1] (w) to node [auto] {$e_4$}(-2,0);
		\draw [->-=1] (e) to node [auto,swap] {$e_1$} (2,0);
		\draw [->-] (0,1) arc (90:45:1)  node [midway,xshift=-4pt] {$e_6$};
		\draw [->-] (0.7,0.7) arc (45:0:1)  node [midway] {$e_7$};
		\draw [->-] (0,1) arc (90:180:1)  node [midway,swap] {$e_5$};
		\draw [->-] (0,-1) arc (-90:-180:1)  node [midway] {$e_3$};
		\draw [->-] (0,-1) arc (-90:0:1) node [midway,swap] {$e_2$};
	},\ (o_1\ o_2\ v_1\ v_2\ v_3\ v_4\ v_5\ e_1\ e_2\ e_3\ e_4\ e_5\ e_6\ e_7\ e\ s) \right)\]
To calculate the signs in $R(\Gamma,\cO)$, we first bring the pair $e_7\ v_5$ to the beginning of the string, getting a sign $(-1)^{6\times(d-1) + 6\times d} = +1$ from the permutation (recall that swapping two edges gives $(-1)^{d-1}$ and two vertices, $(-1)^d$). For the orientation of the terms in $R(\Gamma, \cO)$ we just take this sign into consideration and delete the pair $e_7\ v_5$, getting:
\begin{align*}
	R(\Gamma, \cO) &= \tikzfig{
	\node [inner sep=0pt,label={$s$}] (x) at (0,0) {$\times$};
	\node at (2,0.3) {$o_1$};
	\node at (-2,0.3) {$o_2$};
	\node [bullet,label=above:{$v_4$}] (n) at (0,1) {};
	\node [bullet,label=below:{$v_2$}] (s) at (0,-1) {};
	\node [bullet,label={[xshift=+5pt]$v_1$}] (e) at (1,0) {};
	\node [bullet,label=right:{$v_3$}] (w) at (-1,0) {};
	\draw [->-,shorten <=-3.5pt] (x) to node [auto,swap] {$e$} (e);
	\draw [->-=1] (w) to node [auto] {$e_4$}(-2,0);
	\draw [->-=1] (e) to node [auto,swap] {$e_1$} (2,0);
	\draw [->-] (0,1) arc (90:0:1)  node [midway,xshift=-4pt] {$e_6$};
	\draw [->-] (0,1) arc (90:180:1)  node [midway,swap] {$e_5$};
	\draw [->-] (0,-1) arc (-90:-180:1)  node [midway] {$e_3$};
	\draw [->-] (0,-1) arc (-90:0:1) node [midway,swap] {$e_2$};
} \quad + \quad \tikzfig{
\node [inner sep=0pt,label=below:{$s$}] (x) at (0,0) {$\times$};
\node at (2,0.3) {$o_1$};
\node at (-2,0.3) {$o_2$};
\node [bullet,label=above:{$v_4$}] (n) at (0,1) {};
\node [bullet,label=below:{$v_2$}] (s) at (0,-1) {};
\node [bullet,label=left:{$v_1$}] (e) at (1,0) {};
\node [bullet,label=right:{$v_3$}] (w) at (-1,0) {};
\draw [->-,shorten <=-3.5pt] (x) to node [auto] {$e$} (n);
\draw [->-=1] (w) to node [auto] {$e_4$}(-2,0);
\draw [->-=1] (e) to node [auto,swap] {$e_1$} (2,0);
\draw [->-] (0,1) arc (90:0:1)  node [midway,xshift=-4pt] {$e_6$};
\draw [->-] (0,1) arc (90:180:1)  node [midway,swap] {$e_5$};
\draw [->-] (0,-1) arc (-90:-180:1)  node [midway] {$e_3$};
\draw [->-] (0,-1) arc (-90:0:1) node [midway,swap] {$e_2$};
} \\
&+ \quad \tikzfig{
	\node [inner sep=0pt,label=below:{$s$}] (x) at (0,0) {$\times$};
	\node at (2,0.3) {$o_1$};
	\node at (-2,0.3) {$o_2$};
	\node [bullet,label=above:{$v_4$}] (n) at (0,1) {};
	\node [bullet,label=below:{$v_2$}] (s) at (0,-1) {};
	\node [bullet,label=left:{$v_1$}] (e) at (1,0) {};
	\node [bullet,label={[xshift=-5pt]$v_3$}] (w) at (-1,0) {};
	\draw [->-,shorten <=-3.5pt] (x) to node [auto,swap] {$e$} (w);
	\draw [->-=1] (w) to node [auto] {$e_4$}(-2,0);
	\draw [->-=1] (e) to node [auto,swap] {$e_1$} (2,0);
	\draw [->-] (0,1) arc (90:0:1)  node [midway,xshift=-4pt] {$e_6$};
	\draw [->-] (0,1) arc (90:180:1)  node [midway,swap] {$e_5$};
	\draw [->-] (0,-1) arc (-90:-180:1)  node [midway] {$e_3$};
	\draw [->-] (0,-1) arc (-90:0:1) node [midway,swap] {$e_2$};
} \quad + \quad
\tikzfig{
	\node [inner sep=0pt,label=above:{$s$}] (x) at (0,0) {$\times$};
	\node at (2,0.3) {$o_1$};
	\node at (-2,0.3) {$o_2$};
	\node [bullet,label=above:{$v_4$}] (n) at (0,1) {};
	\node [bullet,label=below:{$v_2$}] (s) at (0,-1) {};
	\node [bullet,label=left:{$v_1$}] (e) at (1,0) {};
	\node [bullet,label=right:{$v_3$}] (w) at (-1,0) {};
	\draw [->-,shorten <=-3.5pt] (x) to node [auto,swap] {$e$} (s);
	\draw [->-=1] (w) to node [auto] {$e_4$}(-2,0);
	\draw [->-=1] (e) to node [auto,swap] {$e_1$} (2,0);
	\draw [->-] (0,1) arc (90:0:1)  node [midway,xshift=-4pt] {$e_6$};
	\draw [->-] (0,1) arc (90:180:1)  node [midway,swap] {$e_5$};
	\draw [->-] (0,-1) arc (-90:-180:1)  node [midway] {$e_3$};
	\draw [->-] (0,-1) arc (-90:0:1) node [midway,swap] {$e_2$};
}\quad ,
\end{align*}
all with orientation $+(o_1\ o_2\ v_1\ v_2\ v_3\ v_4\ e_1\ e_2\ e_3\ e_4\ e_5\ e_6\ e_7\ e\ s)$.
\end{example}
\vspace{1cm}
A direct calculation using the sign conventions for $\del$ and $R$ shows that they graded-commute with each other, that is,
\[ \del R + R \del = 0 \]
Therefore $(\scrT^d_{(\ell)}, \del, R)$ has the structure of a \emph{mixed complex}, or equivalently, a complex with a chain-level action of the circle.

\subsection{Cyclicity and negative cyclic homology}
Let us describe how the tube quivers with differentials $\del$ and $R$ interact with the Hochschild and Connes differential on Hochschild chains. We write just $\Gamma$ for some oriented tube quiver $(\Gamma,\cO)$, and will only specify the orientation when actually necessary.

\subsubsection{Cyclic complex of tube quivers}
Let $u$ be a variable of homological degree $-2$.
\begin{definition}
	For a fixed $\ell, d$, the \emph{cyclic complex of tube quivers} is the $\kk[u]$-module
	\[ \mathscr{CT}^d_{(\ell)} :=  \scrT^d_{(\ell)} \otimes_\kk \kk[u,u^{-1}]/k[u] \]
	together with the differential $\del - uR$.
\end{definition}

That is, an element of homological degree $n$ in $\mathscr{CT}^d_{(\ell)}$ is given by an expression
\[ \vec\Gamma = \vec\Gamma^0 + \vec\Gamma^1 u^{-1} + \vec\Gamma^2 u^{-1} \]
where $\vec\Gamma^i$ is an element of $\scrT^d_{(\ell)}$ of homological degree $n-2i$.

\begin{proposition}\label{prop:homologyPoint}
	Up to a shift, the complex $\mathscr{CT}^d_{(\ell)}$ calculates the homology of the point. That is,
	\[ H_{\ell d}(\mathscr{CT}^d_{(\ell)},\del-uR) = \kk \]
	and $H_i(\mathscr{CT}^d_{(\ell)},\del) = 0$ for $i \neq \ell d$.
\end{proposition}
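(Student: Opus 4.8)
The plan is to read $\mathscr{CT}^d_{(\ell)}$ as the cyclic (homotopy-orbit) complex of the mixed complex $(\scrT^d_{(\ell)},\del,R)$ and to compute its homology by the spectral sequence of the filtration by powers of $u$, feeding in \cref{prop:circle}. Conceptually the statement is the algebraic avatar of a geometric fact: $(\scrT^d_{(\ell)},\del)$ computes $H_{*-\ell d}(S^1)$, the operator $R$ implements rotation of this circle, and the homotopy orbits of the free rotation action of $S^1$ on itself are contractible; so one expects the answer to be $\kk$ in a single degree.

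Concretely I would filter $\mathscr{CT}^d_{(\ell)}$ by the number of negative powers of $u$; recall from the expansion displayed above that a general element reads $\sum_{i\ge 0}\vec\Gamma^i u^{-i}$, so the underlying $\kk$-module is $\scrT^d_{(\ell)}\otimes\kk[u^{-1}]$. Since $\del$ preserves this filtration while $uR$ lowers it by one, the associated graded differential is $\del$, and \cref{prop:circle} identifies the first page as
\[ E_1 \;=\; H_*(\scrT^d_{(\ell)},\del)\otimes_\kk\kk[u^{-1}]. \]
Writing $a$ for the generator of $H_{\ell d}(\scrT^d_{(\ell)},\del)$ (the image of $H_0(S^1)$) and $b$ for the generator of $H_{\ell d+1}(\scrT^d_{(\ell)},\del)$ (the image of $H_1(S^1)$), the class $a\,u^{-i}$ sits in homological degree $\ell d+2i$ and $b\,u^{-i}$ in degree $\ell d+1+2i$, for $i\ge 0$; the induced differential $d_1$ is $-u$ times the map that $R$ induces on $\del$-homology.

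The heart of the argument, and the step I expect to be the main obstacle, is to show that $R$ induces on $\del$-homology the Connes isomorphism $B\colon H_0(S^1)\xrightarrow{\ \sim\ }H_1(S^1)$; equivalently that $R(a)=\pm b$. Everything hinges on this being nonzero: were $R$ to act trivially, the homology would be the full tower $H_*(S^1)\otimes\kk[u^{-1}]$ rather than a point. I would verify $R(a)=\pm b$ by unwinding the cell structure from the proof of \cref{prop:circle}: represent $a$ by a top-dimensional tube quiver whose $\times$-edge $e$ lands on one fixed edge of the central circle, and observe, straight from the definition of the rotation differential, that $R$ sums over sending $e$ to every other position around that circle, i.e. sweeps the representative once around the $S^1$-factor. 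This is exactly the local picture illustrated in the worked example of $R$ above, and the resulting cycle represents the fundamental class $b$. Once $R(a)=\pm b$ is established, $R(b)=R^2(a)=0$ is automatic from $R^2=0$.

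Granting this, $d_1$ is a staircase isomorphism: $d_1(a\,u^{-i})=\mp b\,u^{-(i-1)}$ for every $i\ge 1$, while $d_1(a)=0$ because its would-be target $b\,u^{1}$ lies outside $\kk[u^{-1}]$, and $d_1(b\,u^{-i})=0$. Consequently every $b$-generator is simultaneously a cycle and a boundary and cancels, every $a\,u^{-i}$ with $i\ge 1$ fails to be closed, and the unique surviving class is $a=a\,u^{0}$ in homological degree $\ell d$. Hence $E_2=\kk$, concentrated in degree $\ell d$; being one-dimensional it admits no further differentials, and since for fixed $\ell,d$ the degrees in $\scrT^d_{(\ell)}$ are bounded below, each total degree meets only finitely many powers of $u$, so the filtration is bounded and the spectral sequence converges. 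This yields $H_{\ell d}(\mathscr{CT}^d_{(\ell)},\del-uR)=\kk$ together with the vanishing of all other homology groups.
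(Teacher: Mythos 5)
Your argument is correct, and it reduces to exactly the same crux as the paper's proof, but the bookkeeping around that crux is organized differently. The paper first reduces to $d=0$ (where $\mathscr{CT}^d_{(\ell)}$ is non-negatively graded), runs the Connes/SBI long exact sequence of the mixed complex $(\scrT^d_{(\ell)},\del,R)$ in low degrees to get $H_0(\mathscr{CT})=\kk$ and to reduce everything to the vanishing of $H_1(\mathscr{CT})$, and then invokes precisely your key fact --- that the generator of $H_{\ell d+1}(\scrT^d_{(\ell)},\del)$ lies in the image of $R$ --- to kill that group; the remaining degrees follow from periodicity and the twisting argument handles general $d$. You instead run the spectral sequence of the $u$-adic filtration, identify $E_1=H_*(\scrT^d_{(\ell)},\del)\otimes\kk[u^{-1}]$ via \cref{prop:circle}, and observe that once $[R](a)=\pm b$ the $d_1$ staircase cancels everything except $a\,u^0$; boundedness of $\scrT^d_{(\ell)}$ in each degree gives convergence. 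The two mechanisms are the standard two packagings of the same computation (the exact sequence is what your spectral sequence encodes), so nothing essential is gained or lost; if anything, your version works uniformly in $d$ without the reduction to $d=0$, and your justification of the pivotal claim $[R](a)=\pm b$ --- sweeping the $\times$-edge once around the central circle of a generic tube quiver, which is legitimate because a single bottom-degree quiver is automatically a $\del$-cycle representing the point class, whence $R$ of it is a cycle representing the fundamental class of the $S^1$-factor --- is spelled out in more detail than the paper's one-line assertion of the same fact. One small caveat: you should make sure the degree conventions you adopt ($a$ in degree $\ell d$, $b$ in degree $\ell d+1$, generic quivers at the bottom) are applied consistently, since the paper itself wavers between $\ell d$ and $-\ell d$ in different places; this does not affect the validity of your argument.
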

\begin{proof}
	Let us show the case $d=0$; as in \cref{prop:circle}, the general case follows from that one by twisting with a line bundle that is trivial up to an overall shift. When $d=0$, $\mathscr{CT}^d_{(\ell)}$ is a non-negatively graded mixed complex. So the Connes long exact sequence in low degrees splits as
	\[ 0 \to H_2(\mathscr{CT}^d_{(\ell)},\del-uR) \to H_0(\mathscr{CT}^d_{(\ell)},\del-uR) \to H_1(\scrT^{d=0}_{(\ell)},\del) \to H_1(\mathscr{CT}^d_{(\ell)},\del-uR) \to 0 \]
	and
	\[ 0 \to H_0(\scrT^{d=0}_{(\ell)},\del) \to H_0(\mathscr{CT}^d_{(\ell)},\del-uR) \to 0 \]
	Thus $H_0(\mathscr{CT}^d_{(\ell)},\del-uR) = \kk$, and since $H_1(\scrT^d_{(\ell)},\del) = \kk$ it is enough to calculate that
	\[ H_1(\mathscr{CT}^d_{(\ell)},\del-uR) = 0 \]
	This follows from the fact that the nontrivial class in $H_1(\scrT^d_{(\ell)},\del)$ can be represented by a chain in the image of $R$.
\end{proof}

\subsubsection{Action on negative cyclic homology}
Recall that the negative cyclic homology of $(A,\mu)$ can be computed by the complex
\[ CC^-_*(A) = (C_*(A)[[u]], b + uB) \]
where $b$ is the Hochschild differential of homological degree $-1$ (depends on $\mu$) and $B$ is the Connes differential of homological degree $+1$ (does not depend on $\mu$).

Suppose now that we are given a pre-CY structure $m$. We extend the evaluation map $E$ to a map of $\kk[u]$-modules
\[ E|_{u^{-1}=0}: \mathscr{CT}^d_{(\ell)} \otimes CC^-_*(A) \to C^*_{(\ell)}(A) \]
by taking the part of degree zero in $u$, that is, by adding all the evaluations $\vec\Gamma^i(\lambda_i)$. For ease of notation let us also denote this map by $E$.

\begin{proposition}
	The map $E|_{u^{-1}=0}$ is compatible with the differentials, and descends to a map in co/homology
	\[ H_*(\mathscr{CT}^d_{(\ell)}, \del-uR) \otimes HC^-_*(A) \to H^*_{(\ell)}(A). \]
\end{proposition}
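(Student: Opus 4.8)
The plan is to verify that $E|_{u^{-1}=0}$ is a strict morphism of complexes from $\mathscr{CT}^d_{(\ell)} \otimes CC^-_*(A)$, equipped with the total differential $(\del-uR)\otimes 1 + 1\otimes(b+uB)$, to $(C^*_{(\ell)}(A),[m,-]_\nec)$. Once this is established, descent to cohomology is automatic, and since by \cref{prop:homologyPoint} the left tensor factor has homology $\kk$ concentrated in degree $\ell d$, the induced map takes exactly the stated form.

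First I would unwind the chain-map equation by extracting the coefficient of $u^0$. Writing $\vec\Gamma = \sum_i\vec\Gamma^i u^{-i}$ and $\lambda=\sum_j\lambda_j u^j$, the definition $E|_{u^{-1}=0}(\vec\Gamma\otimes\lambda)=\sum_i E(\vec\Gamma^i\otimes\lambda_i)$ pairs components diagonally, whereas the $R$- and $B$-parts of the differential shift this pairing by one level. Because the components $\vec\Gamma^i$ and $\lambda_j$ are mutually independent, matching the coefficient of each bilinear type $(\vec\Gamma^a,\lambda_b)$ shows that the chain-map property is equivalent to two separate identities, valid for every single tube quiver $\Gamma$ and Hochschild chain $c$:
\[ [m, E(\Gamma\otimes c)]_\nec = E((\del\Gamma)\otimes c) + (-1)^{|\Gamma|} E(\Gamma\otimes (bc)), \qquad E((R\Gamma)\otimes c) = \pm\, E(\Gamma\otimes (Bc)). \]
The first is the diagonal identity, expressing compatibility with $\del$ and $b$; the second, off-diagonal identity expresses that the rotation differential on quivers corresponds, under evaluation, to the Connes differential on chains.

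For the first identity I would argue diagrammatically, following \cite[Sec.6]{kontsevich2021precalabiyau}: the necklace bracket $[m,-]_\nec$ grafts one extra structure map $m$ onto the diagram for $E(\Gamma\otimes c)$ in all admissible positions, and these contributions reorganize into the vertex separations computing $E(\del\Gamma\otimes c)$ together with the action of $m$ on the closed input, which is $E(\Gamma\otimes bc)$. The terms in which two structure maps collide assemble into evaluations of $[m,m]$ and vanish because $m$ is a pre-CY structure; this is precisely what upgrades the target differential from the $\del$-$b$ compatibility stated above (with target $[\mu,-]_\nec$) to the full $[m,-]_\nec$. The second identity is the heart of the argument: I would track the closed-input edge $e$ emanating from the source $s$ as it sweeps across the vertices of the central circle, which is exactly the combinatorial move defining the rotation differential $R$, and check that the resulting sum equals the evaluation against the cyclically rotated chain $Bc$.

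The main obstacle is this second identity together with all of its signs. One must reconcile the $\ell$- and type-dependent signs in the definition of $R$, the Koszul signs coming from the standard $d$-orientation $\pm(o_1\,\dots\,o_\ell\,\dots\,e\,s)$ as the source edge is moved, and the sign and unit conventions entering the Connes operator $B$ on the reduced complex $C_*(A)$; the graded-commutativity $\del R + R\del = 0$ established above serves as a useful consistency check. Granting the two identities, reassembling the $u^0$-coefficient yields the chain-map property of $E|_{u^{-1}=0}$, and passing to cohomology produces the stated map $H_*(\mathscr{CT}^d_{(\ell)},\del-uR)\otimes HC^-_*(A)\to H^*_{(\ell)}(A)$.
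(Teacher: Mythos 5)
Your overall strategy is the paper's: reduce the chain-map property of $E|_{u^{-1}=0}$ to two bilinear identities, quote the already-established compatibility of $E$ with $\del$, $b$ and $[\mu,-]_\nec$ for the diagonal one, and prove the off-diagonal identity $E(R\vec\Gamma,\lambda)=\pm E(\vec\Gamma,B\lambda)$ by sweeping the source edge around the circle. But one step as written would fail: the target complex here is $(C^*_{(\ell)}(A),[\mu,-]_\nec)$, not $(C^*_{(\ell)}(A),[m,-]_\nec)$. The operator $[m_{(k)},-]_\nec$ for $k\ge 2$ sends $C^*_{(\ell)}(A)$ to $C^*_{(\ell+k-1)}(A)$, so $[m,-]_\nec$ is not even an endomorphism of $C^*_{(\ell)}(A)$, and $H^*_{(\ell)}(A)$ in the statement is cohomology for $[\mu,-]_\nec$. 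Compatibility with the full $[m,-]_\nec$ is the content of the \emph{next} proposition and requires enlarging the source differential to $\del-uR+\del_2+\del_3+\cdots$. Your paragraph about collision terms assembling into $[m,m]$ is relevant, but to the justification of the quoted $\del$--$b$--$[\mu,-]_\nec$ compatibility (the quiver vertices are filled with the $m_{(k)}$, and $[m,m]=0$ is what makes vertex separation match the $[\mu,-]_\nec$ differential), not to an ``upgrade'' of the target differential.

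For the $R$--$B$ identity you correctly isolate the sweep of the edge $e$ as the combinatorial content, but the mechanism that equates the evaluation against $B\lambda$ with that sweep is strict unitality: $B$ inserts the unit $1_A$ into the rotated chain, and since $\mu^2(1_A,a)=(-1)^{\bar a+1}\mu^2(a,1_A)=a$, feeding this unit into the trivalent vertex at the end of $e$ (the defining feature of the subspace $(\scrT^d_{(\ell)})^{\mathrm{edge}}$ on which $R$ is supported) collapses that vertex and redistributes the source arrow over the remaining vertices of the circle. Without naming this absorption, the claim that the sum over landing positions equals the evaluation against $Bc$ is an assertion rather than an argument. With these two repairs the proposal coincides with the paper's proof.
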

\begin{proof}
	It is enough to prove that for any $\vec\Gamma \in \scrT^d_{(\ell)}$ and $\lambda \in C_*(A)$, we have
	\[ [\mu,E(\vec\Gamma,\lambda)]_\nec = E((\del-uR)\vec\Gamma,\lambda) + (-1)^{\deg(\vec\Gamma)} E(\vec\Gamma,(b+uB)\lambda). \]
	But we already know that
	\[ [\mu,E(\vec\Gamma,\lambda)]_\nec = E(\del\vec\Gamma,\lambda) + (-1)^{\deg(\vec\Gamma)} E(\vec\Gamma,b\lambda), \]
	so it remains to prove that $E(R\vec\Gamma,\lambda) = (-1)^{\deg(\vec\Gamma)}E(\vec\Gamma, B\lambda)$. This follows from the fact that the unit of $A$ satisfies
	\[ \mu^2(1_A,a) = (-1)^{\bar a+1}\mu^2(a,1_A) = a \]
	for all $a$, so inputting the result of $B$ into a ribbon quiver in $(\scrT^d_{(\ell)})^\mathrm{edge}$ has the same result as redistributing that arrow around the other vertices of the cycle. We check that the signs in the differentials give the correct relation.	
\end{proof}

\subsubsection{Rotation invariant tube quivers}
For any $d$, we have the dimension $d$-action of $\ZZ_\ell$ on the higher Hochschild cochains $C^*_{(\ell)}(A)$, as defined in \cite[Sec.3.1]{kontsevich2021precalabiyau}: in this definition, the rotation of an angle $2\pi/\ell$ of the vertex comes with the Koszul sign together with an extra sign $(d-1)(\ell-1)$. The invariants under this action, when properly shifted, define what we called the dimension $d$ higher cyclic cochains:
\[ C_{(\ell, d)}^*(A) := (C_{(\ell)}^*(A))^{(\ZZ_\ell,d)}[(d-2)(\ell-1)] \]

We now define this action analogously on the other side:
\begin{definition}
	The dimension $d$-action of $\ZZ/\ell$ on the complex $\scrT^d_{(\ell)}$ is given by rotating the quiver by an angle of $+2\pi/\ell$, together with cyclically permuting the output vertices in the orientation, sending
	\[ (o_1\ o_2\ \dots\ o_\ell\ \dots\ e\ s) \mapsto (o_\ell\ o_1\ o_2\ \dots\ o_{\ell-1}\ \dots\ e\ s).  \]
\end{definition}
This action extends $k[u]$-linearly to $\cC\cT^d_{(\ell)}$, and we denote
\[ \mathscr{CT}_{(\ell,d)} := (\mathscr{CT}^d_{(\ell)})^{(\ZZ/\ell,d)}[(d-2)(\ell-1)]\]
for its shifted invariants. We also put all of these complexes for $\ell \ge 2$ together, into a complex
\[ \mathscr{CT}_{[d]} := \prod_{\ell \ge 2} \mathscr{CT}_{(\ell,d)} \]

Note that since vertices enter in the orientation with weight $(d-1)$, the sign of this permutation in the orientations is $(-1)^{(d-1)(\ell-1)}$, already accounting for the extra sign of the dimension $d$ action. We also calculate that this action commutes with the differentials $\del$ and $R$, so the map $E|_{u^{-1}=0}$ restricts to $\cC\cT_{(\ell,d)}$, and gives a map of graded vector spaces
\[ \mathscr{CT}_{[d]} \otimes CC^-_*(A) \to C^*_{[d]}(A). \]

Recall that given a pre-CY structure $m$ on $A$, the necklace bracket $[m,-]_\nec$ defines a differential on $C^*_{[d]}(A)$; by definition this differential is a sum
\[ [m,-]_\nec = d_\mu + [m_{(2)},-]_\nec +  [m_{(3)},-]_\nec + \dots \]
where $d_\mu$ is just the differential on (usual) Hochschild cochains. Each term $[m_{(k)},-]_\nec$ maps $C^*_{(\ell,d)}(A) \to C^*_{(\ell + k-1,d)}(A)[1]$.

We mimic this differential on the tube quiver side, by defining differentials
\[ \del_k: \mathscr{CT}_{(\ell,d)} \to \mathscr{CT}_{(\ell+k-1,d)} \]
by taking the necklace bracket of the tube quiver with a vertex of $k$ outgoing legs. After checking all the signs and degrees, we conclude that:
\begin{proposition}\label{prop:Ecompatible}
	The map $E|_{u^{-1}=0}$ is compatible with the differential $(\del-uR + \del_2 + \del_3 + \dots)$ on $\mathscr{CT}_{[d]}$, and gives a map in co/homology
	\[ H_*(\mathscr{CT}_{[d]}, \del - uR + \del_2 + \del_3 + \dots) \otimes HC^-_*(A) \to C^*_{[d]}(A). \]
\end{proposition}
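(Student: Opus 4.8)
The plan is to reduce everything to a single chain-level intertwining relation and then match it term by term against the decomposition $[m,-]_\nec = d_\mu + \sum_{k\ge 2}[m_{(k)},-]_\nec$. Concretely, I want to show that for every $\vec\Gamma \in \mathscr{CT}_{[d]}$ and every $\lambda \in CC^-_*(A)$,
\[ [m, E(\vec\Gamma,\lambda)]_\nec = E\big( (\del - uR + {\textstyle\sum_{k\ge 2}}\del_k)\,\vec\Gamma,\ \lambda \big) + (-1)^{\deg(\vec\Gamma)}\, E\big(\vec\Gamma,\ (b+uB)\lambda\big). \]
Once this is in hand, $E|_{u^{-1}=0}$ is by definition a map of complexes from $\big(\mathscr{CT}_{[d]}\otimes CC^-_*(A),\ (\del-uR+\sum_k\del_k)\otimes 1 + 1\otimes(b+uB)\big)$ to $\big(C^*_{[d]}(A),[m,-]_\nec\big)$, and therefore descends to the claimed map in cohomology.

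The term of the relation coming from $m_{(1)}=\mu$ is nothing new: it is exactly the preceding proposition, which already supplies
\[ [\mu, E(\vec\Gamma,\lambda)]_\nec = E((\del - uR)\vec\Gamma,\lambda) + (-1)^{\deg(\vec\Gamma)}\,E(\vec\Gamma,(b+uB)\lambda). \]
So the real content is, for each fixed $k\ge 2$, the identity $[m_{(k)}, E(\vec\Gamma,\lambda)]_\nec = E(\del_k\vec\Gamma,\lambda)$. By the very definition of $\del_k$ as the necklace bracket of the tube quiver with the $k$-output corolla, this is the assertion that the evaluation $E$ \emph{intertwines} the combinatorial necklace bracket on ribbon quivers with the algebraic necklace bracket on $C^*_{[d]}(A)$: gluing the corolla into $\vec\Gamma$ in all admissible ways and then evaluating produces the same sum of insertions as first evaluating and then composing the result with $m_{(k)}$ cyclically. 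This is a direct consequence of the PROP-action formalism of \cite[Sec.6]{kontsevich2021precalabiyau}, where the necklace bracket is precisely defined through such gluings. I would then also record the elementary facts that each $\del_k$ is $\kk[u]$-linear, raises the output count $\ell \mapsto \ell + k - 1$ in lockstep with $[m_{(k)},-]_\nec$, and respects the dimension-$d$ $\ZZ/\ell$-action, so that it indeed descends to $\del_k:\mathscr{CT}_{(\ell,d)}\to\mathscr{CT}_{(\ell+k-1,d)}$; summing over $k\ge 1$ and over $\ell$ then yields the displayed relation on all of $\mathscr{CT}_{[d]}\otimes CC^-_*(A)$. To make the target a genuine complex I would record that $D:=\del-uR+\sum_{k\ge 2}\del_k$ squares to zero: writing $\del_1:=\del-uR$, this amounts to the identities $\sum_{j+k=n}\del_j\del_k = 0$ for $n\ge 2$, whose case $n=2$ is $(\del-uR)^2=0$ (from $\del^2=R^2=0$ and $\del R+R\del=0$) and whose higher cases are the quiver-level shadow of the graded Jacobi identity for the necklace bracket, i.e. of the Maurer-Cartan equation $[m,m]=0$ satisfied by the pre-CY structure.

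The main obstacle is, unsurprisingly, the sign bookkeeping — exactly the ``checking all the signs and degrees'' flagged in the statement. Although every step above is \emph{morally} a tautology about the PROP action, the two sides carry very different sign data: on the quiver side a $d$-orientation is a total ordering of edges and vertices, put in the normal form $\pm(o_1\,\dots\,o_\ell\,\dots\,e\,s)$, carrying the rotation sign $(-1)^{\ell+\#}$ and the invariance shift $[(d-2)(\ell-1)]$, whereas on the cochain side $[m_{(k)},-]_\nec$ carries the Koszul signs of the necklace bracket together with the extra $(d-1)(\ell-1)$ twist built into the dimension-$d$ cyclic action. The crux is to show these agree termwise under $E$; in particular that the cyclic permutation $(o_1\,\dots\,o_\ell)\mapsto(o_\ell\,o_1\,\dots\,o_{\ell-1})$ defining the $\ZZ/\ell$-action matches the cyclic symmetry of $[m_{(k)},-]_\nec$, so that each $\del_k$ is well defined on invariants and the intertwining survives the passage to $\mathscr{CT}_{(\ell,d)}$.
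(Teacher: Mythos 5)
Your proposal is correct and follows essentially the same route as the paper, which offers no written argument beyond ``after checking all the signs and degrees'': your chain-level identity $[m,E(\vec\Gamma,\lambda)]_\nec = E((\del-uR+\sum_k\del_k)\vec\Gamma,\lambda) + (-1)^{\deg\vec\Gamma}E(\vec\Gamma,(b+uB)\lambda)$, split into the $\mu$-part (the preceding proposition) and the term-by-term matching of $[m_{(k)},-]_\nec$ with $\del_k$ via compatibility of evaluation with necklace gluing, is precisely that check made explicit. Your observation that one must also verify $D^2=0$ for the homology in the statement to make sense is a point the paper leaves implicit, and is a welcome addition; the only caution is that the identities $\sum_{j+k=n}\del_j\del_k=0$ should be organized so that the cross-terms $\{\del-uR,\del_k\}$ absorb the vertex separations of the $k$-corolla produced inside $\del$, rather than each summand vanishing on its own.
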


\subsection{Defining the Legendre transform}
\subsubsection{The fiberwise derivative}\label{sec:fiberwise}
From the proposition above, given any closed class in $\mathscr{CT}_{[d]}$, any negative cyclic homology class, and a pre-CY structure $m$, we produce another element $n$ of $C^*_{[d]}(A)$ satisfying the equation $[m,n]_\nec = 0$.

This will play the role of the \emph{fiberwise derivative} in the usual Legendre transform. Recall from \cref{prop:tangentLegendre} that the fiberwise derivative can be understood as the variational derivative of the Legendre transform; thus in the noncommutative case it is natural that it would land in the tangent complex $(C^*_{[d]}(A), [m,-]_\nec)$, which calculates the tangent space of $\cM_\mathrm{pre-CY}$ at the point $m$.

\begin{proposition}\label{prop:extend}
	Any $\del$-closed element of cohomological degree $2d$ in $\mathscr{CT}^d_{(2)}$, invariant under the $(\ZZ_2,d)$ action, extends to a $(\del-uR+\del_2+\del_3+\dots)$-closed element of cohomological degree $d+2$ in $\mathscr{CT}_{[d]}$.
\end{proposition}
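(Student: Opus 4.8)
The plan is to construct the extension by an iterative lift, building the components $\Gamma_{(\ell)}\in\mathscr{CT}_{(\ell,d)}$ one value of $\ell$ at a time so that the total element $\Gamma=\sum_{\ell\ge 2}\Gamma_{(\ell)}$ is annihilated by the full differential $D=(\del-uR)+\del_2+\del_3+\cdots$. Writing $D=D_0+\sum_{k\ge 2}\del_k$, with $D_0=\del-uR$ the $\ell$-preserving part and each $\del_k$ raising $\ell$ by $k-1$, the equation $D\Gamma=0$ decomposes, in its $\mathscr{CT}_{(\ell,d)}$-component, as
\[ (\del-uR)\,\Gamma_{(\ell)} \;=\; -\sum_{k=2}^{\ell-1}\del_k\,\Gamma_{(\ell-k+1)} \;=:\; O_{(\ell)}, \]
whose right-hand side involves only the previously constructed $\Gamma_{(2)},\dots,\Gamma_{(\ell-1)}$. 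So, granting the base case, the induction reduces to showing that $O_{(\ell)}$ is a $D_0$-coboundary at each stage.

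First I would record that $D^2=0$ (this is exactly what makes $D$ a differential, as already used in \cref{prop:Ecompatible}). The usual diagram chase then shows that once the lower components have been arranged to vanish, the obstruction cochain $O_{(\ell)}$ is automatically $D_0$-closed: extracting the $\mathscr{CT}_{(\ell,d)}$-component of $D(D\Gamma^{\le\ell-1})=0$, where $\Gamma^{\le\ell-1}$ is the partial sum, leaves precisely $(\del-uR)O_{(\ell)}=0$. Hence $[O_{(\ell)}]$ is a well-defined class in $H^{*}(\mathscr{CT}_{(\ell,d)},\del-uR)$, and I must show it vanishes.

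The engine is \cref{prop:homologyPoint}: the homology of $(\mathscr{CT}^d_{(\ell)},\del-uR)$ is that of a point, concentrated in the single homological degree $\ell d$. Since we work in characteristic zero, averaging over $\ZZ/\ell$ is exact, so $H^{*}(\mathscr{CT}_{(\ell,d)},\del-uR)$ is also concentrated in one degree — after the shift $[(d-2)(\ell-1)]$, in cohomological degree $-2d\ell+2\ell+d-2$. On the other hand, since the desired $\Gamma$ has total cohomological degree $d+2$, every $\Gamma_{(\ell)}$ has degree $d+2$, and as $D$ raises cohomological degree by one, $O_{(\ell)}$ sits in degree $d+3$. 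The obstruction can thus be nonzero only if these coincide, i.e. if $d+3=-2d\ell+2\ell+d-2$, which rearranges to
\[ 2\ell(1-d)=5. \]
The left side is even and $5$ is odd, so this never holds: the obstruction degree always misses the unique degree carrying homology, forcing $[O_{(\ell)}]=0$. Choosing a primitive — invariant, again by exactness of averaging — yields $\Gamma_{(\ell)}$ and closes the induction.

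The step I expect to be the real obstacle is the base case $\ell=2$, where $O_{(2)}$ is the empty sum and the equation instead demands that the given $\del$-closed $\Gamma_{(2)}$ be upgraded to a $(\del-uR)$-closed element of $\mathscr{CT}_{(2,d)}$ — that is, a Hochschild-type cycle must be lifted to a cyclic one by adding higher powers of $u^{-1}$. This is governed not by \cref{prop:homologyPoint} but by \cref{prop:circle}, whose $\del$-homology is that of $S^1$ and hence concentrated in homological degrees $2d$ and $2d+1$. Here I would run the analogous degree count: the defect $R\Gamma_{(2)}$ produced by the bottom $u^{-1}$-term lands in homological degree $-2d-1$, which (for integer $d$) is neither $2d$ nor $2d+1$, so it is $\del$-exact and the correction can be made; the fact that $\Gamma_{(2)}$ is naturally supported at the bottom of the cyclic complex, where $uR$ falls into the discarded $u^{0}$-part, makes this upgrade especially clean. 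Once the base case is secured, the uniform parity obstruction above runs the induction to completion, and $\Gamma=\Gamma_{(2)}+\Gamma_{(3)}+\cdots$ is the desired $D$-closed extension. The only genuinely delicate bookkeeping is keeping the orientations, signs, and $\ZZ/\ell$-invariance consistent through the lift, which is where most of the effort will go.
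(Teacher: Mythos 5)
Your proposal is correct and follows essentially the same route as the paper: induction on $\ell$, closedness of the obstruction via $D^2=0$, vanishing of the obstruction class because its degree misses the single degree in which the homology of $(\mathscr{CT}^d_{(\ell)},\del-uR)$ is concentrated (\cref{prop:homologyPoint}), and $\ZZ/\ell$-averaging for invariance. The one slip is in your fallback degree count for the base case ($R$ has homological degree $+1$, so $R\Gamma_{(2)}$ sits in degree $-2d+1$, which \emph{does} carry $\del$-homology by \cref{prop:circle}), but this is moot since, as you also observe, $uR\Gamma_{(2)}$ is already killed in $\kk[u,u^{-1}]/\kk[u]$ -- which is exactly why the paper treats $\ell=2$ as immediate.
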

\begin{proof}
	We prove this by induction in the number $\ell$ of outgoing legs. Let us say that we have an extension
	\[ \Gamma_{(2)} + \Gamma_{(3)} + \dots + \Gamma_{(\ell)} \]
	where $\Gamma_{(\ell)} \in \mathscr{CT}_{(\ell,d)}$. Unraveling the definitions and degree shifts, it means we have
	\begin{align*}
		\Gamma_{(2)} &= \Gamma_{(2)}^0 \\
		\Gamma_{(3)} &= \Gamma_{(3)}^0 + \Gamma_{(3)}^1 u^{-1} \\
		\dots \\
		\Gamma_{(\ell)} &= \Gamma_{(\ell)}^0 +  \Gamma_{(\ell)}^1 u^{-1} +\dots + \Gamma_{(\ell)}^{\ell-2} u^{-\ell+2}
	\end{align*}
	where $\Gamma_{(k)}^i$ is of homological degree $-dk+2k-4+2i$ in $\scrT^d_{(k)}$; the terms above are the only ones that can be nonzero for degree reasons.
	
	Suppose that we have
	\[ (\del-uR+\del_2+\dots)(\Gamma_{(2)} + \Gamma_{(3)} + \dots + \Gamma_{(\ell)}) = 0 \]
	up to terms with $\ell$ outgoing legs. The next equation to be solved, with exactly $\ell+1$ outgoing legs, is to find
	\[\Gamma_{(\ell+1)} = \Gamma_{(\ell+1)}^0 + \Gamma_{(\ell+1)}^1 u^{-1} +\dots + \Gamma_{(\ell+1)}^{\ell-1} u^{-\ell+1}\]
	solving
	\[ (\del - uR)\Gamma_{(\ell+1)} = \del_2\Gamma_{(\ell)} + \dots + \del_{\ell}\Gamma_{(2)} \]
	From the fact that $(\del-uR)^2$, and using the induction hypothesis, we find that $(\del-uR)$ applied to the right-hand side gives zero. But since this equation is in homological degree $-d(\ell+1)+2\ell-2 > -d(\ell + 1)$, due to \cref{prop:homologyPoint} it must have a solution for $\Gamma_{(\ell+1)}$; using symmetrization under the $(\ZZ_{\ell+1},d)$ action we get the desired $\Gamma_{(\ell+1)}$.
\end{proof}

Therefore, all we need to do is to specify a $\del$-closed element $\Gamma_{(2)}^0$ of $\scrT^d_{(2)}$ which is $\ZZ/2$-invariant if $d$ is odd and anti-invariant if $d$ is even.

\begin{definition}
	We define the following element of homological degree $-2d$ in $\scrT^d_{(2)}$.
	\[ \Gamma_{(2)} = \frac{1}{2} \left( \tikzfig{
		\node [inner sep=0pt,label=below:{$s$}] (x) at (0,0) {$\times$};
		\node at (2,0.3) {$o_1$};
		\node at (-2,0.3) {$o_2$};
		\node [bullet,label=above:{$v_4$}] (n) at (0,1) {};
		\node [bullet,label=below:{$v_2$}] (s) at (0,-1) {};
		\node [bullet,label={[xshift=6pt,yshift=-2pt]$v$}] (ne) at (0.7,0.7) {};
		\node [bullet,label=left:{$v_1$}] (e) at (1,0) {};
		\node [bullet,label=right:{$v_3$}] (w) at (-1,0) {};
		\draw [->-,shorten <=-3.5pt] (x) to node [auto] {$e$} (ne);
		\draw [->-=1] (w) to node [auto] {$e_4$}(-2,0);
		\draw [->-=1] (e) to node [auto,swap] {$e_1$} (2,0);
		\draw [->-] (0,1) arc (90:45:1)  node [midway,xshift=-4pt] {$e_6$};
		\draw [->-] (0.7,0.7) arc (45:0:1)  node [midway] {$b$};
		\draw [->-] (0,1) arc (90:180:1)  node [midway,swap] {$e_5$};
		\draw [->-] (0,-1) arc (-90:-180:1)  node [midway] {$e_3$};
		\draw [->-] (0,-1) arc (-90:0:1) node [midway,swap] {$e_2$};
	} \quad + \quad \tikzfig{
	\node [inner sep=0pt,label={$s$}] (x) at (0,0) {$\times$};
	\node at (2,0.3) {$o_1$};
	\node at (-2,0.3) {$o_2$};
	\node [bullet,label=above:{$v_4$}] (n) at (0,1) {};
	\node [bullet,label=below:{$v_2$}] (s) at (0,-1) {};
	\node [bullet,label={[xshift=-5pt,yshift=-15pt]$v$}] (sw) at (-0.7,-0.7) {};
	\node [bullet,label=left:{$v_1$}] (e) at (1,0) {};
	\node [bullet,label=right:{$v_3$}] (w) at (-1,0) {};
	\draw [->-,shorten <=-3.5pt] (x) to node [auto] {$e$} (sw);
	\draw [->-=1] (w) to node [auto] {$e_4$}(-2,0);
	\draw [->-=1] (e) to node [auto,swap] {$e_1$} (2,0);
	\draw [->-] (0,1) arc (90:0:1)  node [midway] {$e_6$};
	\draw [->-] (-0.7,-0.7) arc (-135:-180:1)  node [midway,yshift=3pt] {$b$};
	\draw [->-] (0,1) arc (90:180:1)  node [midway,swap] {$e_5$};
	\draw [->-] (0,-1) arc (-90:-135:1)  node [midway,xshift=4pt] {$e_3$};
	\draw [->-] (0,-1) arc (-90:0:1) node [midway,swap] {$e_2$};
	}\right) \]
	both quivers with orientation $\ (o_1\ o_2\ v_1\ v_2\ v_3\ v_4\ v\ e_1\ e_2\ e_3\ e_4\ e_5\ e_6\ b\ e\ s)$.
\end{definition}

Note that $\Gamma_{(2)}$ is of top cohomological degree $+2d$, so $\del\Gamma_{(2)} = 0$. To know that it defines an element of $\mathscr{CT}_{[d]}$ (of cohomological degree $+2d-(d-2)(2-1) = d+2$), we must check that the generator of $\ZZ/2$ acts on it with a sign $(-1)^{(d-1)(2-1)} = (-1)^{d-1}$; to see that we calculate the sign of the permutation of the sequence $(v_1\ v_2\ v_3\ v_4\ v\ e_1\ e_2\ e_3\ e_4\ e_5\ e_6\ b\ e\ s)$ induced by the 180-degree rotation; note that the labels $o_1$ and $o_2$ because we still want to read the outputs of these quivers starting from the right.

\begin{definition}\label{def:gammaQuiver}
	We define $\Gamma = \Gamma_{(2)} + \Gamma_{(3)} + \dots$ to be the element of cohomological degree $d+2$ of $\mathscr{CT}_{[d]}$ given by some fixed extension of $\Gamma_{(2)}$.
\end{definition}
Note that $\Gamma$ is only defined up to some $(\del -uR + \del_2 + \del_3 + \dots)$-exact term.

\begin{lemma}
	For any $\ell > 2$, the class of $\Gamma^{\ell-2}_{(\ell)}$ in $H_{-\ell d}(\scrT^d_{\ell}, \del)$ is nonzero.
\end{lemma}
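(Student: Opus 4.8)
The plan is to peel off the top power of $u^{-1}$ from the closedness equation $(\del - uR + \del_2 + \del_3 + \cdots)\Gamma = 0$ satisfied by the extension of \cref{def:gammaQuiver}, and then to run an induction on $\ell$ feeding off the computation of $H_*(\scrT^d_{(\ell)},\del)$ in \cref{prop:circle}. First I would collect, in the closedness equation, all terms with exactly $\ell$ outgoing legs; since $\del$ and $R$ preserve the number of legs while each $\del_k$ raises it by $k-1$, this isolates
\[ (\del - uR)\Gamma_{(\ell)} + \del_2\Gamma_{(\ell-1)} + \del_3 \Gamma_{(\ell-2)} + \cdots + \del_{\ell-1}\Gamma_{(2)} = 0. \]
Extracting the coefficient of $u^{-(\ell-2)}$, the top power occurring in $\Gamma_{(\ell)}$, annihilates every $\del_k\Gamma_{(\ell-k+1)}$ term (already $\Gamma_{(\ell-1)}$ only reaches $u^{-(\ell-3)}$) and also the $R$-contribution (since $\Gamma_{(\ell)}^{\ell-1}=0$), leaving $\del\Gamma_{(\ell)}^{\ell-2}=0$. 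Thus $\Gamma_{(\ell)}^{\ell-2}$ is a genuine $\del$-cycle of homological degree $-\ell d$ and defines a class in $H_{-\ell d}(\scrT^d_{(\ell)},\del)\cong\kk$.

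Next I would extract the coefficient of $u^{-(\ell-3)}$ from the same equation. Only the $R$-term, the $\del$-term and $\del_2\Gamma_{(\ell-1)}$ contribute in this power, giving the key relation
\[ R\,\Gamma_{(\ell)}^{\ell-2} = \del\,\Gamma_{(\ell)}^{\ell-3} + \del_2\,\Gamma_{(\ell-1)}^{\ell-3}, \]
so that in $\del$-homology one has $[R\,\Gamma_{(\ell)}^{\ell-2}] = [\del_2\,\Gamma_{(\ell-1)}^{\ell-3}]$ inside $H_{-\ell d+1}(\scrT^d_{(\ell)},\del)$.

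The induction then runs by contraposition. If $\Gamma_{(\ell)}^{\ell-2}$ were $\del$-exact, say $\Gamma_{(\ell)}^{\ell-2}=\del\eta$, then from $\del R + R\del = 0$ we get $R\,\Gamma_{(\ell)}^{\ell-2} = -\del(R\eta)$, whence $[\del_2\,\Gamma_{(\ell-1)}^{\ell-3}]=0$. Therefore it suffices to prove that $[\del_2\,\Gamma_{(\ell-1)}^{\ell-3}]\neq 0$, and this is exactly where the hypothesis enters: $\Gamma_{(\ell-1)}^{\ell-3}=\Gamma_{(\ell-1)}^{(\ell-1)-2}$ is the bottom-degree term of the $(\ell-1)$-complex, which by the inductive hypothesis generates the summand $H_{-(\ell-1)d}(\scrT^d_{(\ell-1)},\del)\cong\kk$. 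Separating $u$-powers in $D^2=0$ shows that $\del_2$ anticommutes with both $\del$ and $R$, so $\del_{2,*}$ is a well-defined map on $\del$-homology; what remains is that it does not annihilate this generator. The seed $\ell=2$ (that $\Gamma_{(2)}$ itself generates $H_{-2d}(\scrT^d_{(2)},\del)$) is a finite check on tube quivers with two legs in bottom degree, and the first inductive step $\ell=3$ can be verified by computing $\del_2\Gamma_{(2)}$ directly from the two explicit quivers defining $\Gamma_{(2)}$ and checking that the output is a nonzero class of $H_{-3d+1}(\scrT^d_{(3)},\del)$.

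The main obstacle is the general nonvanishing of $\del_{2,*}$ on the bottom generator. Conceptually this is transparent from the cell-complex picture of \cref{prop:circle}: the operator $\del_2$ grafts a new trivalent output onto the cycle, and the necklace bracket sums over all positions at which this output is attached around the circle; as the attachment point runs once around the cycle it sweeps out the fundamental $1$-cycle, carrying the ``$H_0(S^1)$'' point-class in degree $-(\ell-1)d$ to the ``$H_1(S^1)$'' fundamental class in degree $-\ell d+1$. Making this rigorous means showing that the signed sum of insertions telescopes to the generator rather than to a boundary, and I expect the sign bookkeeping — reconciling the orientation conventions for $\del_2$, for $R$, and for the $d$-orientations of the swept quivers — to be the genuinely delicate part. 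As a more robust alternative finish, one can invoke the Connes exact sequence exactly as in the proof of \cref{prop:homologyPoint}, where the generator of $H_{-\ell d+1}$ is exhibited in the image of $R_*$; this makes $R_*\colon H_{-\ell d}\to H_{-\ell d+1}$ an isomorphism, so the key relation reduces the whole statement to the nonvanishing of $R_*^{-1}\circ\del_{2,*}$ on the explicit bottom generators, which can be pinned down combinatorially.
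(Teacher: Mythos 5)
Your reduction is exactly the one the paper makes: extracting the top and next-to-top powers of $u^{-1}$ from the closedness equation gives $\del\Gamma^{\ell-2}_{(\ell)}=0$ and $[R\,\Gamma^{\ell-2}_{(\ell)}]=[\del_2\Gamma^{\ell-3}_{(\ell-1)}]$, and the contrapositive via $\del R+R\del=0$ then reduces everything to the nonvanishing of $[\del_2\Gamma^{\ell-3}_{(\ell-1)}]$ in $H_{-\ell d+1}(\scrT^d_{(\ell)},\del)$. Up to that point the argument is correct and matches the paper.

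The gap is that you never actually prove this nonvanishing, and it is the entire content of the lemma. You name it yourself as ``the main obstacle'' and offer two sketches: a geometric sweep picture whose ``sign bookkeeping'' you defer, and a reduction via the Connes sequence to the nonvanishing of $R_*^{-1}\circ\del_{2,*}$ on the bottom generator --- which, since $R_*$ is an isomorphism $H_{-\ell d}\to H_{-\ell d+1}$ here, merely restates the problem rather than solving it. The paper's missing ingredient is an explicit \emph{cocycle evaluation}: define a $\ZZ/2$-valued cochain $\alpha$ in degree $-\ell d+1$ by $\alpha(X)=1$ iff the vertex of the circle closest to the first output $o_1$ is directly attached to the source $s$, check by enumerating the possible contracted edges that $\alpha$ is a cocycle, and observe that exactly one term of $\del_2\Gamma^{\ell-3}_{(\ell-1)}$ has the source aligned with $o_1$, so the pairing is $1$. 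Note that working with $\ZZ/2$ coefficients is precisely what disposes of the orientation bookkeeping you were worried about; since a class that pairs nontrivially with a mod-$2$ cocycle cannot be a boundary, this suffices. Without this (or an equivalent rigorous computation that $\del_{2,*}$ carries the bottom generator of $H_{-(\ell-1)d}(\scrT^d_{(\ell-1)},\del)$ to a generator of $H_{-\ell d+1}(\scrT^d_{(\ell)},\del)$), the proof is incomplete. Your degree bookkeeping establishing $\del\Gamma^{\ell-2}_{(\ell)}=0$ and the base case $\ell=2$ are fine and consistent with \cref{prop:circle} and \cref{def:gammaQuiver}.
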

\begin{proof}
	Note that for degree reasons, the term $\Gamma^{\ell-2}_{(\ell)}$ is the lowest homological degree term that appears in $\Gamma_{(\ell)}$. For $\ell = 2$ it is the only one; and the claim follows from the fact that we can use the $\del$-differential to move the edge $e$ around the circle, giving the following expression for $\Gamma_{(2)}$ up to a $\del\text{-exact term}$:
	\[ \left( \tikzfig{
		\node [inner sep=0pt,label=below:{$s$}] (x) at (0,0) {$\times$};
		\node at (2,0.3) {$o_1$};
		\node at (-2,0.3) {$o_2$};
		\node [bullet,label=above:{$v_4$}] (n) at (0,1) {};
		\node [bullet,label=below:{$v_2$}] (s) at (0,-1) {};
		\node [bullet,label={[xshift=6pt,yshift=-2pt]$v$}] (ne) at (0.7,0.7) {};
		\node [bullet,label=left:{$v_1$}] (e) at (1,0) {};
		\node [bullet,label=right:{$v_3$}] (w) at (-1,0) {};
		\draw [->-,shorten <=-3.5pt] (x) to node [auto] {$e$} (ne);
		\draw [->-=1] (w) to node [auto] {$e_4$}(-2,0);
		\draw [->-=1] (e) to node [auto,swap] {$e_1$} (2,0);
		\draw [->-] (0,1) arc (90:45:1)  node [midway,xshift=-4pt] {$e_6$};
		\draw [->-] (0.7,0.7) arc (45:0:1)  node [midway] {$b$};
		\draw [->-] (0,1) arc (90:180:1)  node [midway,swap] {$e_5$};
		\draw [->-] (0,-1) arc (-90:-180:1)  node [midway] {$e_3$};
		\draw [->-] (0,-1) arc (-90:0:1) node [midway,swap] {$e_2$};
	}, (o_1\ o_2\ v_1\ v_2\ v_3\ v_4\ v\ e_1\ e_2\ e_3\ e_4\ e_5\ e_6\ b\ e\ s) \right)  \]
	In other words, $\Gamma_{(2)}$ is the class of $\pm$ one point in the moduli space.
	
	For any $\ell \ge 3$, from expanding out the equation $(\del-uR +\del_2+\dots)\Gamma$, we see that $\Gamma^{\ell-2}_{(\ell)}$ is defined by solving the equation
	\[ \del \Gamma^{\ell-3}_{(\ell)} - R \Gamma^{\ell-2}_{(\ell)} = - \del_2 \Gamma^{\ell-3}_{(\ell-1)} \]
	so to prove the statement it is sufficient to show the class
	\[ [\del_2 \Gamma^{\ell-3}_{(\ell-1)}] \in H_{-\ell d+1}(\scrT^d_{(\ell)},\del) \]
	is nonzero. The easiest way to show this is to explicitly write down a $\ZZ/2$-valued cocycle on $\scrT^d_{(\ell)}$ that evaluates to 1 when paired with $\del_2 \Gamma^{\ell-3}_{(\ell-1)}$.
	
	The correct (cohomological) degree for this cocycle $\alpha$ is $-\ell d+1$; and we need to specify how it acts on any tube quiver of the same (homological) degree; these are exactly the quivers of degree one higher than the minimum, that is, with exactly one contracted edge.
	
	For any tube quiver $X$, take the closest vertex in the circle to the first output $o_1$. If this vertex is directly connected to the source vertex $s$, we set $\alpha(X)=1$, otherwise, $\alpha(X) = 0$. By thinking of all the possible kinds of edge that can be contracted from a minimum homological degree quiver, we calculate that $\alpha$ is a cocycle. Evaluating $\alpha$ on $\del_2 \Gamma^{\ell-3}_{(\ell-1)}$ gives one (there is exactly one tube quiver there where the source and the first output `are aligned'). \footnote{In fact, if we wanted to we could have worked with a $\ZZ$-valued cocycle instead, and by being careful with the orientations, prove that not only $[\Gamma^{\ell-2}_{(\ell)}] \neq 0$, but also that it is a generator of $H_{-\ell d}(\scrT^d_{(\ell)},\del) = \ZZ$.}
\end{proof}

\begin{definition}
	The \emph{fiberwise derivative} at the pre-CY structure $m$ is the map
	\[ Fm = E(\Gamma,-)|_{u^{-1}=0}: CC^-_*(A) \to C^*_{[d]}(A)[d+2] \]
\end{definition}
By the closedness of $\Gamma$ under the differential $\del-uR + \sum_i\del_i$ we get that the fiberwise derivative is a map of complexes. For simplicity from now on we denote $\Gamma(\lambda)$ for the evaluation $E(\Gamma,\lambda)|_{u^{-1}=0}$.

\begin{proposition}\label{prop:quasiIso}
	If $m_{(2)}$ is nondegenerate, that is, maps to a quasi-isomorphism of bimodules under $C^*_{(2)}(A) \cong \Hom_{A-A}(A_\Delta,A^!)$, then $Fm$ is a quasi-isomorphism.
\end{proposition}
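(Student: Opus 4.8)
The plan is to prove this by a filtration (spectral sequence) comparison, mirroring the inductive inversion of the odd Legendre transform in \cref{sec:inverting}: there, nondegeneracy of the leading coefficient $\gamma_2$ forced invertibility of the whole fiberwise derivative by a triangular induction, and here the role of $\gamma_2$ is played by $m_{(2)}$. Concretely, I would filter the source $CC^-_*(A)$ by the $u$-adic filtration $F^p = u^p C_*(A)[[u]]$, whose associated graded differential is just the Hochschild $b$, so that its $E_1$-page is $\bigoplus_p HH_*(A)\,u^p$; and I would filter the target $C^*_{[d]}(A)$ by the number of outgoing legs $\ell$, for which $[m,-]_\nec = d_\mu + [m_{(2)},-]_\nec + \dots$ reduces on the associated graded to $d_\mu$, giving $E_1$-page $\prod_\ell HH^*_{(\ell,d)}(A)$.

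First I would check that $Fm = E(\Gamma,-)|_{u^{-1}=0}$ is compatible with these filtrations. Since $\Gamma^i_{(\ell)}$ is nonzero only for $0 \le i \le \ell-2$, an element $\lambda \in F^p$ is sent into the legs $\ell \ge p+2$, so $Fm$ carries $F^p$ into $F^{p+2}$ of the leg-filtration and descends to a map of associated graded complexes (here I use that $E$ is a map of complexes, \cref{prop:Ecompatible}). The induced map on the leading graded piece sends the class $[\lambda_p] \in HH_*(A)$ (in the $u^p$ summand) to $[\Gamma^{\ell-2}_{(\ell)}(\lambda_p)]$ with $\ell = p+2$, the evaluation of the lowest homological degree term of $\Gamma_{(\ell)}$.

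The crux is then to show that this graded map is an isomorphism $HH_*(A) \xrightarrow{\ \sim\ } HH^*_{(\ell,d)}(A)$ for every $\ell$. Two inputs combine here. On the combinatorial side, the preceding Lemma identifies $[\Gamma^{\ell-2}_{(\ell)}]$ as a generator of $H_{-\ell d}(\scrT^d_{(\ell)},\del) \cong \kk$ (cf.\ \cref{prop:circle}), so that evaluating it is, up to the identifications of the graphical calculus, the canonical `one point in the moduli space' operation. On the homological-algebra side, this is where nondegeneracy of $m_{(2)}$ is essential: under $C^*_{(2)}(A) \cong \Hom_{A-A}(A_\Delta,A^!)$ the class $m_{(2)}$ is a quasi-isomorphism $A_\Delta \simeq A^!$, and since $A$ is smooth this lets one replace every $A^!$ factor appearing in the bimodule description of $HH^*_{(\ell,d)}(A)$ by $A_\Delta$, collapsing it to $HH^*(A)$; the same quasi-isomorphism together with the smoothness identification $C_*(A) \simeq \Hom_{A-A}(A^!,A_\Delta)$ identifies $HH_*(A)$ with $HH^*(A)$. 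Matching these two identifications against the evaluation of the generator is what makes the graded map an isomorphism, and it is nondegeneracy (not merely closedness) of $m_{(2)}$ that is consumed.

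I expect the main obstacle to be twofold. First, making the $E_1$-isomorphism precise: one must verify that `evaluate the generator $\Gamma^{\ell-2}_{(\ell)}$' really corresponds, under the dualities above, to applying the nondegenerate $m_{(2)}$ the appropriate number of times, which requires unwinding the graphical calculus exactly as in the Lemma preceding \cref{prop:chainLevelSmoothCY}. Second, convergence: both filtrations involve infinite products (the $u$-adic completion $C_*(A)[[u]]$ and the product $\prod_{\ell}$ over legs), so I would need them to be exhaustive and complete and the associated spectral sequences to converge, after which an isomorphism on $E_1$ yields a genuine quasi-isomorphism by the comparison theorem. Once these are in place the statement follows, since a filtered map inducing an isomorphism on associated graded is a quasi-isomorphism.
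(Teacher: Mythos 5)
Your proposal is correct and follows essentially the same route as the paper: the paper's proof exploits exactly the triangular structure you formalize as a filtration comparison (each $\Gamma_{(\ell)}(\lambda)$ depends only on $\lambda_0,\dots,\lambda_{\ell-2}$, so it suffices to show $\Gamma^{\ell-2}_{(\ell)}(-):(C_*(A),b_\mu)\to(C^*_{(\ell)}(A),[\mu,-]_\nec)$ is a quasi-isomorphism), and it consumes nondegeneracy of $m_{(2)}$ in the same way, identifying both sides with $\Hom_{A\mh A}(A_\Delta,A_\Delta)$ via $A^!\simeq A_\Delta$ and smoothness. Your added attention to completeness/convergence of the two filtrations is a point the paper leaves implicit but does not change the argument.
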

This proposition should be seen a noncommutative version of the implicit function theorem; the element $m_{(2)}$ is the analog of the Jacobian matrix.
\begin{proof}
	As in the proof of the theorem above, for degree reasons, the term of $\Gamma$ with $\ell$ outgoing legs is of the form
	\[ \Gamma_{(\ell)} = \Gamma_{(\ell)}^0 +  \Gamma_{(\ell)}^1 u^{-1} +\dots + \Gamma_{(\ell)}^{\ell-2} u^{-\ell+2} \]
	so for some negative cyclic class $\lambda= \lambda_0 + \lambda_1 u + \lambda_2 u^2 + \dots$ we have
	\[\Gamma_{(\ell)}(\lambda) = \sum_{i=0}^{i=\ell-2} \Gamma_{(\ell)}^i(\lambda_i) \]
	that is, as we increase $\ell$, each new term of $Fm(\lambda)$ depends only on one new term $\lambda_{\ell-2}$. It suffices then to show that the map
	\[ \Gamma_{(\ell)}^{\ell-2}(-): (C_*(A),b_\mu) \to (C^*_{(\ell)}(A),[\mu,-]_\nec) \]
	is a quasi-isomorphism. But recall from \cref{sec:chainLevelNondeg} when $A$ is smooth and $m_{(2)}$ is nondegenerate (that is, gives a quasi-isomorphism $A^! \simeq A_\Delta$), we have the following quasi-isomorphisms
	\begin{align*}
		(C_*(A),b_\mu) &\simeq \Hom_{A-A}(A^!, A_\Delta) \simeq \Hom_{A-A}(A_\Delta, A_\Delta)\\
		(C^*_{(\ell)}(A),[\mu,-]_\nec)  & \simeq \Hom_{A-A}((A^!)^{\otimes_A (k-1)}, A_\Delta) \simeq \Hom_{A-A}(A_\Delta, A_\Delta)
	\end{align*}
	That is, up to quasi-isomorphisms all these invariants are just Hochschild cohomology. We see that all tube quivers of degree $-\ell d$ give cohomologous maps $(C_*(A),b_\mu) \to (C^*_{(\ell)}(A),[\mu,-]_\nec)$: they all involve applying the quasi-isomorphism coming from $m_{(2)}$ $\ell$ times in a sequence, which is the same operation given by the composition of the quasi-isomorphisms above. So the map $\Gamma_{(\ell)}^{\ell-2}(-)$ is cohomologous to (a scalar multiple) of this composition.
\end{proof}

\subsubsection{The energy function and the Legendre transform}
In our discussion of the odd Legendre transform between polyvector fields and forms, we calculated that the correct analog of sending a Lagrangian function $L$ to its energy function $e_L = v_i \del L/\del v_i - L$ was given by sending a polyvector field $\gamma = \gamma_2 + \gamma_3 + \dots$ to the polyvector field
\[ e_\gamma = \gamma_2 + 2 \gamma_3 + 3 \gamma_4 + \dots \]

We make the same definition in the noncommutative case:
\begin{definition}
	The energy function associated to an element $m = \mu + \sum_{\ell \ge 2} m_{(\ell)} \in C^*_{[d]}(A)$ is the element
	\[ e_m =  \sum_{\ell \ge 2} (\ell-1) m_{(\ell)} \in C^*_{[d]}(A). \]
\end{definition}
We calculate that $[m,e_m]_\nec = 0$, that is, this element $e_m$ gives a closed element under the differential on $C^*_{[d]}(A)$.

We then define the Legendre transform:
\begin{definition}\label{def:ncLegendre}
	The noncommutative Legendre transform is the map
	\begin{align*}
		\cL: \left(\cM_{d-\mathrm{preCY}}(A)\right)_\mathrm{nondeg} &\to HC^-_d(A)\\
			m &\mapsto [(Fm)^{-1}(e_m)]
	\end{align*}
	where $\left(\cM_{d-\mathrm{preCY}}(A)\right)_\mathrm{nondeg} \subset C^d_{[d]}(A)$ is the set of $d$-dimensional pre-CY structures $m = \mu + m_{(2)} + m_{(3)}+ \dots \in C^2_{[d]}(A)$ such that $m_{(2)}$ is nondegenerate.
\end{definition}
Note that the map $\cL$ is \emph{not} linear, and strictly speaking, as a map of sets, depends on our choice of quasi-inverse $(Fm)^{-1}$. Nevertheless, in some sense, also like the ordinary Legendre transform on convex functions, it is `one-to-one'; in the next section we explain what this means.

\section{The nondegenerate locus}
We now focus on the case where $A$ is smooth, and continue the study of the nondegenerate locus $\left(\cM_{d-\mathrm{preCY}}(A)\right)_\mathrm{nondeg}$ of the space of pre-CY structures on $A$.

The main result of this section is that the noncommutative Legendre transform $\cL$ we defined above is invertible, and gives an equivalence between nondegenerate pre-CY structures and smooth CY structures. However, this equivalence does not hold in a strict sense; its correct form is as a weak homotopy equivalence of simplicial sets, see later in \cref{sec:simplicial}. Under the assumption that the Hochschild cohomology of $A$ is concentrated in non-negative degree, this equivalence can be more simply phrased in terms of a groupoid of solutions to the Maurer-Cartan equation, which we describe in \cref{sec:groupoid}.

\subsection{Inverting the noncommutative Legendre transform}
We now characterize the image of the noncommutative Legendre transform $\cL$.
\begin{proposition}\label{prop:preToSmooth}
	Every element of $CC^-_d(A)$ in the image of $\cL$ is a smooth CY structure on $A$.
\end{proposition}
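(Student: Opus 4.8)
The plan is to prove nondegeneracy of the leading term $\lambda_0$ of any representative $\lambda = \lambda_0 + \lambda_1 u + \cdots$ of a class $\cL(m) = [(Fm)^{-1}(e_m)]$ in the image of $\cL$. Note first that $\lambda$ is automatically $b_\mu+uB$-closed, being the preimage under the chain map $Fm$ of the $[m,-]_\nec$-closed element $e_m$; and that nondegeneracy of $\lambda_0$ depends only on its class $[\lambda_0] \in \HH_d(A)$, since it is the assertion that the induced bimodule map $A^! \to A_\Delta[-d]$ is a quasi-isomorphism. Hence it suffices to establish nondegeneracy in (co)homology, after which $\lambda$ is a smooth CY structure by definition.

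The heart of the argument is to extract the lowest-order consequence of the relation $Fm(\lambda) = e_m + [m,\xi]_\nec$. Filter $C^*_{[d]}(A)$ by the number $\ell$ of outgoing legs. Since $[m,-]_\nec = d_\mu + \sum_{k\ge 2}[m_{(k)},-]_\nec$ strictly increases $\ell$ except for the term $d_\mu$, which preserves it, the part with $\ell \ge 2$ is a subcomplex; this is exactly the subcomplex into which $Fm$ lands, via $\mathscr{CT}_{[d]} = \prod_{\ell\ge 2}\mathscr{CT}_{(\ell,d)}$, so we may take $\xi$ there as well. On its minimal piece $\ell = 2$ the induced differential is $d_\mu$. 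Projecting the relation to $\ell = 2$ and using that $\Gamma_{(2)}(\lambda) = \Gamma_{(2)}^0(\lambda_0)$ depends only on $\lambda_0$, that $(e_m)_{(2)} = m_{(2)}$, and that the $\ell=2$ component of $[m,\xi]_\nec$ is merely $d_\mu\xi_{(2)}$, I obtain the single equation
\[ [\Gamma_{(2)}^0(\lambda_0)] = [m_{(2)}] \in H^*_{(2)}(A). \]

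To finish, I would feed this into the bimodule interpretation already established in the proof of \cref{prop:quasiIso}. Writing $\psi$ for the bimodule map determined by $\lambda_0$ and $\phi$ for the map determined by $m_{(2)}$, that analysis, specialized to $\ell = 2$, identifies $\Gamma_{(2)}^0(\lambda_0)$, up to coboundary and a nonzero scalar $c$, with the composite obtained by inserting $\phi$ twice around $\psi$. The displayed equation then reads, in the homotopy category of $A$-bimodules, $\phi\psi\phi \simeq c\,\phi$. Since $m_{(2)}$ is nondegenerate, $\phi$ is invertible in that category, so cancelling one copy of $\phi$ shows that $\psi$ is a nonzero scalar multiple of $\phi^{-1}$, hence itself a quasi-isomorphism. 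In particular $\lambda_0$ satisfies the nondegeneracy condition, so $\lambda$ is a smooth CY structure. (Equivalently, one can package this last step through \cref{prop:chainLevelSmoothCY}, taking $\alpha = m_{(2)}$ and reading off the required $\beta \in C^{-1}(A)$ from $\xi$.)

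The main obstacle is precisely the step identifying $\Gamma_{(2)}^0(-)$ with the two-fold insertion of $\phi$ around $\psi$ with the correct variance, so that the factor being cancelled is genuinely the invertible map $\phi$ and not a degenerate composite; this is the diagrammatic bookkeeping of \cref{prop:quasiIso} carried out at $\ell = 2$, and the only real care lies in tracking arrow directions and the harmless scalar. Everything else — closedness of $\lambda$, independence of the choice of representative, and the filtration isolating the $\ell = 2$ equation — is formal.
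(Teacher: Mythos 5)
Your proposal is correct and follows essentially the same route as the paper: isolate the $\ell=2$ component of $e_m = Fm(\lambda)$ to get $[m_{(2)}] = [\Gamma_{(2)}(\lambda_0)]$, identify $\Gamma_{(2)}(\lambda_0)$ with the composite $m_{(2)}\circ\lambda_0\circ m_{(2)}$ of bimodule maps, and cancel one invertible factor $m_{(2)}$ to land on the chain-level nondegeneracy criterion of \cref{prop:chainLevelSmoothCY} with $\alpha = m_{(2)}$. The only cosmetic difference is that the paper performs the cancellation by applying the coevaluation $\kk \to A_\Delta\otimes_{A\mh A}A^!$ and passing to $C^*(A)$, whereas you invert $\phi$ directly in the homotopy category of bimodules.
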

\begin{proof}
	Let $\lambda = \lambda_0 + \lambda_1 u^1+\dots = \cL(m)$ be the image under the Legendre transform of a nondegenerate pre-CY structure $m$. The relation between $\lambda_0 \in C_d(A)$ and $m_{(2)} \in C^d_{(2)}(A)$ is given by
	\[ m_{(2)} = \frac{1}{2} \left( \tikzfig{
		\node [vertex] (x) at (0,0) {$\lambda_0$};
		\node at (2,0.3) {};
		\node at (-2,0.3) {};
		\node [bullet] (n) at (0,1) {};
		\node [bullet] (s) at (0,-1) {};
		\node [bullet] (ne) at (0.7,0.7) {};
		\node [bullet] (e) at (1,0) {};
		\node [bullet] (w) at (-1,0) {};
		\draw [->-] (x) to (ne);
		\draw [->-=1] (w) to (-2,0);
		\draw [->-=1] (e) to (2,0);
		\draw [->-] (0,1) arc (90:45:1);
		\draw [->-] (0.7,0.7) arc (45:0:1);
		\draw [->-] (0,1) arc (90:180:1);
		\draw [->-] (0,-1) arc (-90:-180:1);
		\draw [->-] (0,-1) arc (-90:0:1);
	} +\tikzfig{
		\node [vertex] (x) at (0,0) {$\lambda_0$};
		\node at (2,0.3) {};
		\node at (-2,0.3) {};
		\node [bullet] (n) at (0,1) {};
		\node [bullet] (s) at (0,-1) {};
		\node [bullet] (sw) at (-0.7,-0.7) {};
		\node [bullet] (e) at (1,0) {};
		\node [bullet] (w) at (-1,0) {};
		\draw [->-] (x) to (sw);
		\draw [->-=1] (w) to (-2,0);
		\draw [->-=1] (e) to (2,0);
		\draw [->-] (0,1) arc (90:0:1);
		\draw [->-] (-0.7,-0.7) arc (-135:-180:1);
		\draw [->-] (0,1) arc (90:180:1);
		\draw [->-] (0,-1) arc (-90:-135:1);
		\draw [->-] (0,-1) arc (-90:0:1);
	}\right)\]
	up to a $[\mu,-]_\nec\text{-exact term}$, where as usual we evaluate by inserting $m_{(1)} = \mu$ and $m_{(2)}$ into the $\bullet$ vertices, accordingly, with some sign that we omit. By using the $\del$-differentials we find that
	\[ m_{(2)} = \tikzfig{
		\node [bullet] (v3) at (0,0.8) {};
		\node [vertex] (v2) at (0,0) {$\lambda_0$};
		\node [bullet] (v4) at (0.8,0) {};
		\node [bullet] (v5) at (0,-0.8) {};
		\node [bullet] (v6) at (0,-1.5) {};
		\node [bullet] (v1) at (-1,-1.5) {};
		\draw [->-] (v2) to (v4);
		\draw [->-] (0,0.8) arc (90:0:0.8);
		\draw [->-] (0,0.8) arc (90:270:0.8);
		\draw [->-] (0.8,0) arc (0:-90:0.8);
		\draw [->-] (v5) to (v6);
		\draw [->-] (v1) to (-2.2,-1.5);
		\draw [->-] (v1) to (v6);
		\draw [->-] (v6) to (+1.1,-1.5);
	} + ([\mu,-]_\nec\text{-exact term}) \]
	
	We now use the canonical coevaluation map $\coev: \kk \to A_\Delta \otimes_{A\mh A} A^1$ on both sides, getting an equality in $C^*(A,A^!)$. Because $m_{(2)}$ is nondegenerate, this implies the equality in $C^*(A)$:
	\[ \tikzfig{
		\node [vertex] (v) at (0,0) {$1$};
		\draw [->-] (v) to (0,-1.5);} \quad = \quad
	  \tikzfig{
		\node [bullet] (v3) at (0,0.8) {};
		\node [vertex] (v2) at (0,0) {$\lambda_0$};
		\node [bullet] (v4) at (0.8,0) {};
		\node [bullet] (v5) at (0,-0.8) {};
		\node (v6) at (0,-1.6) {};
		\draw [->-] (v2) to (v4);
		\draw [->-] (0,0.8) arc (90:0:0.8);
		\draw [->-] (0,0.8) arc (90:270:0.8);
		\draw [->-] (0.8,0) arc (0:-90:0.8);
		\draw [->-] (v5) to (v6);
	}  + ([\mu,-]\text{-exact term})  \]
	which is exactly the chain-level description of the nondegeneracy condition on $\lambda_0$.
\end{proof}

So in other words, a nondegenerate pre-CY structure on a smooth $A_\infty$-category gives a smooth CY structure on it. We would like to invert this map. Recall that in \cref{sec:oddLegendre} we described the (odd) commutative version of this inverse, and showed how to calculate the inverse odd Legendre transform by describing it implicitly by the equation it solves.

We now describe the noncommutative analog of this procedure, using the same tube quivers
\[ \Gamma_{(\ell)} = \Gamma^0_{(\ell)} + \Gamma^1_{(\ell)} + \dots + \Gamma^{\ell-2}_{(\ell)} \]
we defined in \cref{def:gammaQuiver}. Suppose that we have a smooth CY structure of dimension $d$ given by an element
\[ \lambda = \lambda_0 + \lambda_1 u + \lambda_2 u^2 + \dots \in CC^-_*(A) \]
As in the proof of \cref{prop:preToSmooth} above, from our chain-level description of nondegeneracy, we know that we can find $\alpha \in C^d_{(2)}(A), \beta_{(2)} \in C^{d-1}_{(2)}(A)$ such that
\begin{equation} \label{eq:alpha}
\alpha = \frac{1}{2} \left( \tikzfig{
	\node [vertex] (x) at (0,0) {$\lambda_0$};
	\node at (2,0.3) {};
	\node at (-2,0.3) {};
	\node [bullet] (n) at (0,1) {};
	\node [bullet] (s) at (0,-1) {};
	\node [bullet] (ne) at (0.7,0.7) {};
	\node [bullet] (e) at (1,0) {};
	\node [bullet] (w) at (-1,0) {};
	\draw [->-] (x) to (ne);
	\draw [->-=1] (w) to (-2,0);
	\draw [->-=1] (e) to (2,0);
	\draw [->-] (0,1) arc (90:45:1);
	\draw [->-] (0.7,0.7) arc (45:0:1);
	\draw [->-] (0,1) arc (90:180:1);
	\draw [->-] (0,-1) arc (-90:-180:1);
	\draw [->-] (0,-1) arc (-90:0:1);
} +\tikzfig{
	\node [vertex] (x) at (0,0) {$\lambda_0$};
	\node at (2,0.3) {};
	\node at (-2,0.3) {};
	\node [bullet] (n) at (0,1) {};
	\node [bullet] (s) at (0,-1) {};
	\node [bullet] (sw) at (-0.7,-0.7) {};
	\node [bullet] (e) at (1,0) {};
	\node [bullet] (w) at (-1,0) {};
	\draw [->-] (x) to (sw);
	\draw [->-=1] (w) to (-2,0);
	\draw [->-=1] (e) to (2,0);
	\draw [->-] (0,1) arc (90:0:1);
	\draw [->-] (-0.7,-0.7) arc (-135:-180:1);
	\draw [->-] (0,1) arc (90:180:1);
	\draw [->-] (0,-1) arc (-90:-135:1);
	\draw [->-] (0,-1) arc (-90:0:1);
}\right) + [\mu,\beta_{(2)}]_\nec
\end{equation}
where the vertices $\tikzfig{\node [bullet] (n) at (0,0) {}; \draw [->-] (n) to (0.5,0); \draw [->-] (n) to (-0.5,0);}$ get assigned $\alpha$.

From this, we will construct a pre-CY structure $m$ with $m_{(1)} = \mu$ and $m_{(2)} = \alpha$; each higher term $m_{(\ell)}$ for $\ell > 3$ can be calculated iteratively in the previous terms. To illustrate this, let us first discuss the case of $m_{(3)}$. By definition this element must solve 
\[ [\mu,m_{(3)}]_\nec = m_{(2)} \circ m_{(2)} \]
which is an equation in $C^{2d-1}(A)$. We multiply by two to express everything in terms of brackets
\[ 2[\mu,m_{(3)}]_\nec = [m_{(2)},m_{(2)}]_\nec \]
and then substitute the second $m_{(2)}=\alpha$ factor using \cref{eq:alpha}, to get
\begin{align*}
	2[\mu,m_{(3)}]_\nec = &\pm\frac{1}{2}\ \tikzfig{
	\node [vertex] (x) at (0,0) {$\lambda_0$};
	\node at (2,0.3) {};
	\node at (-2,0.3) {};
	\node [bullet] (n) at (0,1) {};
	\node [bullet] (s) at (0,-1) {};
	\node [bullet] (ne) at (0.7,0.7) {};
	\node [bullet] (e) at (1,0) {};
	\node [bullet] (w) at (-1,0) {};
	\node [bullet] (ww) at (-1.6,0) {};
	\draw [->-] (x) to (ne);
	\draw [->-=1] (e) to (2,0);
	\draw [->-] (0,1) arc (90:45:1);
	\draw [->-] (0.7,0.7) arc (45:0:1);
	\draw [->-] (0,1) arc (90:180:1);
	\draw [->-] (0,-1) arc (-90:-180:1);
	\draw [->-] (0,-1) arc (-90:0:1);
	\draw [->-] (w) to (ww);
	\draw [->-] (ww) to (-2.5,1.5);
	\draw [->-] (ww) to (-2.5,-1.5);
	} \quad \pm\ \dots \\
	&\pm\frac{1}{2}\ \tikzfig{
	\node [vertex] (x) at (0,0) {$\lambda_0$};
	\node at (2,0.3) {};
	\node at (-2,0.3) {};
	\node [bullet] (n) at (0,1) {};
	\node [bullet] (s) at (0,-1) {};
	\node [bullet] (ne) at (0.7,0.7) {};
	\node [bullet] (e) at (1,0) {};
	\node [bullet] (w) at (-1,0) {};
	\node [bullet] (ww) at (-1.6,-0.6) {};
	\draw [->-] (x) to (ne);
	\draw [->-=1] (e) to (2,0);
	\draw [->-] (0,1) arc (90:45:1);
	\draw [->-] (0.7,0.7) arc (45:0:1);
	\draw [->-] (0,1) arc (90:180:1);
	\draw [->-] (0,-1) arc (-90:-180:1);
	\draw [->-] (0,-1) arc (-90:0:1);
	\draw [->-] (w) to (-2.5,1.5);
	\draw [->-] (ww) to (w);
	\draw [->-] (ww) to (-2.5,-1.5);
} \pm\ \dots
\end{align*}

In other words, $m_{(3)}$ satisfies the equation
\[ [\mu,m_{3}]_\nec = \frac{1}{2}\left( [m_{(2)},[\mu,\beta]_\nec]_\nec + (\del_2 \Gamma_{(2)})(\lambda) \right) \]
where, as in \cref{prop:Ecompatible}, $\del_2$ is the differential on that increases the number of outgoing legs by one, given by taking the necklace bracket of a tube quiver with the valence 2 vertex. 

Using the graded Jacobi relation, the equation satisfied by $m_{(2)}$ and the equation defining $\Gamma_{(3)}$ we then have that $m_{(3)}$ satisfies
\begin{equation}\label{eq:m3}
[\mu,m_{(3)}]_\nec = \frac{1}{2}\left( [\mu,[m_{(2)},\beta]_\nec]_\nec + [\mu, \Gamma^0_{(3)}(\lambda_0) + \Gamma^1_{(3)}(\lambda_1)]_\nec \right)
\end{equation}

This equation is slightly misleading; it looks like it we could write
\[ m_{(3)} = \frac{1}{2}\left( [m_{(2)},\beta]_\nec + \Gamma^0_{(3)}(\lambda_0) + \Gamma^1_{(3)}(\lambda_1) \right) \]
and compute $m_{(3)}$ directly from $m_{(1)} = \mu, m_{(2)} = \alpha$ and $\lambda$, but this is not true. By counting degrees, we see that the homological degree of $\Gamma^0_{(3)}$ in $C^*_{(\ell)}(A)$ is $-\ell d +2$; so among the tube quivers of that degree, there could be some that have a single vertex with 3 outgoing arrows, such as, for example, the vertex $p$ of the following quiver:
\[\tikzfig{
	\node [inner sep=0pt] (x) at (0,0) {$\times$};
	\node [bullet] (nw) at (-0.5,0.87) {};
	\node [bullet] (se) at (0.5,-0.87) {};
	\node [bullet] (sw) at (-0.5,-0.87) {};
	\node [bullet,label=left:{$p$}] (e) at (1,0) {};
	\node [bullet] (w) at (-1,0) {};
	\draw [->-,shorten <=-3.5pt] (x) to (se);
	\draw [->-=1] (nw) to (-1,1.74);
	\draw [->-=1] (sw) to (-1,-1.74);
	\draw [->-=1] (e) to (2,0);
	\draw [->-] (1,0) arc (0:120:1);
	\draw [->-] (-1,0) arc (180:120:1);
	\draw [->-] (-1,0) arc (180:240:1);
	\draw [->-] (0.5,-0.87) arc (-60:-120:1);
	\draw [->-] (1,0) arc (0:-60:1);
}\]

So in order to evaluate the right-hand side we would need to already know the value of $m_{(3)}$. Nevertheless, we prove that one can solve \cref{eq:m3} up to cohomology; this is also true for each higher term $m_{(\ell)}$. More precisely, we have:
\begin{proposition}\label{prop:inverting}
	For each $\ell \ge 2$, the component of the Maurer-Cartan equation
	\[ 2[\mu,m_{(\ell)}]_\nec = \sum_{i=2}^{\ell-1} [m_{(i)},m_{(\ell-i+1)}]_\nec \]
	has a solution given by
	\[ \hspace{-0.9cm} m_{(\ell)} = \frac{1}{\ell-1}\left( [\mu, \beta_{(\ell)}]_\nec + \dots + [m_{(\ell-1)},\beta_{(2)}]_\nec + \Gamma^0_{(\ell)}(\lambda_0) + \dots + \Gamma^{\ell-2}_{(\ell)}(\lambda_1) \right) \]
	for some element
	\[ \beta_{(2)} + \beta_{(3)} + \dots + \beta_{(\ell-1)} \in C^1_{[d]}(A) \]
	where $\beta_{(i)} \in C^1_{(i,d)}(A)$ depends on all previous $\beta_{(j)}$ and $m_{(j)}$ with $j <i$. 
\end{proposition}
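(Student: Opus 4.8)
The plan is to argue by induction on $\ell$, constructing at each stage both the structure map $m_{(\ell)}$ and the element $\beta_{(\ell)}$, while maintaining the inductive hypotheses that the Maurer--Cartan equation $[m,m]_\nec=0$ holds through order $\ell-1$ and that each previously built $m_{(j)}$ obeys the stated formula. The starting point is the observation that the claimed expression is exactly the order-$\ell$ piece of the implicit relation $Fm(\lambda)=e_m$ modulo $[m,-]_\nec$-exact terms: writing $\rho_{(\ell)}:=[\mu,\beta_{(\ell)}]_\nec+[m_{(2)},\beta_{(\ell-1)}]_\nec+\dots+[m_{(\ell-1)},\beta_{(2)}]_\nec$ for the order-$\ell$ component of $[m,\beta]_\nec$ with $\beta=\sum_j\beta_{(j)}$, the asserted formula reads $(\ell-1)m_{(\ell)}=\Gamma_{(\ell)}(\lambda)+\rho_{(\ell)}$, where $\Gamma_{(\ell)}(\lambda)=\sum_{i=0}^{\ell-2}\Gamma^i_{(\ell)}(\lambda_i)$ is the order-$\ell$ part of the fiberwise derivative. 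The base case $\ell=2$ is \eqref{eq:alpha}, which puts $m_{(2)}=\alpha$ in precisely this form and is $[\mu,-]_\nec$-closed.

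The heart of the inductive step is a single identity coming from the closedness of the quiver $\Gamma$. Since $\Gamma$ is $(\del-uR+\del_2+\del_3+\dots)$-closed (\cref{def:gammaQuiver}) and $\lambda$ is a negative cyclic cycle, \cref{prop:Ecompatible} gives the chain-level identity $[m,\Gamma(\lambda)]_\nec=0$; extracting its order-$\ell$ component yields
\[ [\mu,\Gamma_{(\ell)}(\lambda)]_\nec = -\sum_{k=2}^{\ell-1}[m_{(k)},\Gamma_{(\ell-k+1)}(\lambda)]_\nec, \]
which I will refer to as $(\star)$. Crucially, although $\Gamma_{(\ell)}(\lambda)$ itself involves $m_{(\ell)}$, the right-hand side of $(\star)$ only involves the strictly lower quivers $\Gamma_{(\le \ell-1)}(\lambda)$ and hence none of the undetermined data. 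I would then substitute the inductive formula $\Gamma_{(j)}(\lambda)=(j-1)m_{(j)}-\rho_{(j)}$ (valid for $j<\ell$) into $(\star)$; using the graded symmetry of the necklace bracket to combine the coefficients $(\ell-k)$ and $(k-1)$ of the pair $[m_{(k)},m_{(\ell-k+1)}]_\nec$ into $\ell-1$, the $m$--$m$ terms assemble into $\tfrac{\ell-1}{2}\sum_{i=2}^{\ell-1}[m_{(i)},m_{(\ell-i+1)}]_\nec$, while the remaining $m$--$\rho$ terms combine with $[\mu,\rho_{(\ell)}]_\nec$ into the order-$\ell$ part of $[m,[m,\beta]_\nec]_\nec=\tfrac12[[m,m]_\nec,\beta]_\nec$, which vanishes because $[m,m]_\nec$ is zero through order $\ell-1$. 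Applying $[\mu,-]_\nec$ to the asserted formula and feeding in these computations then produces exactly the required $2[\mu,m_{(\ell)}]_\nec=\sum_{i=2}^{\ell-1}[m_{(i)},m_{(\ell-i+1)}]_\nec$, so any $m_{(\ell)}$ satisfying the formula solves the Maurer--Cartan component.

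What remains --- and this is the main obstacle --- is to actually produce an $m_{(\ell)}$ satisfying the formula, since $\Gamma^0_{(\ell)}(\lambda_0)$ is self-referential: for degree reasons it contains tube quivers with a single valence-$\ell$ vertex, into which the evaluation inserts the still-unknown $m_{(\ell)}$ (exactly the phenomenon flagged for $m_{(3)}$ after \eqref{eq:m3}). The key point making this tractable is that the dependence is \emph{linear}: writing $\Gamma_{(\ell)}(\lambda)=L(m_{(\ell)})+C$ with $L$ the contribution of the valence-$\ell$ vertices and $C$ the remaining (known) part, the formula becomes an affine fixed-point equation $\big((\ell-1)\id-L\big)m_{(\ell)}=C+\rho_{(\ell)}$; and identity $(\star)$ forces $[\mu,L(x)]_\nec=0$ for all $x$, so the Maurer--Cartan conclusion above is independent of how this fixed point is resolved. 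I would solve the fixed-point equation by a secondary induction on the length (number of inputs) of the Hochschild cochain, using that each such quiver carries additional internal vertices fed by $\lambda_0$ and the lower $m_{(j)}$, so that $L$ strictly raises length and is therefore locally nilpotent; this lets one read off the length-$n$ part of $m_{(\ell)}$ from strictly shorter data, with $\beta_{(\ell)}$ chosen along the way to absorb the $[\mu,-]_\nec$-exact ambiguity. Verifying that $L$ genuinely increases length (equivalently, that the self-referential quivers cannot be length-preserving) is the delicate combinatorial input, and is where I would spend the most care.
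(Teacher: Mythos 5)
Your first step --- deriving, from the closedness of $\Gamma$ and the inductive formulas for $m_{(j)}$, $j<\ell$, that any $m_{(\ell)}$ satisfying the stated formula solves the order-$\ell$ Maurer--Cartan component --- is essentially the paper's own computation run in reverse, and you correctly isolate the real difficulty: the term $\Gamma^0_{(\ell)}(\lambda_0)$ is self-referential, because its evaluation inserts the unknown $m_{(\ell)}$ into those tube quivers carrying a single $\ell$-valent vertex. The gap is in how you resolve this. You claim that the resulting linear operator $L$ strictly raises the length (number of inputs) of Hochschild cochains and is therefore locally nilpotent, so that $(\ell-1)\,\mathrm{id}-L$ can be inverted degree by degree. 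This is false. Take the quiver $\Theta$ from the paper's proof: the $\ell$-valent vertex $p$ feeds one output into a binary product with the output of the ``bubble'' $\lambda_0\circ\alpha$. By the nondegeneracy equation the bubble's length-zero component is the unit cochain plus an exact correction, so the length-$n$ component of $\Theta(\lambda_0)$ contains (up to sign) the length-$n$ component of $m_{(\ell)}$ itself; worse, for problematic quivers in which $p$ sits on the circle and absorbs tensor factors of $\lambda_0$, the length-$n$ output depends on components of $m_{(\ell)}$ of length \emph{greater} than $n$. So $L$ is neither length-raising nor triangular in the length filtration, and your secondary induction never gets started.

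The paper's resolution is of a genuinely different nature and, tellingly, uses the nondegeneracy of $\lambda_0$ at every inductive step, which your argument does not. By \cref{lem:homologous}, all problematic quivers are homologous in the quotient $\scrT^d_{(\ell)}/\scrT^{d,<\ell}_{(\ell)}$, so $\Gamma^0_{(\ell)}$ may be replaced by the single representative $\Theta$ plus a $\del$-exact term plus quivers with no $\ell$-valent vertex; nondegeneracy then says the bubble is cohomologous to the unit, hence $\Theta(\lambda_0)$ is cohomologous to $m_{(\ell)}$, i.e.\ $L\simeq\mathrm{id}$ up to homotopy --- the opposite of nilpotent. Moving this term to the left-hand side turns the implicit equation into $(\ell-2)\,m_{(\ell)}=(\text{known})$ up to $[\mu,-]_\nec$-exact terms, which is solvable since $\ell\ge 3$, with $\beta_{(\ell)}$ chosen as a primitive of the exact discrepancy. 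A secondary caution: your identity $(\star)$ invokes \cref{prop:Ecompatible}, which is stated for a full pre-CY structure, so at stage $\ell$ you must check that only the Maurer--Cartan components already established actually enter; and $(\star)$, being a single identity about the specific element under construction, does not by itself ``force $[\mu,L(x)]_\nec=0$ for all $x$''.
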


This proposition ultimately follows from a combinatorial fact about tube quivers of a specific degree,which we now explain. Note that $\Gamma^0_{(\ell)}$ is the only `problematic' term; all the other $\Gamma^i_{(\ell)}$ only have vertices with $\ell-1$ or less outgoing legs so by induction we know how to evaluate them. 

Recall that the space $\mathscr{CT}_{(\ell,d)}$ where $\Gamma^0_{(\ell)}$ lives is defined as the cyclically graded-symmetric elements of $\scrT^d_{(\ell)}$ in homological degree $-d\ell+2\ell-4$. We define $\scrT^{d,<\ell}_{(\ell)}$ to be the subcomplex of $\scrT^d_{(\ell)}$ spanned by the tube quivers that \emph{do not} have any vertex with $\ell$ outgoing legs. We now look at the quotient complex $\scrT^d_{(\ell)}/\scrT^{d,<\ell}_{(\ell)}$.

In homological degree $-d\ell+2\ell-4$, every element of degree  $\scrT^d_{(\ell)}/\scrT^{d,<\ell}_{(\ell)}$ can be represented by a linear combination of tube quivers with a single vertex with $\ell$ outgoing legs; all other non-output vertices are `generic', that is, either have two inputs and one output, or are sources with two outputs.
\begin{lemma}\label{lem:homologous}
	Any two elements in $\scrT^d_{(\ell)}/\scrT^{d,<\ell}_{(\ell)}$ are homologous, up to a sign. In other words, given any two tube quivers $\Gamma,\Gamma'$ as above, there is some linear combination of tube quivers $\Gamma''$ such that
	\[ \del\Gamma'' = \Gamma \pm \Gamma' + (\text{term in } \scrT^{d,<\ell}_{(\ell)}). \]
\end{lemma}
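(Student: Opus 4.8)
The plan is to reduce the statement to the homology computation of \cref{prop:circle} by collapsing the distinguished $\ell$-valent vertex. Write $Q_\bullet := \scrT^d_{(\ell)}/\scrT^{d,<\ell}_{(\ell)}$ for the quotient complex. First I would record the two structural facts that make the argument work. In the degree $n := -d\ell + 2\ell - 4$ of interest, a representative has a single vertex $p$ with $\ell$ outgoing legs and all remaining non-output vertices generic; this is the \emph{minimal} homological degree in which $Q_\bullet$ is nonzero, since any further contraction of the generic part raises the degree and no edge can be contracted into $p$ (its out-valence already equals the number $\ell$ of outputs). Consequently the boundary $\del$ of such a normal-form quiver can only separate $p$ or a generic vertex; but every separation of $p$ distributes its $\ell$ outgoing legs between two vertices of strictly smaller out-valence, hence lands in $\scrT^{d,<\ell}_{(\ell)}$. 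Thus each normal-form generator is a $\del$-cycle in $Q_\bullet$, and it suffices to show $\dim_\kk H_n(Q_\bullet) \le 1$ together with the fact that distinct generators represent the same class up to sign.

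Next I would build the collapsing map. Define $c \colon Q_\bullet \to \scrT^{d}_{(1)}[\sigma]$, for an appropriate degree shift $\sigma$, by merging the $\ell$ output legs of $p$ into a single output, turning $p$ into the unique sink of a tube quiver with one open output. Because the ambient surface has genus zero and the outputs are read off the boundary in their fixed cyclic order $o_1,\dots,o_\ell$, the cyclic arrangement of the $\ell$ legs around $p$ is forced; hence $c$ is a bijection on generators and carries no hidden multiplicity. Under $c$ the differential matches up: separations of $p$ die in $Q_\bullet$ (they lie in $\scrT^{d,<\ell}_{(\ell)}$) and correspond on the target to the sink, which is never separated, while separations of generic vertices of an $\ell$-output quiver are in bijection with those of its one-output image. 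Therefore $c$ is an isomorphism of complexes in the range of degrees around $n$, and in particular $H_n(Q_\bullet)$ is identified with $H_*(\scrT^{d}_{(1)},\del)$ in the corresponding degree.

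I would then invoke \cref{prop:circle} in the case $\ell = 1$: one has $H_*(\scrT^{d}_{(1)},\del) \cong H_*(S^1)$ up to a shift, so it is at most one-dimensional in every degree, giving $\dim_\kk H_n(Q_\bullet) \le 1$. If this homology vanishes, every cycle — in particular each generator — is already a boundary in $Q_\bullet$, and the claim is immediate; if it is $\kk$, then any two normal-form generators $\Gamma,\Gamma'$ are cycles representing scalar multiples of one fixed class, so $\Gamma = \epsilon\,\Gamma'$ modulo $\del Q_\bullet$ for some scalar $\epsilon$. Lifting this equality in $Q_\bullet$ back to $\scrT^d_{(\ell)}$ produces exactly a $\Gamma''$ with $\del\Gamma'' = \Gamma - \epsilon\,\Gamma' + (\text{term in }\scrT^{d,<\ell}_{(\ell)})$.

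The step I expect to be the crux is pinning the scalar to $\epsilon = \pm 1$, i.e.\ upgrading the rational one-dimensionality to an integral statement. For this I would reuse the integral refinement indicated at the end of the previous lemma: the $\ZZ$-valued cocycle on $\scrT^d_{(\ell)}$ that records whether the source and the first output are aligned evaluates to $\pm 1$ on each normal-form generator, which forces the comparison scalar between any two of them to be a sign. The remaining routine points — checking that $c$ commutes with $\del$ with all signs, and that the forced cyclic order genuinely leaves no freedom — are bookkeeping of the same kind already carried out for $\Gamma_{(2)}$, and I would dispatch them using the sign conventions fixed there.
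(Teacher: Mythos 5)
There is a genuine gap: the collapsing map $c\colon \scrT^d_{(\ell)}/\scrT^{d,<\ell}_{(\ell)}\to \scrT^d_{(1)}[\sigma]$ at the heart of your argument is not well defined, and the bijection claim fails. In the relevant homological degree the distinguished vertex $p$ has \emph{zero} inputs and $\ell$ outputs (this is forced by the degree count), so it is a source, not a sink; moreover only some of its $\ell$ outgoing legs end at output sinks --- at least one must be an internal edge (otherwise $p$ is disconnected from the circle), and conversely the quiver's $\ell$ outputs are in general attached to several different vertices of the circle-with-trees, not all to $p$. So ``merging the $\ell$ output legs of $p$ into a single output, turning $p$ into the unique sink'' does not parse, and no corrected version of it can be a bijection on generators: the normal-form generators of the quotient complex carry a lot of residual combinatorics (the shapes and attachment points of the output trees, the position of $p$, the landing point of the source arrow), far more than the handful of generators of $\scrT^d_{(1)}$ in any fixed degree. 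Without this identification, the bound $\dim H_n \le 1$ is unsupported, and the final step pinning the scalar to $\pm 1$ also leans on a cocycle that was constructed on $\scrT^d_{(\ell)}$ in a different degree and is not shown to descend to the quotient.

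The paper's proof avoids computing the homology of the quotient altogether and is more elementary: given a normal-form quiver $Q$, contract any internal edge to obtain $P$; then $\del P = \pm Q \pm Q' + (\text{term in } \scrT^{d,<\ell}_{(\ell)})$, where $Q'$ is the unique other expansion of the contracted vertex, because every separation of the $\ell$-valent vertex itself lands in $\scrT^{d,<\ell}_{(\ell)}$. Since any two normal-form quivers are connected by a finite chain of such contraction/expansion moves, summing the intermediate $P$'s produces the desired $\Gamma''$, and the coefficient $\pm 1$ comes for free from the fact that each step compares exactly two expansions with unit coefficients --- no separate integrality argument is needed. If you want to salvage a homological formulation, the statement you should aim for is precisely that the degree-$n$ homology of the quotient complex is generated by the class of a single normal-form quiver, but the proof of that is the connectivity argument above, not a comparison with $\scrT^d_{(1)}$.
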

\begin{proof}
	Let $Q$ be any such tube quiver as above, that is, with exactly one vertex $p$ with $\ell$ outgoing legs and all other vertices `generic', for instance the tube quiver
	\[\tikzfig{
		\node [inner sep=0pt] (x) at (0,0) {$\times$};
		\node [bullet] (nw) at (-0.5,0.87) {};
		\node [bullet] (se) at (0.5,-0.87) {};
		\node [bullet] (sw) at (-0.5,-0.87) {};
		\node [bullet,label=left:{$p$}] (e) at (1,0) {};
		\node [bullet] (w) at (-1,0) {};
		\draw [->-,shorten <=-3.5pt] (x) to (se);
		\draw [->-=1] (nw) to (-1,1.74);
		\draw [->-=1] (sw) to (-1,-1.74);
		\draw [->-=1] (e) to (2,0);
		\draw [->-=1] (e) to (1.5,1.74);
		\draw [->-=1] (e) to (1.5,-1.74);
		\draw [->-] (1,0) arc (0:120:1);
		\draw [->-] (-1,0) arc (180:120:1);
		\draw [->-] (-1,0) arc (180:240:1);
		\draw [->-] (0.5,-0.87) arc (-60:-120:1);
		\draw [->-] (1,0) arc (0:-60:1);
	}\]
	for $\ell=5$, where $v$ is the only non-generic vertex, of homological degree $d\times 5-d-2\times5+4 = 4d-6$.
	
	We pick any edge $e:v_1\to v_2$ (not connecting to the outputs) of $Q$ and contract it to a vertex $w$, giving some other tube quiver $P$. Calculating the differential gives
	\[ \del P = \pm Q \pm Q' \pm (\text{term in } \scrT^{d,<\ell}_{(\ell)}) \]
	where $Q'$ is obtained from $P$ by expanding $w$ in another direction. We deduce this from checking separately the three possibilities: $v_1 =p, v_2 = p$ or $e$ not incident to $p$. For example
	
	But we can go from any of these tube quivers $Q$ in $\scrT^d_{(\ell)}/\scrT^{d,<\ell}_{(\ell)}$ to any other one by a sequence of edge contractions and expansions. So we can find a sequence of $P$s going from $\Gamma$ to $\Gamma'$ whose sum $\Gamma''$ solves the desired equation.
\end{proof}

\begin{proof}(of \cref{prop:inverting})
	We prove this by induction. The case $\ell = 2$ is just the chain-level description of nondegeneracy; so it follows by the assumption that $\lambda_0$ is nondegenerate. For any fixed $\ell$ we write the component of the Maurer-Cartan equation
	\[ 2[\mu,m_{(\ell)}]_\nec = \sum_{i=2}^{\ell-1} [m_{(i)},m_{(\ell-i+1)}]_\nec \]
	and using the result for $m_{(j)}$ with $j < \ell$ we get that $m_{(\ell)}$ satisfies the equation
	\begin{align*} 
		(\ell -1)[\mu,m_{(\ell)}] &= [\mu,[m_{(2)},\beta_{(\ell-1)}]_\nec + \dots + [m_{(\ell-1)},\beta_{(2)}]_\nec]\\
		&+ [\mu,\Gamma^0_{(\ell)}(\lambda_0) + \dots + \Gamma^{\ell-2}_{(\ell)}(\lambda_1)]
	\end{align*}
	
	But by \cref{lem:homologous}, we can rewrite
	\[ \Gamma^0_{(\ell)} = \Theta + \del \Gamma' + \widetilde\Gamma^0_{(\ell)}, \]
	where $\widetilde\Gamma^0_{(\ell)} \in \scrT^{d,<\ell}_{(\ell)}$, and $\Theta$ is the specific `problematic quiver' of the form
	\[ \Theta = \tikzfig{
		\node [bullet] (v3) at (0,0.8) {};
		\node [vertex] (v2) at (0,0) {$\lambda_0$};
		\node [bullet] (v4) at (0.8,0) {};
		\node [bullet] (v5) at (0,-0.8) {};
		\node [bullet] (v6) at (0,-1.5) {};
		\node [bullet,label={[xshift=2pt]$p$}] (v1) at (-1,-1.5) {};
		\draw [->-] (v2) to (v4);
		\draw [->-] (0,0.8) arc (90:0:0.8);
		\draw [->-] (0,0.8) arc (90:270:0.8);
		\draw [->-] (0.8,0) arc (0:-90:0.8);
		\draw [->-] (v5) to (v6);
		\draw [->-] (v1) to (-2.2,-1.5);
		\draw [->-] (v1) to (-2,-0.8);
		\draw [->-] (v1) to (-1.8,-0.1);
		\draw [->-] (v1) to (-1.6,-1.9);
		\draw [->-] (v1) to (-0.4,-1.9);
		\draw [->-] (v1) to (v6);
		\draw [->-] (v6) to (+1.1,-1.5);
	}\]
	with one vertex $p$ with $\ell$ outgoing arrows (for example, in the drawing above, $\ell=6$). We now rearrange the equation as
	\begin{align*} 
		[\mu,(\ell-1)m_{(\ell)} - \Theta(\lambda_0)] &= [\mu,[m_{(2)},\beta_{(\ell-1)}]_\nec + \dots + [m_{(\ell-1)},\beta_{(2)}]_\nec] \\
		&+[\mu, \widetilde\Gamma^0_{(\ell)}(\lambda_0) + \dots + \Gamma^{\ell-2}_{(\ell)}(\lambda_1)]
	\end{align*}
	where now the right-hand side does have any vertices with $\ell$ or more outgoing legs, so we can evaluate it using the $m_{(j)}$ that we already know.
		
	As for the left-hand side, by the nondegeneracy condition we know that the `bubble' to the right of $\Theta$ evaluates to a cochain that is cohomologous to the unit cochain $1 \in C^0(A)$. Thus, if we replace $\Theta(\lambda_0)$ by $m_{(\ell)}$ in the equation above and solve for $m_{(\ell)}$, we can find an element that solves
	\[ (\ell-1)m_{(\ell)} = [m_{(2)},\beta_{(\ell-1)}]_\nec + \dots + [m_{(\ell-1)},\beta_{(2)}]_\nec + \Gamma^0_{(\ell)}(\lambda_0) + \dots + \Gamma^{\ell-2}_{(\ell)}(\lambda_1), \]
	up to $[\mu,-]_\nec\text{-exact terms}$, so we can just pick $\beta_{(\ell)}$ to be a primitive of these exact terms.
\end{proof}

Repackaging the statement of \cref{prop:inverting}, by picking an appropriate element $\beta = \sum_{i \ge 2} \beta_{(i)} \in C^1_{[d]}(A)$, we get a map to pre-CY structures
\begin{definition}\label{def:phi}
	The map $\Phi: (CC^-_d(A))_\mathrm{nondeg} \to (\cM_{d-\mathrm{preCY}}(A))_\mathrm{nondeg} \subset C^2_{[d]}(A)$ is defined by sending $\lambda \mapsto m = \mu + m_{(2)} + \dots$, where the $m_{(i)}$ are defined using the tube quivers $\Gamma_{(i)}$ and an appropriately chosen element $\beta$.
\end{definition}
By definition, the image $m$ satisfies the equations $m \circ m=0$ and $e_m = \Gamma(m,\lambda) + [m,\beta]_\nec$, with the `energy function' $e_m = \sum_{\ell \ge 2}(\ell-1)m_{(\ell)}$. In other words, $\Phi$ maps smooth CY structures of dimension $d$ to pre-CY structures of dimension $d$ with nondegenerate $m_{(2)}$.

\subsubsection{Example: the circle, continued}\label{sec:exampleContinued}
In order to illustrate the statement of \cref{prop:inverting} in action, we return to the simple example discussed in \cref{sec:theCircle}, that is, the dg category $A$ corresponding to the circle (more precisely, to its realization as the boundary of the 2-simplex).

Recall that the negative cyclic chain representing the smooth CY structure is given by the following element of $C_*(A)[[u]]$:
\[ \lambda = 01[10] + 12[21] - 02[20] + (-01[10|01|10] - 12[21|12|21] + 02[20|02|20])u + \dots\]
The element $\alpha \in C^*_{(2)}(A)$ inverse to the $u^0$ component $\lambda_0$ is given by the formula
\[\tikzfig{
	\node [vertex] (v1) at (0,0) {$\alpha$};
	\draw [-w-] (v1) to (-1,0);
	\draw [->-] (v1) to (1,0);
	\draw [->-] (0,-1) to (v1);
	\node at (0,0.5) {$\underline{k}$};
	\node at (0,-1.2) {$P$};
	\node at (0.6,-0.5) {$\underline{i}$};
	\node at (-0.6,-0.5) {$\underline{j}$};
} = \begin{cases} \frac{1}{2}\left(\delta_{jk} e_k \otimes P - \delta_{ik}P \otimes e_k \right), \text{\ if\ } P \text{\ counter-clockwise} \\
-\frac{1}{2}\left(\delta_{jk} e_k \otimes P - \delta_{ik}P \otimes e_k \right), \text{\ if\ } P \text{\ clockwise} \end{cases}
\]
Together with its $\ZZ/2$-rotation, this specifies all the nontrivial values of $\alpha$.

By the previous results of this section, the element $\alpha$ is the first component $m_{(2)}$ of a pre-CY structure. Each higher component $m_{(k)}$ has cohomological degree $dk-d-2k+4 = 3-k$ since $d=1$. Since $A$ is concentrated in degree zero, we have $C^{3-k}_{(k)}(A) = 0$ for $k \ge 4$, so the terms $m_{(\ge 4)}$ are all zero. Moreover, the only component of $m_{(3)}$ that could be nontrivial is the term $m^{0,0,0}_{(3)}$ with zero inputs.

By \cref{prop:inverting}, we can find linear combinations of tube quivers $\Gamma^0_{(3)}, \Gamma^1_{(3)}$, both with three outputs, such that a solution for the equation $[\mu,m_{(3)}] = [m_{(2)},m_{(2)}]$ is given by
\[ m_{(3)} = \frac{1}{2}(\Gamma^0_{(3)}(\lambda_0) + \Gamma^1_{(3)}(\lambda_1)) \]

Let us start from the second term. We know that the terms in $\Gamma^1_{(3)}$ are of maximum (cohomological) degree among the tube quivers with three outgoing legs; all of those are cohomologous and we can pick any of those representatives; so $\Gamma^1_{(3)}(\lambda_1)$ is given by the diagram
\[\tikzfig{
	\node [vertex] (x) at (0,0) {$\lambda_1$};
	\node [bullet] (se) at (0.5,-0.87) {};
	\node [bullet] (ne) at (0.5,0.87) {};
	\node [bullet] (w) at (-1,0) {};
	\node [bullet] (t) at (-0.87,0.5) {};
	\draw [->-] (x) to (t);
	\draw [->-=1] (ne) to (1,1.74);
	\draw [->-=1] (se) to (1,-1.74);
	\draw [-w-=1] (w) to (-2,0);
	\draw [->-] (1,0) arc (0:60:1);
	\draw [->-] (1,0) arc (0:-60:1);
	\draw [->-] (-0.5,0.87) arc (120:60:1);
	\draw [->-=0.8] (-0.5,0.87) arc (120:150:1);
	\draw [->-=0.8] (-0.87,0.5) arc (150:180:1);
	\draw [->-] (-0.5,-0.87) arc (-120:-60:1);
	\draw [->-] (-0.5,-0.87) arc (-120:-180:1);
	\node [vertex,fill=white] (e) at (1,0) {$\alpha$};
	\node [vertex,fill=white] (nw) at (-0.5,0.87) {$\alpha$};
	\node [vertex,fill=white] (sw) at (-0.5,-0.87) {$\alpha$};
}\]
where we input $\lambda_1 = 01[10|01|10] + 12[21|12|21] - 02[20|02|20]$. Since $\alpha$ is only nonzero when there is exactly one arrow as input, the only nonzero terms we get upon evaluating the diagram are when sending one arrow to each $\alpha$.

We evaluate this diagram separately for each choice of labeling of the three regions around the circle with the objects $0,1,2$; we see that if the three labels are the same, we get cancelling contributions, and that the only nonzero terms happen when exactly two labels are the same: we have
\[\tikzfig{
	\node [vertex] (x) at (0,0) {$\Gamma^{1}_{(3)}(\lambda_1)$};
	\draw [-w-] (x) to (-2,0);
	\draw [->-] (x) to (1,1.74);
	\draw [->-] (x) to (1,-1.74);
	\node at (-0.8,1.34) {$\underline{i}$};
	\node at (-0.8,-1.34) {$\underline{j}$};
	\node at (1.2,0) {$\underline{i}$};
} = \begin{cases} -\frac{1}{8}(ij)\otimes e_i \otimes {ji}, \text{\ if\ } (ij) \text{\ counter-clockwise}	\\ \frac{1}{8}(ij)\otimes e_i \otimes {ji}, \text{\ if\ } (ij) \text{\ clockwise}
\end{cases}
\]

The first term $\Gamma^0_{(3)}(\lambda_0)$ is more difficult to compute, since the combination of tube quivers $\Gamma^0_{(3)}$ has a complicated expression. However, in this case we can calculate it without expressing this entire combination. Recall from \cref{sec:rotationDiff} that we have a decomposition
\[ \scrT_{(\ell)} = (\scrT_{(\ell)})^\mathrm{edge} \oplus (\scrT_{(\ell)})^\mathrm{vertex} \]
between tubes whose central arrow lands onto an edge or a vertex of the circle. For any choice of dimension $d$ (which puts a grading on this space, and defines the signs of the differentials), the differential $\del$ preserves $(\scrT_{(\ell)})^\mathrm{edge}$ and decomposes as $\del = \del^\mathrm{edge} + \del^\mathrm{vertex}$ on $(\scrT_{(\ell)})^\mathrm{vertex}$, where $\del^v$ preserves this space.

In our case, $d = 1$ and the homological degrees of $(\scrT^1_{(3)})$ lie in $-3,-2,-1,0,1,2$ and this decomposition becomes: 
\begin{align*}
	(\scrT^1_{(3)})_{-3} &= (\scrT^1_{(3)})^\mathrm{edge}_{-3} \\
	(\scrT^1_{(3)})_{-2} &= (\scrT^1_{(3)})^\mathrm{edge}_{-2} \otimes (\scrT^1_{(3)})^\mathrm{vertex}_{-2} \\
	\dots \\
	(\scrT^1_{(3)})_{2} &= (\scrT^1_{(3)})^\mathrm{vertex}_{2}
\end{align*}

The element $\Gamma^0_{(2)} \in (\scrT^1_{(3)})_{-1}$ decomposes under the direct sum above as $(\Gamma^0_{(2)})^\mathrm{edge} + (\Gamma^0_{(2)})^\mathrm{vertex}$. We observe that inputting the choice of $\alpha$ above into the 2-valent vertices of any diagram in $(\scrT^1_{(3)})^\mathrm{edge}_{-1}$ gives zero. Thus the value we want is determined by $(\Gamma^0_{(2)})^\mathrm{vertex}$.

The equation satisfied by $\Gamma_{(2)}$ says that $\del\Gamma^0_{(2)} = -R(\Gamma^1_{(2)})$; together with the long exact sequence in cohomology associated to the short exact sequence $(\scrT^1_{(3)})^\mathrm{edge} \to (\scrT^1_{(3)}) \to (\scrT^1_{(3)})^\mathrm{vertex}$, these facts imply that we can pick any solution $X$ to the equation
\[ \del^\mathrm{vertex}( X ) = R(\Gamma^1_{(2)}) \]
and the term $\Gamma^0_{(3)}(\lambda_0)$ will be equal to $X(\lambda_0)$.

We now provide such a solution, given by $X = R(Y)$, where $Y$ is the following linear combination: \footnote{We omit the orientations in the expression for $Y$, but they must be chosen coherently}
\[
	\frac{1}{6} \left(\tikzfig{
	\node [inner sep=0pt] (x) at (0,0) {$\times$};
	\node [bullet] (se) at (0.5,-0.87) {};
	\node [bullet] (ne) at (0.5,0.87) {};
	\node [bullet] (w) at (-1,0) {};
	\node [bullet] (t) at (-0.87,0.5) {};
	\draw [->-,shorten <=-3.5pt] (x) to (t);
	\draw [->-=1] (ne) to (1,1.74);
	\draw [->-=1] (se) to (1,-1.74);
	\draw [-w-=1] (w) to (-2,0);
	\draw [->-] (1,0) arc (0:60:1);
	\draw [->-] (1,0) arc (0:-60:1);
	\draw [->-] (-0.5,0.87) arc (120:60:1);
	\draw [->-=0.8] (-0.5,0.87) arc (120:150:1);
	\draw [->-=0.8] (-0.87,0.5) arc (150:180:1);
	\draw [->-] (-1,0) arc (-180:-60:1);
	\node [bullet] (e) at (1,0) {};
	\node [bullet] (nw) at (-0.5,0.87) {};
	} + \tikzfig{
	\node [inner sep=0pt] (x) at (0,0) {$\times$};
	\node [bullet] (se) at (0.5,-0.87) {};
	\node [bullet] (ne) at (0.5,0.87) {};
	\node [bullet] (w) at (-1,0) {};
	\node [bullet] (t) at (-0.87,0.5) {};
	\draw [->-,shorten <=-3.5pt] (x) to (t);
	\draw [->-=1] (ne) to (1,1.74);
	\draw [->-=1] (se) to (1,-1.74);
	\draw [-w-=1] (w) to (-2,0);
	\draw [->-] (1,0) arc (0:60:1);
	\draw [->-] (1,0) arc (0:-60:1);
	\draw [->-=0.8] (-1,0) arc (180:150:1);
	\draw [->-=0.8] (-0.87,0.5) arc (150:60:1);
	\draw [->-] (-0.5,-0.87) arc (-120:-60:1);
	\draw [->-] (-0.5,-0.87) arc (-120:-180:1);
	\node [bullet] (e) at (1,0) {};
	\node [bullet] (sw) at (-0.5,-0.87) {};
	} + \tikzfig{
	\node [inner sep=0pt] (x) at (0,0) {$\times$};
	\node [bullet] (se) at (0.5,-0.87) {};
	\node [bullet] (ne) at (0.5,0.87) {};
	\node [bullet] (w) at (-1,0) {};
	\node [bullet] (ww) at (-1.5,0) {};
	\node [bullet] (t) at (-0.87,0.5) {};
	\draw [->-,shorten <=-3.5pt] (x) to (t);
	\draw [->-=1] (ne) to (1,1.74);
	\draw [->-=1] (se) to (1,-1.74);
	\draw [->-] (ww) to (w);
	\draw [-w-=1] (ww) to (-2,0);
	\draw [->-] (1,0) arc (0:60:1);
	\draw [->-] (1,0) arc (0:-60:1);
	\draw [->-=0.8] (-1,0) arc (180:150:1);
	\draw [->-=0.8] (-0.87,0.5) arc (150:60:1);
	\draw [->-] (-1,0) arc (-180:-60:1);
	\node [bullet] (e) at (1,0) {};
	\node [bullet] (ww) at (-1.5,0) {};
	} +\dots \right)
\]
where the ellipses indicate we sum over the other two cyclic permutations of these diagrams. Evaluating $X(\lambda_0)$ with the given value of $\alpha$, and labels $i,i,j\neq i$ on the three regions gives us $18$ non-zero terms, all equal; taking in account the $1/6$ factor gives
\[ \begin{cases} -\frac{3}{8}(ij)\otimes e_i \otimes {ji}, \text{\ if\ } (ij) \text{\ counter-clockwise}	\\ \frac{3}{8}(ij)\otimes e_i \otimes {ji}, \text{\ if\ } (ij) \text{\ clockwise} \end{cases} \]
with all other cases zero.

Thus we get that the element $m_{(3)} = \frac{1}{2}(\Gamma^0_{(3)}(\lambda_0) + \Gamma^1_{(3)}(\lambda_1))$ we wanted to calculate is given by
\[\tikzfig{
	\node [vertex] (x) at (0,0) {$m_{(3)}$};
	\draw [-w-] (x) to (-2,0);
	\draw [->-] (x) to (1,1.74);
	\draw [->-] (x) to (1,-1.74);
	\node at (-0.8,1.34) {$\underline{i}$};
	\node at (-0.8,-1.34) {$\underline{j}$};
	\node at (1.2,0) {$\underline{i}$};
} = \begin{cases} -\frac{1}{4}(ij)\otimes e_i \otimes {ji}, \text{\ if\ } (ij) \text{\ counter-clockwise}	\\ \frac{1}{4}(ij)\otimes e_i \otimes {ji}, \text{\ if\ } (ij) \text{\ clockwise}
\end{cases}
\]

One can check that this element satisfies $[\mu, m_{(3)}] + \alpha \circ \alpha = 0$ and $[\alpha,m_{(3)}] = 0$. In other words, we have a fully explicit description of the pre-CY structure on this dg category:
\begin{corollary}
	Taking $m_{2} = \alpha$ and $m_{(3)}$ as above, and $m_{(\ge 4)} = 0$, defines a pre-CY structure of dimension 1 on $A$.
\end{corollary}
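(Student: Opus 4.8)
The plan is to verify the single remaining condition, the Maurer--Cartan equation $[m,m]_\nec = 0$, for $m = \mu + m_{(2)} + m_{(3)}$ with $m_{(2)} = \alpha$ and $m_{(\ge 4)} = 0$. I would decompose it by the number $\ell$ of outgoing legs, exactly as in \cref{prop:inverting}, so that the $\ell$-output component reads $2[\mu, m_{(\ell)}]_\nec = \sum_{i=2}^{\ell-1} [m_{(i)}, m_{(\ell-i+1)}]_\nec$. Since $[m_{(i)},m_{(j)}]_\nec$ carries $i+j-1$ outputs and every $m_{(\ge 4)}$ vanishes, only the indices $i,j\in\{1,2,3\}$ contribute; hence only $\ell\le 5$ can give a nontrivial equation, while the components with $\ell\ge 6$ hold vacuously.

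I would then clear the components $\ell = 1,2,3$ using what is already in place. For $\ell = 1$ the equation is $[\mu,\mu]_\nec = 0$, which holds because $A$ is an ordinary associative category concentrated in degree zero, so $\mu = \mu^2$ and the relation is just associativity. For $\ell = 2$ it is $[\mu,\alpha]_\nec = 0$, the closedness of $\alpha$ proved in \cref{sec:theCircle}. For $\ell = 3$ it is $2[\mu, m_{(3)}]_\nec = [\alpha,\alpha]_\nec$, which is exactly the equation that $m_{(3)}$ was constructed to solve in \cref{prop:inverting}; with the explicit $m_{(3)}$ computed above it can also be verified by hand on each labelling of the three regions by $\underline 0,\underline 1,\underline 2$.

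The real content of the corollary is that the construction terminates consistently, that is, that the components $\ell = 4$ and $\ell = 5$ close up even though $m_{(4)} = m_{(5)} = 0$ (so here the right-hand sides must vanish \emph{exactly}, not merely up to $[\mu,-]_\nec$-exact terms). For $\ell = 5$ the equation is $[m_{(3)},m_{(3)}]_\nec = 0$, and I would argue it vanishes structurally: $m_{(3)}$ is supported entirely on its zero-input component $m^{0,0,0}_{(3)}$, so neither factor offers an input leg for the necklace insertion and every term of the bracket is empty. For $\ell = 4$ the equation is $[\alpha, m_{(3)}]_\nec = 0$; here the one-input component of $\alpha$ does absorb one of the three outputs of $m_{(3)}$, yielding a zero-input, four-output cochain that must be shown to cancel.

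The main obstacle is therefore the finite but sign-sensitive verification of the $\ell = 3$ and $\ell = 4$ identities. Both are genuinely finite computations: $A$ has three objects, all morphisms are spanned by clockwise or counter-clockwise paths in degree zero, and the relevant cochains have bounded length, so each reduces to evaluating a necklace bracket on every admissible labelling of regions and confirming that the clockwise and counter-clockwise contributions cancel with the correct signs. I expect the sign bookkeeping in $[\alpha, m_{(3)}]_\nec$, tracking the $\ZZ/2$-symmetrization of $\alpha$ against the three cyclically arranged outputs of $m_{(3)}$, to be the delicate point, whereas the support argument makes the vanishing of all $\ell \ge 5$ components immediate.
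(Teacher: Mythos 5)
Your plan matches the paper's (very terse) justification of this corollary: the paper likewise reduces everything to the two finite checks $[\mu,m_{(3)}]_\nec+\alpha\circ\alpha=0$ and $[\alpha,m_{(3)}]_\nec=0$, asserting only that ``one can check'' them, and disposes of all other components of $[m,m]_\nec=0$ automatically. The one cosmetic difference is at $\ell\ge 5$, where you invoke the support argument (the purely zero-input $m_{(3)}$ offers no slot for necklace insertion) while the paper's degree count (the $\ell$-output component of $[m,m]_\nec$ lives in cohomological degree $4-\ell<0$, and $C^{*}_{(\ell)}(A)$ vanishes in negative degrees since $A$ is concentrated in degree zero) yields the same vanishing; both arguments are valid.
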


\subsection{The simplicial lift}\label{sec:simplicial}
It is clear that the map $\Phi$ we constructed above should be some sort of inverse to the noncommutative Legendre transform $\cL$ of \cref{def:ncLegendre}. But this is not true strictly; for one, the definitions of both $\cL$ and $\Phi$ involve making choices. Also, the two sides related by these maps look different: the locus of nondegenerate elements in negative cyclic homology is a conical locus inside of a linear space, while the set of Maurer-Cartan solutions is the solution set of a quadratic equation.

However, looking at the iterative way in which we constructed these maps, we see that in each step we had to solve an equation relating \emph{linearly} one new component $\lambda_{(k-2)}$ of the negative cyclic chain $\lambda$ to one new component $m_{(k)}$ of the pre-CY structure $m$. Moreover, this relation came from a quasi-isomorphism between these linear spaces of choices; the space $\left(\cM_{d-\mathrm{preCY}}(A)\right)_\mathrm{nondeg}$ should have the structure of an iterated fibration of linear spaces, with the fiber at each step related by a quasi-isomorphism to a graded piece of $CC^*_-(A)$

It turns out that this can be made precise by using the theory of simplicial sets of solutions to Maurer-Cartan equations, as developed in \cite{hinich1996descent,getzler2009lie}, among others. We will show that the map $\Phi$ we constructed in the previous section admits a simplicial lift to a weak equivalence of simplicial sets. 

\subsubsection{The simplicial Maurer-Cartan set}
Let $(\mathfrak{g}^*, \delta)$ be a nilpotent dg Lie algebra over $\kk$. One can look at its naive set of solutions to the Maurer-Cartan equation
\[ MC(\mathfrak{g}^*) = \{ y \in \mathfrak{g}^1\ |\ \delta y + [y,y]/2 = 0\} \]
which in principle has only the structure of a set. Following the exposition in \cite{getzler2009lie}, we recall how to upgrade this to a simplicial set. For any $n \ge 0$, denote by
\[ \Omega^*(\Delta^n) = \kk[t_0,\dots,t_n]/\langle t_0 + \dots + t_n - 1, dt_0 + \dots dt_n \rangle \]
the graded commutative dg algebra of polynomial differential forms on the $n$-simplex. Here, the $t_i$ are in degree zero and the $dt_i$ are in degree one; the differential is $d(t_i) dt_i$ and $d(dt_i) = 0$.

The following proposition/definition is due to Hinich \cite{hinich1996descent}.
\begin{proposition}
	There is a simplicial set $MC_*(\mathfrak{g}^*)$ whose $n$-simplices are given by
	\[ MC_n(\mathfrak{g}^*) = MC(\mathfrak{g}^* \otimes \Omega^*(\Delta^n)), \]
	that is, by the solution set of the Maurer-Cartan equation $(\delta+d)y + [y,y]/2 = 0$ on the dg Lie algebra of $\mathfrak{g}$-forms on the simplex. 
\end{proposition}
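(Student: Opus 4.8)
The plan is to exhibit $MC_*(\mathfrak{g}^*)$ as the result of applying the strictly functorial Maurer--Cartan construction to a simplicial dg Lie algebra, so that the simplicial structure is inherited for free. First I would record that $n \mapsto \Omega^*(\Delta^n)$ is a simplicial commutative dg algebra. This is the standard fact that the geometric simplices $\Delta^\bullet$ form a cosimplicial object whose coface maps $\delta^i\colon \Delta^{n-1}\to\Delta^n$ and codegeneracy maps $\sigma^i\colon \Delta^{n+1}\to\Delta^n$ (the affine maps between simplices) pull back polynomial differential forms to morphisms of commutative dg algebras
\[ d_i = (\delta^i)^*\colon \Omega^*(\Delta^n)\to\Omega^*(\Delta^{n-1}),\qquad s_i = (\sigma^i)^*\colon \Omega^*(\Delta^n)\to\Omega^*(\Delta^{n+1}). \]
Concretely these are the substitutions deleting or repeating a barycentric coordinate $t_j$, and one checks directly that they commute with the de Rham differential and the multiplication. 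The cosimplicial identities satisfied by $\delta^i,\sigma^i$ then translate into the simplicial identities among the $d_i,s_i$.

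Second, I would tensor with $\mathfrak{g}^*$. For any commutative dg algebra $(B,d_B)$, the space $\mathfrak{g}^*\otimes B$ carries a dg Lie algebra structure with total differential $\delta\otimes 1 + 1\otimes d_B$ and bracket $[x\otimes a, y\otimes b] = (-1)^{|a||y|}[x,y]\otimes ab$; this is functorial in $B$, every morphism $\phi$ of commutative dg algebras inducing a \emph{strict} morphism $1\otimes\phi$ of dg Lie algebras. Applying this to $B=\Omega^*(\Delta^\bullet)$ and the structure maps above produces a simplicial dg Lie algebra $[n]\mapsto \mathfrak{g}^*\otimes\Omega^*(\Delta^n)$, whose face and degeneracy maps $1\otimes d_i$ and $1\otimes s_i$ are dg Lie morphisms and satisfy the simplicial identities, since tensoring $\mathfrak{g}^*$ with $-$ is functorial and the $d_i,s_i$ already satisfy them. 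Each of these dg Lie algebras is nilpotent, because $\mathfrak{g}^*$ is and $\Omega^*(\Delta^n)$ is a unital commutative algebra; in particular the Maurer--Cartan equation is a finite polynomial equation in each degree.

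Third, I would invoke the functoriality of the Maurer--Cartan set itself. A strict morphism $\psi\colon\mathfrak{h}\to\mathfrak{h}'$ of dg Lie algebras commutes with the differential and the bracket on the nose, hence sends a solution of $\delta y + \tfrac12[y,y]=0$ to a solution; thus $MC(-)$ is a functor from nilpotent dg Lie algebras to sets. Composing this functor with the simplicial dg Lie algebra of the previous step yields a simplicial set whose $n$-simplices are exactly $MC(\mathfrak{g}^*\otimes\Omega^*(\Delta^n)) = MC_n(\mathfrak{g}^*)$, with face and degeneracy maps $MC(1\otimes d_i)$ and $MC(1\otimes s_i)$, and the Maurer--Cartan equation in $\mathfrak{g}^*\otimes\Omega^*(\Delta^n)$ reads $(\delta+d)y+\tfrac12[y,y]=0$ as claimed. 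Because $MC$ is a functor, it transports the simplicial identities of the underlying simplicial dg Lie algebra to simplicial identities of the resulting simplicial set, completing the construction.

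The one point demanding genuine care --- and where I expect the only real obstacle to lie --- is the sign bookkeeping in the tensor-product structure: one must fix the Koszul signs in $\delta\otimes 1 + 1\otimes d_B$ and in the bracket so that the total differential is $\delta+d$, the quadratic term is $\tfrac12[y,y]$, and, crucially, so that $1\otimes d_i$ and $1\otimes s_i$ remain \emph{strict} dg Lie morphisms compatible with these signs. Once a consistent sign convention on $\mathfrak{g}^*\otimes\Omega^*(\Delta^n)$ is fixed, every remaining verification is a formal consequence of the functoriality of $MC(-)$ and of the simplicial cdga structure on $\Omega^*(\Delta^\bullet)$.
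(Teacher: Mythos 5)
Your argument is correct, and it is the standard one: the paper itself offers no proof of this statement, simply attributing it to Hinich \cite{hinich1996descent}, and the construction you describe --- viewing $n\mapsto\Omega^*(\Delta^n)$ as a simplicial commutative dg algebra, tensoring to obtain a simplicial nilpotent dg Lie algebra, and composing with the functor $MC(-)$ on strict dg Lie morphisms --- is precisely the argument underlying the cited result. Your closing remark about Koszul signs is the right place to be careful, but with the convention $[x\otimes a,y\otimes b]=(-1)^{|a||y|}[x,y]\otimes ab$ and differential $\delta\otimes 1+1\otimes d$ everything goes through, so there is no gap.
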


\begin{remark}
	In the literature of this topic, the name `Maurer-Cartan' is applied to two different formulations of the equation; in order to avoid confusion let us be clear about their relation. On any \emph{dg Lie algebra} $(\mathfrak{g}^*,d)$, one can look at the equation $dx + [x,x]/2 = 0$, and on any \emph{graded Lie algebra} $\mathfrak{h}^*$ one can look at the equation $[y,y]=0$, as we did earlier in this paper.
	
	Both are often called the Maurer-Cartan equation; the relation is that if $\mathfrak{g}^* \subset \mathfrak{h}^*$ as graded Lie algebras and there is an element $\mu \in \mathfrak{h}^1$ such that $[\mu,\mu]= 0$ and $dx = [\mu,x]$ for all $x \in \mathfrak{g}^*$, then the two equations are equivalent if we look for a solution of the form $y = \mu + x$. In our case, we have
	\[ \mathfrak{g}^* = \prod_{\ell \ge 2} C^*_{(\ell,d)}(A), \qquad \mathfrak{h}^* = C^*_{[d]}(A) = \prod_{\ell \ge 1} C^*_{(\ell,d)}(A) \]
	with $\mu$ given by our fixed $A_\infty$-structure; the solution $x$ is then the sum of all the $m_{(\ell)}$ with $\ell \ge 2$ and $y$ is the full pre-CY structure $m$, also including $m_{(1)} = \mu$.
\end{remark}

The set of zero-simplices is exactly our naive set $MC(\mathfrak{g}^*)$. We would like to apply this to the dg Lie algebra $\mathfrak{g}^* = \prod_{\ell \ge 2} C^*_{(\ell,d)}(A)$. This does not make sense exactly since this algebra is not nilpotent. Nevertheless, note that we can truncate it at any finite $\ell$ and obtain a nilpotent algebra, and that the Maurer-Cartan solutions we want are a limit over solutions on these truncated algebras.

Let us be more precise. Suppose that we have a graded Lie algebra $\mathfrak{a}^*$, endowed with a descending filtration
\[ \mathfrak{a}^* = F^0 \mathfrak{a}^* \supset F^1 \mathfrak{a}^* \supset F^2 \mathfrak{a}^* \supset \dots \]
with the property that for $x \in F^i \mathfrak{a}^*, y \in F^j \mathfrak{a}^*$, we have $[x,y] \in F^{i+j} \mathfrak{a}^*$. We then consider the completions under the filtration $F$
\[ \mathfrak{g}^* = \lim_{\stackrel{\longleftarrow}{i \ge 1}} F^1 \mathfrak{a}^*/F^i \mathfrak{a}^*, \qquad \mathfrak{h}^* := \widehat{\mathfrak{a}^*} = \lim_{\stackrel{\longleftarrow}{i \ge 0}} \mathfrak{a}^*/F^i \mathfrak{a}^* \]
Note that each truncated piece $F^1 \mathfrak{a}^*/F^i \mathfrak{a}^*$ is a nilpotent graded Lie algebra, and we have a natural injection $\mathfrak{g}^* \subset \mathfrak{h}^*$.

If we have an element $\mu \in \mathfrak{h}^1$ such that $[\mu,\mu] = 0$ and $[\mu,-]$ preserves the filtration, this defines a differential on each $F^1 \mathfrak{a}^*/F^i \mathfrak{a}^*$, and each map $F^1 \mathfrak{a}^*/F^{i+1} \mathfrak{a}^* \to F^1 \mathfrak{a}^*/F^i \mathfrak{a}^*$ is a surjection of nilpotent dg algebras. By \cite{hinich1996descent}, this induces a Kan fibration
\[ MC_*(F^1 \mathfrak{a}^*/F^{i+1} \mathfrak{a}^*) \to MC_*(F^1 \mathfrak{a}^*/F^i \mathfrak{a}^*) \]
between the Maurer-Cartan simplicial sets.
\begin{definition}
	The Maurer-Cartan simplicial set of the dg Lie algebra $\mathfrak{g}^*$ is the limit of simplicial sets
	\[ MC_*(\mathfrak{g}^*) := \lim_{\stackrel{\longleftarrow}{i \ge 0}} MC_*(F^1 \mathfrak{a}^*/F^{i+1} \mathfrak{a}^*). \]
\end{definition}

The case we are interested is when 
\[ \mathfrak{a}^* = \bigoplus_{\ell \ge 1} C^*_{(\ell,d)}(A)[1]  \]
endowed with the descending filtration
\[ F^i \mathfrak{a}^* = \bigoplus_{\ell \ge i+1} C^*_{(\ell,d)}(A)[1]. \]
Then we have that the graded Lie algebra $\mathfrak{h}^*$ is exactly $C^*_{[d]}(A)[1]$, which is the dg Lie algebra where pre-CY structures of dimension $d$ live. The condition on the element $\mu$ is exactly the condition for an $A_\infty$-structure on $A$; taking $\mathfrak{g}^*$ to be the dg Lie algebra where the rest of the pre-CY structure lives, with differential $[\mu,-]$, we get an identification between the set of zero-simplices $MC_0(\mathfrak{g}^*)$ and the naive set of $d$-pre-CY structures $\cM_{d-\mathrm{preCY}}(A)$ of the previous section.

\subsubsection{The Deligne groupoid}
We now recall another type of structure on the solutions to the Maurer-Cartan equation, the `Deligne groupoid'. Let us describe it in the graded Lie algebra picture. If $\mathfrak{n}^*$ is a nilpotent graded Lie algebra, there is an exponential action of its degree zero part
\[ e^{\ad(-)}: \mathfrak{n}^0 \times \mathfrak{n}^* \to \mathfrak{n}^* \]
given by
\[ e^{\ad(x)}(y) = y + [x,y] + \frac{1}{2!}[x,[x,y]] + \frac{1}{3!}[x,[x,[x,y]]] + \dots \]
This exponential action extends to an action of the group-like elements of the (completed) universal enveloping algebra $\widehat U(\mathfrak{n}^0)$. 

Note now that $e^{\ad(x)}$ preserves the solution set of the (graded) Maurer-Cartan equation
\[ MC(\mathfrak{n}^*) = \{ y \in \mathfrak{n}^1\ |\ [y,y] = 0\} \]
since the adjoint action preserves the equation; therefore we can regard $MC(\mathfrak{n}^*)//\mathfrak{n}^0$ as a groupoid.

Again we must be a little careful because we want to apply this formalism to the graded Lie algebra $C^*_{[d]}(A)[1]$, which is not nilpotent. Considering again the case of the dg algebra
\[ \mathfrak{g}^* = \lim_{\stackrel{\longleftarrow}{i \ge 1}} F^1 \mathfrak{a}^*/F^i \mathfrak{a}^* \]
with differential $[\mu,-]$ sitting inside of the graded algebra
\[ \mathfrak{h}^* = \lim_{\stackrel{\longleftarrow}{i \ge 1}} \mathfrak{a}^*/F^i \mathfrak{a}^* \]
the we see that the exponential action of $\mathfrak{g}^0$ is well-defined on each truncated piece $F \mathfrak{a}^*/F^i \mathfrak{a}^*$ and therefore can be lifted to an action on the graded Lie algebra $\mathfrak{h}^*$, preserving the Maurer-Cartan equation. Therefore we also have a groupoid $MC(\mathfrak{h}^*)//\mathfrak{g}^0$. 

We choose this notation (with both $\mathfrak{h}^*$ and $\mathfrak{g}^*$) to remind us that the action of $\mathfrak{g}^*$ also involves the element $\mu \in \mathfrak{h^*}$, producing higher order terms $[x,\mu], \frac{1}{2}[x,[x,\mu]]$, etc., but does not change $\mu$ itself. So we can look for solutions of the Maurer-Cartan equation on $\mathfrak{h}^*$ of the form $\mu + x$ where $x \in F^2 \mathfrak{h}^*$; this is a subgroupoid $MC(\mathfrak{h}^*, \mu)//\mathfrak{g}^0$. 

From comparing this groupoid to the simplicial set we defined before, in the case where the algebra in question is supported in non-negative degrees, we have the following fact \cite{getzler2009lie}:
\begin{proposition}\label{prop:equiv}
	If $\mathfrak{g^*}$ vanishes in negative degrees, there is a natural bijection of sets
	\[ \pi_0(MC_*(\mathfrak{g}^*)) \cong \pi_0(MC(\mathfrak{h}^*, \mu)//\mathfrak{g}^0) \]
	between the connected components of the Maurer-Cartan simplicial set and the set of orbits of the Deligne groupoid.
\end{proposition}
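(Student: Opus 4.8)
The plan is to reduce the statement to the nilpotent case treated by Hinich and Getzler, and then to make the comparison explicit by analysing the $1$-simplices of $MC_*(\mathfrak{g}^*)$. First I would dispose of the non-nilpotency. Both sides of the claimed bijection are built as inverse limits over the tower of nilpotent quotients $\mathfrak{g}^*_i := F^1\mathfrak{a}^*/F^{i}\mathfrak{a}^*$: the simplicial set is $MC_*(\mathfrak{g}^*) = \lim_i MC_*(\mathfrak{g}^*_i)$ by definition, and the set of Deligne orbits is the corresponding limit of orbit sets. Since the excerpt records (via \cite{hinich1996descent}) that each $MC_*(\mathfrak{g}^*_{i+1}) \to MC_*(\mathfrak{g}^*_i)$ is a Kan fibration, these maps are in particular surjective on $\pi_0$, so the tower of $\pi_0$'s is Mittag--Leffler, the relevant $\lim^1$ vanishes, and $\pi_0$ commutes with the limit on the simplicial side; the same surjectivity realises the orbit set as a strict limit. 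Thus it suffices to prove the bijection for a single nilpotent $\mathfrak{g}^*$ concentrated in non-negative degrees.

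For such a $\mathfrak{g}^*$ I would produce the comparison map and check it is a bijection. Viewing an MC element as a $0$-simplex gives a map $MC(\mathfrak{g}^*)\to\pi_0(MC_*(\mathfrak{g}^*))$, and the heart of the matter is the identification of $1$-simplices. Writing a $1$-simplex as $\alpha = y(t) + \eta(t)\,dt$ with $y(t)\in\mathfrak{g}^1$, $\eta(t)\in\mathfrak{g}^0$ polynomial in the coordinate $t$ on $\Delta^1$, and decomposing $(\delta + d_{dR})\alpha + \tfrac12[\alpha,\alpha]=0$ by de Rham form degree, I obtain
\[ \delta y(t) + \tfrac12[y(t),y(t)] = 0, \qquad \dot y(t) = -\bigl(\delta + [y(t),-]\bigr)\eta(t). \]
The first equation says $y(t)$ is a path of MC elements; the second is the infinitesimal gauge flow (and one checks the first is preserved by the second using $(\delta+[y,-])^2 = [\delta y + \tfrac12[y,y],-]=0$). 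Integrating the flow, which is solvable by the usual iteration because $\mathfrak{g}^*$ is nilpotent, expresses $y(1)$ as the image of $y(0)$ under the holonomy of $\eta(t)\,dt$, a group-like element of $\widehat U(\mathfrak{g}^0)$, hence of the form $e^{x}$ with $x\in\mathfrak{g}^0$. Conversely, taking $\eta\equiv x$ constant gives $\dot y = [x,\mu+y]$, which integrates to $\mu + y(t) = e^{t\ad x}(\mu + y_0)$; at $t=1$ this is precisely the exponential action $e^{\ad x}(\mu+y_0)=\mu+y_1$ defining the Deligne groupoid. Therefore two MC elements lie in the same path component of $MC_*(\mathfrak{g}^*)$ if and only if they are gauge equivalent, and since $MC_*(\mathfrak{g}^*)$ is Kan, single-edge connectivity already computes $\pi_0$; this yields the bijection onto $\pi_0(MC(\mathfrak{h}^*,\mu)/\!/\mathfrak{g}^0)$.

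The step I expect to require the most care is controlling the passage between the two notions of equivalence, and this is exactly where the hypothesis that $\mathfrak{g}^*$ vanishes in negative degrees enters. A priori a homotopy between two MC elements could itself be identified only up to higher homotopy, and one must know that these higher homotopies — governed by $\mathfrak{g}^{<0}$ — neither create nor destroy $\pi_0$-classes. Vanishing of the negative part guarantees that $MC_*(\mathfrak{g}^*)$ is equivalent to the nerve of the Deligne groupoid, which is the content of Getzler's comparison \cite{getzler2009lie}; I would lean on that result for the clean statement and use the explicit $1$-simplex computation above to make the resulting bijection natural and to identify it concretely with the gauge action $e^{\ad(-)}$. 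A secondary technical point, already set up for us in the excerpt, is the $\lim^1$ bookkeeping in the reduction to the nilpotent truncations, which the Kan fibration property handles automatically.
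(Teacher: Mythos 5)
The paper does not actually prove this proposition: it is quoted as a known fact and attributed to Getzler \cite{getzler2009lie} (building on Hinich), so there is no internal proof to compare against. Your second paragraph --- the decomposition of a $1$-simplex as $y(t)+\eta(t)\,dt$, the resulting pair of equations (pointwise Maurer--Cartan plus the gauge flow $\dot y=-(\delta+[y,-])\eta$), the holonomy argument in one direction and the constant-$\eta$ integration $\mu+y(t)=e^{t\,\mathrm{ad}\,x}(\mu+y_0)$ in the other --- is precisely the standard argument behind the cited result, and it correctly identifies edge-connectedness in $MC_*$ with the exponential gauge action defining the Deligne groupoid. One remark: this $\pi_0$ identification does not in fact use the vanishing of $\mathfrak g^{<0}$ (that hypothesis is what upgrades the statement to an equivalence with the nerve of the groupoid, i.e.\ controls $\pi_1$ and above); higher homotopies cannot create or destroy $\pi_0$-classes, so your closing paragraph slightly misattributes where the hypothesis matters, though this is harmless.

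The genuine problem is your first paragraph. A Kan fibration need not be surjective on $\pi_0$ (e.g.\ lifting Maurer--Cartan elements along a surjection of nilpotent dg Lie algebras is obstructed), and even granting surjectivity, the obstruction to $\pi_0$ commuting with the limit of a tower of fibrations is $\textstyle\lim^1$ of the \emph{fundamental groups} $\pi_1(MC_*(\mathfrak g^*_i),y_i)$, not of the $\pi_0$'s; Mittag--Leffler for the tower of $\pi_0$'s is irrelevant to that. The same issue recurs in your unproved claim that the orbit set of the completed algebra is the strict limit of the truncated orbit sets. Fortunately the entire reduction is unnecessary: $\pi_0$ of any simplicial set is computed from its $0$- and $1$-simplices, and the $0$- and $1$-simplices of $\lim_i MC_*(\mathfrak g^*_i)$ are exactly Maurer--Cartan elements of the completed algebra (respectively, of its completed tensor product with $\Omega^*(\Delta^1)$). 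Your ODE and holonomy constructions converge in the filtration topology --- $[x,-]$ raises filtration degree, so the iterated integrals and the series $e^{t\,\mathrm{ad}\,x}$ make sense there --- so you should simply run the second paragraph directly on the pro-nilpotent algebra and delete the $\lim^1$ discussion.
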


\subsubsection{The simplicial equivalence}
Let us return to the Maurer-Cartan simplicial set and focus on the case of interest $\mathfrak{g}^* = \prod_{\ell \ge 2} C^*_{(\ell,d)}(A)$ for smooth $A$. We now prove the main result of this Section, lifting the map $\Phi: CC^-_d(A) \to \left(\cM_{d-\mathrm{preCY}}(A)\right)_\mathrm{nondeg}$ to a weak simplicial equivalence.

The target for this lift is evidently the nondegenerate locus in the Maurer-Cartan simplicial set corresponding to the dg Lie algebra above:
\[ \cM^\Delta_{d-\mathrm{preCY}}(A) := MC_*(\prod_{\ell \ge 2} C^*_{(\ell,d)}(A)) \]
The source is given by replacing the negative cyclic chain complex by its corresponding simplicial set under the Dold-Kan correspondence. Recall the Kan functor
\[ K_*: \mathrm{Ch}_{\ge 0}(\mathrm{Ab}) \to \mathrm{sAb} \]
which gives an equivalence between chain complexes of abelian groups supported in non-negative degree and simplicial abelian groups. We can further forget the abelian group structure and get a simplicial set.

The functor $K_*$ assigns to the chain complex $(V,\delta)$ the $n$-simplices
\[ K_n(V) = Z^0(C^*(\Delta^n) \otimes V, d + \delta) \]
where $(C^*(\Delta^n),d)$ is the normalized simplicial cochain complex on the $n$-simplex. One possible representation for this complex is in terms of \emph{linear differential forms}
\[ \omega_{i_0,\dots,i_k} = k! \sum_{0 \le j \le k} (-1)^j t_{i_j} dt_{i_0}\dots \widehat{dt_{i_j}} \dots dt_{i_k} \]
for any $1 \le k \le n$.

We now consider the chain complex $\tau_{\ge 0} (CC^-_{*+d}(A))$, i.e. the object of  $\mathrm{Ch}_{\ge 0}(\mathrm{Ab})$ given by shifting the negative cyclic complex down by $d$ and truncating it to lie in non-negative degrees.

A degree zero cycle in this complex is represented by a negative cyclic chain of degree $d$
\[ \lambda = \lambda_0 + \lambda_1 u + \lambda_2 u^2 + \dots \]
closed under $b+uB$, where $\lambda_i \in C_{d+2i}(A)$. Let us fix a choice of a nondegenerate `first component' $\lambda_0 = \nu$ and its inverse $\alpha \in C^d_{(2)}(A)$, representing inverse morphisms of bimodules in $\Hom_{A-A}(A^!,A[d])$ and $\Hom_{A-A}(A[d],A^!)$, respectively.

We now consider simplicial subsets on each side by requiring the `first components' to be constant simplices at $\lambda_0 = nu$ and $m_{(2)} = \alpha$. Let us describe this more precisely. On the source side $K_*(\tau_{\ge 0} (CC^-_{*+d}(A)))$, we decompose each $n$-simplex $\sigma$ as a sum over powers of $u$ and over the basis of forms $\{\omega_{i_0,\dots,i_k}\}$
\[ \sigma(\underline{t}) = \sum_{p=0}^{\infty} \lambda_{p,k,\{i_0,\dots,i_k\}} u^p \otimes \omega_{i_0,\dots,i_k} \]
Given any such $n$-simplex, we can require that its $p=0$ component (that is, its `value' at $u=0$) be constant along the simplicial coordinates $t_i$, and equal to $\nu$. That is, we require $\lambda_{0,k,\{i_0,\dots,i_k\}} = 0$ for all $k >0$ and $\lambda_{0,0,\{\}} = \nu$. This condition defines a simplicial subset of $K_*(\tau_{\ge 0} (CC^-_{*+d}(A)))$, which we denote by $K_*(\tau_{\ge 0} (CC^-_{*+d}(A)))_{\lambda_0 = \nu}$.

On the other side, we can do the same thing and fix the `first component' $m_{(2)}$ to be constant and equal to our chosen quasi-inverse $\alpha$. Each $n$-simplex of $\cM^\Delta_{d-\mathrm{preCY}}(A)$ is a solution to the Maurer-Cartan equation on the dg Lie algebra $\prod_{\ell \ge 2} C^*_{(\ell,d)}(A))\otimes \Omega^*(\Delta^n)$. Here $\Omega^*(\Delta^n)$ is spanned by polynomial differential forms on the simplicial coordinates $t_0,\dots,t_n$; we take the $\ell=2$ part of the solution and demand that it be degree zero and constant as a form, equal to $\alpha$. This again defines a simplicial subset which we denote $\cM^\Delta_{d-\mathrm{preCY}}(A)_{m_{(2)} = \alpha}$.

We are now ready to state the main result of this section. Recall the map of sets $\Phi$ that we defined in \cref{def:phi} takes a nondegenerate negative cyclic chain $\lambda = \lambda_0 + \dots$ whose first term $\lambda_0 = \nu$ has a quasi-inverse $\alpha$ and gives a pre-CY structure $m = \mu + m_{(2)} + \dots$ with $m_{(2)} = \alpha$.
\begin{theorem}\label{thm:simplicial}
	The map $\Phi$ lifts to a weak equivalence of simplicial sets
	\[ \Phi^\Delta: K_*(\tau_{\ge 0} (CC^-_{*+d}(A)))_{\lambda_0 = \nu} \xrightarrow{\simeq} \cM^\Delta_{d-\mathrm{preCY}}(A)_{m_{(2)} = \alpha}  \]
	Taking connected components and putting the resulting bijections together for pair of inverse classes $[\lambda]$ and $[\alpha]$, we get a bijection of sets
	\[ HC^-_d(A)_\mathrm{nondeg} \simeq \pi_0(\cM^\Delta_{d-\mathrm{preCY}}(A)_\mathrm{nondeg}) \]
	between (classes of) smooth CY structures and connected components of the space of nondegenerate pre-CY structures, both of dimension $d$.
\end{theorem}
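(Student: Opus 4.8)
The plan is to realize both simplicial sets as inverse limits of towers of Kan fibrations --- indexed by the number $\ell$ of outgoing legs on the target side and by the power of $u$ on the source side --- and to reduce the weak equivalence to a fiberwise statement that is precisely the quasi-isomorphism of \cref{prop:quasiIso}. On the target, the filtration $F^i\mathfrak{a}^* = \bigoplus_{\ell\ge i+1} C^*_{(\ell,d)}(A)[1]$ exhibits $\cM^\Delta_{d-\mathrm{preCY}}(A)_{m_{(2)}=\alpha}$ as the limit of the tower $MC_*(F^1\mathfrak{a}^*/F^{\ell+1}\mathfrak{a}^*)$, whose structure maps are Kan fibrations by \cite{hinich1996descent}. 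On the source, the $u$-adic filtration on $\tau_{\ge 0}(CC^-_{*+d}(A))$ turns $K_*(\tau_{\ge 0}(CC^-_{*+d}(A)))_{\lambda_0=\nu}$ into the limit of an analogous tower of principal (hence Kan) fibrations. The strategy is then: (i) exhibit $\Phi^\Delta$ as a map of these towers; (ii) identify the fibers and show the induced maps on fibers are weak equivalences; (iii) conclude via the standard fact that a level-wise weak equivalence between towers of fibrations induces a weak equivalence on limits (using the Milnor $\lim^1$ sequence); and (iv) read off the $\pi_0$ statement.

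For step (i), I would define $\Phi^\Delta$ by carrying out the iterative construction of \cref{prop:inverting} not over $\kk$ but over the dg Lie algebra $\mathfrak{g}^*\otimes \Omega^*(\Delta^n)$, functorially in $[n]$. The tube-quiver evaluation $E(\Gamma,-)$ and the necklace bracket $[-,-]_\nec$ are $\Omega^*(\Delta^n)$-linear, so the defining equations make sense simplicially; the only choices are the primitives $\beta_{(\ell)}$ of the $[\mu,-]_\nec$-exact terms, and these can be made coherently by fixing once and for all a contracting homotopy on each graded piece and applying it uniformly over all simplices. By construction $\Phi^\Delta$ respects the filtrations --- the $u^{\ell-2}$ piece maps into the $\ell$-th filtration level --- so it is a map of towers, and on $\pi_0$ it recovers the set-map $\Phi$ of \cref{def:phi}.

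For steps (ii)--(iii), note that the associated graded of the target filtration is \emph{abelian}, since the bracket of two elements of $F^i$ lands in $F^{2i}$; hence on the $\ell$-th graded piece the twisted Maurer--Cartan differential reduces to $[\mu,-]_\nec$, and the fiber of the target tower is the Dold--Kan space of $(C^*_{(\ell,d)}(A),[\mu,-]_\nec)$. Likewise the fiber of the source tower is the Dold--Kan space of the graded piece $C_{d+2(\ell-2)}(A)$ of $\tau_{\ge 0}(CC^-_{*+d}(A))$ with its Hochschild differential $b_\mu$. The induced map on fibers is, modulo lower-filtration corrections that do not affect the associated graded, the evaluation $\Gamma^{\ell-2}_{(\ell)}(-)\colon (C_*(A),b_\mu)\to (C^*_{(\ell)}(A),[\mu,-]_\nec)$, which is a quasi-isomorphism by \cref{prop:quasiIso} (after passing to $\ZZ/\ell$-invariants and applying the shift defining $C^*_{(\ell,d)}(A)$). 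A quasi-isomorphism of complexes induces a weak equivalence of Dold--Kan spaces, so the fiber maps are weak equivalences, and a level-wise weak equivalence of towers of Kan fibrations is a weak equivalence on limits; this gives the first assertion.

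Finally, for the $\pi_0$ statement, applying $\pi_0$ to the weak equivalence for a fixed inverse pair $(\nu,\alpha)$ identifies $\pi_0 K_*(\tau_{\ge 0}(CC^-_{*+d}(A)))_{\lambda_0=\nu}$, the set of degree-$d$ negative cyclic classes with leading term $\nu$, with $\pi_0$ of the corresponding slice of $\cM^\Delta_{d-\mathrm{preCY}}(A)$. Letting $(\nu,\alpha)$ range over all inverse pairs of nondegenerate classes sweeps out $HC^-_d(A)_\mathrm{nondeg}$ on the left and $\pi_0(\cM^\Delta_{d-\mathrm{preCY}}(A)_\mathrm{nondeg})$ on the right, yielding the bijection; when $HH^*(A)$ is concentrated in non-negative degrees one may additionally match the right-hand side with orbits of the Deligne groupoid via \cref{prop:equiv}. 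I expect the main obstacle to be step (i): verifying that the primitives $\beta_{(\ell)}$ can be chosen coherently enough that $\Phi^\Delta$ is a genuine map of simplicial sets (strictly compatible with faces and degeneracies) rather than merely a fiberwise-defined family --- equivalently, pinning down the contracting homotopies uniformly across all simplices so that the lift is a strict map of towers.
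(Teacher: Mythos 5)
Your proposal follows essentially the same route as the paper's proof: both exhibit $\Phi^\Delta$ as a map of towers of Kan fibrations obtained by running the iterative construction of \cref{prop:inverting} linearly over $\Omega^*(\Delta^n)$, and both reduce the weak equivalence to the fiberwise statement that $\Gamma^{\ell-2}_{(\ell)}(-)$ is a quasi-isomorphism, i.e.\ to \cref{prop:quasiIso}. The only point worth making explicit (which the paper does) is that identifying the source fibers, modeled on normalized simplicial cochains, with the target fibers, modeled on polynomial differential forms, requires the simplicial de Rham comparison via the Whitney forms $\omega_{i_0,\dots,i_k}$; your step (ii) uses this implicitly.
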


\begin{proof}
	Note that on the left hand side we have the normalized cochains on the simplex, while on the right-hand side we have differential forms; using the representatives above $\omega_{i_0,\dots,i_k}$, we embed the former into the latter.
	
	Given that embedding, to construct the simplicial lift $\Phi^\Delta$, we simply extend the evaluation map of ribbon quivers linearly over $\Omega^*(\Delta^n)$, and use the same formulas we did for defining $\Phi$. For example, given $m_{(2)}$, in the proof of \cref{prop:inverting} we showed that we can find a solution for $m_{(3)}$ of the form
	\[ m_{(3)} = \tilde\Gamma^0_{(3)}(\lambda_0) + \Gamma^1_{(3)}(\lambda_1) + [\mu, \beta_{(3)}] + [m_{(2)}, \beta_{(2)}] \]
	where the ribbon quivers in $\tilde\Gamma^0$ and $\Gamma^1_{(3)}$ only have vertices with 1 and 2 outgoing arrows.
	
	Given an $n$-simplex on the left hand side given by a linear form $\lambda(\underline{t})$, we can input this instead of $\lambda$ and get a form $m_{(3)}(\underline{t})$; by the same argument as we used to prove \cref{prop:inverting}, but now extended linearly over differential forms, this form will satisfy the equation
	\[ [\mu + d, m_{(3)}(\underline{t})] = [m_{(2)}, m_{(2)}] \]
	which is the new component of the Maurer-Cartan equation on the truncated piece
	$ \left( C^*_{[d]}(A)[1] / \prod_{i > \ell} C^*_{(\ell,d)}(A) \right) \otimes \Omega^*(\Delta^n)$. We continue this iteratively for $\ell = 3,4,\dots$; and in each step we get some polynomial differential forms $m_{(\ell)}(\underline{t})$ solving a new component of that equation, with $m_{(2)}$ fixed to be constant with value $\alpha$.
	
	It follows from the compatibility of evaluation with all the differentials involved that at each new step this defines a map of simplicial sets. Recall that from the definition of $\Phi$, at the step $\ell$ this map depends only on $\lambda$ up to the term with $u$-exponent $\ell-2$.
	
	These maps intertwine the maps induced by truncation, so we get a map between towers of simplicial sets
	\[\xymatrix{
		\dots \ar[r] & K_*(\tau_{\ge 0} (CC^-_{*+d}(A)_{u^2=0}))_{\lambda_0 = \nu} \ar[r] \ar[d] & K_*(\tau_{\ge 0} (CC^-_{*+d}(A)_{u=0}))|_{\lambda_0 = \nu} \ar[d]  \\
		\dots \ar[r] & MC_*\left(\frac{\prod_{\ell \ge 2} C^*_{(\ell,d)}(A)[1]}{\prod_{\ell \ge 4} C^*_{(\ell,d)}(A)[1]}\right)_{m_{(2)} = \alpha} \ar[r] & MC_*\left(\frac{\prod_{\ell \ge 2} C^*_{(\ell,d)}(A)[1]}{\prod_{\ell \ge 3} C^*_{(\ell,d)}(A)[1]}\right)_{m_{(2)} = \alpha}
	}\]
	and the desired map $\Phi^\Delta$ is the map induced between the limits of these towers.
	
	We now prove that the map $\Phi^\Delta$ so defined is a weak equivalence of simplicial sets. Each horizontal map is a Kan fibration, so it is enough to prove that each vertical map is a weak equivalence. We do this by induction; the last column is actually just the identity map of the point (seen as a the totally degenerate $n$-simplices); this is because we fixed by hand the $\lambda_0$ and $m_{(2)}$ components to be exactly $\nu$ and $\alpha$.
	 
	For the induction step we focus on a single square
	\[\xymatrix{
		 K_*(\tau_{\ge 0} (CC^-_{*+d}(A)_{u^{i}=0}))_{\lambda_0 = \nu} \ar[r] \ar[d] & K_*(\tau_{\ge 0} (CC^-_{*+d}(A)_{u^{i-1}=0}))|_{\lambda_0 = \nu} \ar[d]  \\
		 MC_*\left(\frac{\prod_{\ell \ge 2} C^*_{(\ell,d)}(A)[1]}{\prod_{\ell \ge i+2} C^*_{(\ell,d)}(A)[1]}\right)_{m_{(2)} = \alpha} \ar[r] & MC_*\left(\frac{\prod_{\ell \ge 2} C^*_{(\ell,d)}(A)[1]}{\prod_{\ell \ge i+1} C^*_{(\ell,d)}(A)[1]}\right)_{m_{(2)} = \alpha}
	}\]
	If the right column is a weak equivalence, by \cite{lurie2009higher} it is enough to show that the left vertical map induces weak equivalences for each pair of fibers of the horizontal maps over \emph{points} (i.e. 0-simplices).
	
	In down-to-earth terms, we have an `actual' solution of the Maurer-Cartan equation on $C^*_{[d]}(A)[1]$ up to the term $m_{(i)}$, corresponding to a truncated negative cyclic chain $\lambda = \lambda_0 + \lambda_1 u + \dots + \lambda_{i-2} u^{i-2}$. We then look at the map $\Phi^\Delta$ applied to a differential form
	\[ \lambda = \lambda_0 + \lambda_1 u + \dots + \lambda_{i-2} u^{i-2} + \lambda_{i-1}(\underline{t}) u^{i-1} \]
	where $\lambda_{i-1}(\underline{t})$ is linear on the $t_i$ coordinates on the $n$-simplex. The only dependence on this form is in the evaluation of the last ribbon quiver in $\Gamma_{(i+1)}$, that is, the term
	\[ \Gamma^{i-1}_{(i+1)}(\lambda_{i-1}(\underline{t})) \]
	so we are reduced to proving that the operation $\Gamma^{i-1}_{(i+1)}(-)$, seen as a map
	\[ C_*(A) \otimes C^*(\Delta^n) \to C^*_{(i+1)}(A) \otimes \Omega^*(\Delta^n) \]
	defines a weak equivalence between closed forms and forms satisfying the $i+1$ component of the Maurer-Cartan equation. This follows from the proof of \cref{prop:quasiIso} together with the fact that the inclusion of normalized simplicial cochains into differential forms is a homotopy retract (a simplicial version of the de Rham theorem).
\end{proof}

\subsubsection{Special case: the groupoid}\label{sec:groupoid}
\cref{thm:simplicial} holds for any smooth $A_\infty$ algebra/category $A$, without any assumptions on degrees. Now recall that if a dg Lie algebra $\mathfrak{g}$ vanishes in negative degrees, its set of Maurer-Cartan solutions admits an equivalent, simpler, description than the full simplicial set, given by the Deligne groupoid $MC(\mathfrak{g})/\mathfrak{g}^0$.

The following result can be seen as a slight refinement of \cref{thm:simplicial} in the case where $A$ has vanishing Hochschild cohomology in negative degrees.
\begin{theorem}
	Assume that $HH^i(A) = 0$ for all $i < 0$. Then there is a bijection
	\[ HC^-_*(A)_\mathrm{nondeg} \simeq \pi_0(MC(\mathfrak{g})_\mathrm{nondeg}/\mathfrak{g}^0) \]
	where $\mathfrak{g} = \prod_{\ell \ge 2} C^*_{(\ell,d)}(A)$, between nondegenerate negative cyclic homology classes and orbits in the groupoid of nondegenerate pre-CY structures.
\end{theorem}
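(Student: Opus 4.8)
The plan is to obtain this as a corollary of \cref{thm:simplicial} together with \cref{prop:equiv}. \cref{thm:simplicial} already provides a bijection $HC^-_d(A)_\mathrm{nondeg} \simeq \pi_0(\cM^\Delta_{d-\mathrm{preCY}}(A)_\mathrm{nondeg})$, and by construction $\cM^\Delta_{d-\mathrm{preCY}}(A) = MC_*(\mathfrak{g})$ for $\mathfrak{g} = \prod_{\ell \ge 2} C^*_{(\ell,d)}(A)$ with differential $[\mu,-]_\nec$. On the other hand \cref{prop:equiv} identifies $\pi_0(MC_*(\mathfrak{g}))$ with $\pi_0(MC(\mathfrak{h},\mu)/\mathfrak{g}^0)$, the orbit set of the Deligne groupoid, as soon as $\mathfrak{g}$ is concentrated in non-negative degrees. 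So the only new content of the refinement is (i) to show that the hypothesis $HH^i(A)=0$ for $i<0$ places $\mathfrak{g}$ in this non-negatively graded regime, and (ii) to check that nondegeneracy descends to the Deligne orbits; the asserted bijection then results from composing the two identifications and restricting to the nondegenerate loci.

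For (i) I would compute $H^*(\mathfrak{g},[\mu,-]_\nec)$ using the identifications from \cref{sec:chainLevelNondeg} and the proof of \cref{prop:quasiIso}. Since the nondegenerate locus is nonempty, $A$ carries a smooth CY structure of dimension $d$, so $A^! \simeq A_\Delta[-d]$ and hence $H^*(C^*_{(\ell)}(A),[\mu,-]_\nec) \cong \Ext^*_{A-A}(A_\Delta,(A^!)^{\otimes_A(\ell-1)}) \cong HH^{*-d(\ell-1)}(A)$. Tracking the shift $[(d-2)(\ell-1)]$ in the definition of $C^*_{(\ell,d)}(A)$ together with the degree shift built into the Lie-algebra convention (under which each $m_{(\ell)}$ becomes a Maurer-Cartan element), a bookkeeping of degrees gives $H^{i}(\mathfrak{g},[\mu,-]_\nec) \cong \prod_{\ell \ge 2} \left(HH^{\,i+3-2\ell}(A)\right)^{\ZZ_\ell}$, where the $d$-dependence cancels. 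For any $i<0$ and any $\ell \ge 2$ the Hochschild degree satisfies $i+3-2\ell \le 2-2\ell <0$, so every summand vanishes by hypothesis; thus $\mathfrak{g}$ has cohomology concentrated in non-negative degrees.

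It then remains to legitimately feed this into \cref{prop:equiv}, which is stated for dg Lie algebras that vanish in negative degrees \emph{at the chain level}, whereas $\mathfrak{g}$ is only non-negative in cohomology. The plan is to invoke homotopy transfer along the descending $\ell$-filtration to produce a minimal $L_\infty$-model of $\mathfrak{g}$ supported on $H^*(\mathfrak{g},[\mu,-]_\nec)$, which is genuinely concentrated in non-negative degrees; since both $MC_*(-)$ and the orbit set of the associated Deligne-type groupoid are invariant under filtered $L_\infty$-quasi-isomorphisms (Getzler, Hinich), \cref{prop:equiv} applies to the model and transports back to $\mathfrak{g}$. For (ii), nondegeneracy of a pre-CY structure is a condition on the class of $m_{(2)}$ in $H^*(C^*_{(2,d)}(A),[\mu,-]_\nec)$; since the $\ell=2$ part of a gauge transformation $e^{\ad x}$ changes $m_{(2)}$ only by the coboundary $\pm[\mu,x_{(2)}]_\nec$ (all higher brackets raising $\ell$), the nondegenerate locus is a union of gauge orbits, so $MC(\mathfrak{g})_\mathrm{nondeg}/\mathfrak{g}^0$ is well defined and matches the nondegenerate part of $\pi_0(MC_*(\mathfrak{g}))$.

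The main obstacle I anticipate is precisely this transport step: \cref{prop:equiv}, and the coincidence of $\pi_0(MC_*)$ with the Deligne orbit set, is only automatic in the strictly non-negatively graded setting, so one must justify carefully that passing to the minimal model does not alter the orbit set — equivalently, that the acyclic negative-degree chains of $\mathfrak{g}$ contribute no extra orbits and no extra gauge identifications. The degree bookkeeping in (i), while elementary, is the other delicate point, since the various shifts and the twisted $\ZZ_\ell$-action must be handled with care to confirm that the cancellation of $d$ and the inequality $i+3-2\ell<0$ are genuine.
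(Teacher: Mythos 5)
Your reduction has the right target but stalls exactly at the step you yourself flag as the ``main obstacle'', and that step is not a technicality: it is essentially the entire content of the theorem. The chain of identifications you propose --- $\pi_0(MC_*(\mathfrak{g}))\cong\pi_0(MC_*(\text{minimal model}))$ by invariance under filtered $L_\infty$-quasi-isomorphisms, followed by \cref{prop:equiv} applied to the minimal model --- only recomputes $\pi_0$ of the \emph{simplicial} Maurer--Cartan set, which \cref{thm:simplicial} already controls. What it does not deliver is the assertion of the theorem, which concerns the \emph{naive} Deligne groupoid of $\mathfrak{g}$ itself: objects the honest Maurer--Cartan elements of $\mathfrak{g}$, morphisms the exponentiated action of the honest $\mathfrak{g}^0$. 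A minimal $L_\infty$-model has a different underlying set of Maurer--Cartan elements and no strict gauge group, so \cref{prop:equiv} applied there identifies $\pi_0(MC_*)$ with the orbit set of a \emph{different} groupoid; transporting that back to a statement about $MC(\mathfrak{g})/\mathfrak{g}^0$ is precisely the claim that the acyclic negative-degree chains of $\mathfrak{g}$ create no extra components and that $\exp(\mathfrak{g}^0)$ already realizes all path-equivalences --- i.e.\ the claim you would need to prove, not cite. (Your degree bookkeeping in (i) is correct: the $d$-dependence cancels and the relevant Hochschild degree $i+3-2\ell$ is negative whenever $i<0$ and $\ell\ge 2$. Your point (ii), that nondegeneracy is gauge-invariant because the $\ell=2$ component of $e^{\ad x}$ moves $m_{(2)}$ only by $[\mu,x_{(2)}]_\nec$, is also correct and is implicit in the paper.)

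The paper sidesteps the transport problem entirely with an explicit iterative argument. It first checks that $\cL(\Phi(\lambda))$ is cohomologous to $\lambda$, using that $\Gamma(m,-)$ is a quasi-isomorphism. For the harder direction it takes a nondegenerate $m$, sets $n=\Phi(\cL(m))$, and constructs a gauge element $x=x_{(2)}+x_{(3)}+\cdots$ with $e^{\ad x}m=n$ order by order in $\ell$: at each stage the defining equations of $\cL$ and $\Phi$ are subtracted, the nondegeneracy of $m_{(2)}$ is used to identify $C^*_{(2)}(A)\simeq C^{*-d}(A)$, and the hypothesis $HH^{-1}(A)=0$ is exactly what kills the resulting degree-$(-1)$ obstruction so that $x_{(\ell)}$ can be solved for. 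If you want to rescue your route you would have to supply an argument of this kind anyway, to show that $\exp(\mathfrak{g}^0)$-orbits coincide with path components of $MC_*(\mathfrak{g})$ on the nondegenerate locus; so I recommend carrying out the iterative construction directly rather than passing through a minimal model.
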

This is \emph{almost} a direct corollary of \cref{thm:simplicial} and \cref{prop:equiv}; the only reason why it does not follow directly is because we are not assuming that $\mathfrak{g}$ vanishes at chain level in negative degrees. Nevertheless, we can prove this fact explicitly, by an iterative calculation that we now sketch.
\begin{proof}
	We first note that even though the dg Lie algebra $\mathfrak{g}$ is not nilpotent, the action of $\mathfrak{g}$ is still well-defined; each element $x \in \mathfrak{g}$ is a sum of vertices with at least two outgoing arrows, so $[x,-]$ increases the number of outgoing arrows. So the sum defining $\exp(x)y$ is finite at each truncated level $\prod_{i > 2} C^*_{i,d}(A)[1] / \prod_{i > \ell} C^*_{i,d}(A)$, and the action lifts to the limit.

	Recall that we have maps
	\[ \Phi: CC^-_d(A)_\mathrm{nondeg} \longleftrightarrow \left(\cM_{d-\mathrm{preCY}}(A)\right)_\mathrm{nondeg}: \cL \]
	One of the directions is easier: let us pick $\lambda \in CC^-_d(A)_\mathrm{nondeg}$ and take $m = \Phi(\lambda)$; by definition this satisfies
	\[ e_m = \Gamma(m, \lambda) + [m,p] \]
	where $e_m$ is the `energy function' associated to $m$, $\Gamma$ is the sum of tube quivers we defined previously and $p \in \prod_{i > 2} C^1_{i,d}(A)$. Defining now $\lambda' = \cL(m)$, by definition we have
	\[ e_m = \Gamma(m,\lambda') + [m,q] \]
	for some other element $q \in \prod_{i > 2} C^1_{i,d}(A)$. Therefore $\Gamma(m,\lambda'-\lambda) = [m, q-p]$, and since $\Gamma(m,-)$ is a quasi-isomorphism in cohomology we have $[\lambda'] = [\lambda]$
	
	The remaining direction is harder and has to be done iteratively in $\ell$. We now start with a pre-CY structure $m$, take $\lambda = \cL(m)$ and $n = \Phi(\lambda)$; reusing the symbols $p,q$ these satisfy equations
	\[ e_m = \Gamma(m, \lambda) + [m,p], \quad e_n = \Gamma(n,\lambda) + [n,q] \]
	We have to prove that there is
	\[ x = x_{(2)} + x_{(3)} + \dots \in \prod_{\ell > 2} C^1_{\ell,d}(A) \]
	such that $n = m + [x,m] + \frac{1}{2}[x,[x,m]] + \dots$. At level $\ell = 2$, this is simply $n_{(2)} = m_{(2)} + [x_{(2)},\mu]$.
	
	Let us then write $s_{(2)} = m_{(2)} - n_{(2)}$. Subtracting the equations satisfied by $n_{(2)}, m_{(2)}$ we get
	\begin{align*}
		-s_{(2)} &= \Gamma(n,\lambda) - \Gamma(m,\lambda) + [\mu,q_{(2)}-p_{(2)}] \\
		= &-\frac{1}{2} \left( \tikzfig{
			\node [vertex] (x) at (0,0) {$\lambda_0$};
			\node at (2,0.3) {};
			\node at (-2,0.3) {};
			\node [bullet] (ne) at (0.7,0.7) {};
			\node [bullet] (e) at (1,0) {};
			\node [bullet] (w) at (-1,0) {};
			\draw [->-] (x) to (ne);
			\draw [->-=1] (w) to (-2,0);
			\draw [->-=1] (e) to (2,0);
			\draw [->-] (0,1) arc (90:45:1);
			\draw [->-] (0.7,0.7) arc (45:0:1);
			\draw [->-] (0,1) arc (90:180:1);
			\draw [->-] (0,-1) arc (-90:-180:1);
			\draw [->-] (0,-1) arc (-90:0:1);
			\node [vertex,fill=white] (n) at (0,1) {$s_{(2)}$};
			\node [vertex,fill=white] (s) at (0,-1) {$n_{(2)}$};
		} +\tikzfig{
			\node [vertex] (x) at (0,0) {$\lambda_0$};
			\node at (2,0.3) {};
			\node at (-2,0.3) {};
			\node [bullet] (sw) at (-0.7,-0.7) {};
			\node [bullet] (e) at (1,0) {};
			\node [bullet] (w) at (-1,0) {};
			\draw [->-] (x) to (sw);
			\draw [->-=1] (w) to (-2,0);
			\draw [->-=1] (e) to (2,0);
			\draw [->-] (0,1) arc (90:0:1);
			\draw [->-] (-0.7,-0.7) arc (-135:-180:1);
			\draw [->-] (0,1) arc (90:180:1);
			\draw [->-] (0,-1) arc (-90:-135:1);
			\draw [->-] (0,-1) arc (-90:0:1);
			\node [vertex,fill=white] (n) at (0,1) {$s_{(2)}$};
			\node [vertex,fill=white] (s) at (0,-1) {$n_{(2)}$};
		}\right) \\
	&-\frac{1}{2} \left( \tikzfig{
		\node [vertex] (x) at (0,0) {$\lambda_0$};
		\node at (2,0.3) {};
		\node at (-2,0.3) {};
		\node [bullet] (ne) at (0.7,0.7) {};
		\node [bullet] (e) at (1,0) {};
		\node [bullet] (w) at (-1,0) {};
		\draw [->-] (x) to (ne);
		\draw [->-=1] (w) to (-2,0);
		\draw [->-=1] (e) to (2,0);
		\draw [->-] (0,1) arc (90:45:1);
		\draw [->-] (0.7,0.7) arc (45:0:1);
		\draw [->-] (0,1) arc (90:180:1);
		\draw [->-] (0,-1) arc (-90:-180:1);
		\draw [->-] (0,-1) arc (-90:0:1);
		\node [vertex,fill=white] (n) at (0,1) {$m_{(2)}$};
		\node [vertex,fill=white] (s) at (0,-1) {$s_{(2)}$};
	} +\tikzfig{
		\node [vertex] (x) at (0,0) {$\lambda_0$};
		\node at (2,0.3) {};
		\node at (-2,0.3) {};
		\node [bullet] (sw) at (-0.7,-0.7) {};
		\node [bullet] (e) at (1,0) {};
		\node [bullet] (w) at (-1,0) {};
		\draw [->-] (x) to (sw);
		\draw [->-=1] (w) to (-2,0);
		\draw [->-=1] (e) to (2,0);
		\draw [->-] (0,1) arc (90:0:1);
		\draw [->-] (-0.7,-0.7) arc (-135:-180:1);
		\draw [->-] (0,1) arc (90:180:1);
		\draw [->-] (0,-1) arc (-90:-135:1);
		\draw [->-] (0,-1) arc (-90:0:1);
		\node [vertex,fill=white] (n) at (0,1) {$m_{(2)}$};
		\node [vertex,fill=white] (s) at (0,-1) {$s_{(2)}$};
	}\right) \\
	&+ [\mu,q_{(2)}-p_{(2)}]
	\end{align*}
	But since $\Gamma_{(2)}, \lambda_0$ are closed under the relevant differentials, and both $m_{(2)}$ and $n_{(2)}$ are representatives of the inverse of $\lambda_0$, each of the terms in the parentheses above evaluates to something cohomologous to $s_{(2)}$. In other words, there is $r_{(2)} \in C^{d-1}_{(2)}(A)$ such that
	\begin{align*} [\mu,r_{(2)}] &= 2 s_{(2)} - \frac{1}{2} \left( \tikzfig{
		\node [vertex] (x) at (0,0) {$\lambda_0$};
		\node at (2,0.3) {};
		\node at (-2,0.3) {};
		\node [bullet] (ne) at (0.7,0.7) {};
		\node [bullet] (e) at (1,0) {};
		\node [bullet] (w) at (-1,0) {};
		\draw [->-] (x) to (ne);
		\draw [->-=1] (w) to (-2,0);
		\draw [->-=1] (e) to (2,0);
		\draw [->-] (0,1) arc (90:45:1);
		\draw [->-] (0.7,0.7) arc (45:0:1);
		\draw [->-] (0,1) arc (90:180:1);
		\draw [->-] (0,-1) arc (-90:-180:1);
		\draw [->-] (0,-1) arc (-90:0:1);
		\node [vertex,fill=white] (n) at (0,1) {$s_{(2)}$};
		\node [vertex,fill=white] (s) at (0,-1) {$n_{(2)}$};
	} +\tikzfig{
		\node [vertex] (x) at (0,0) {$\lambda_0$};
		\node at (2,0.3) {};
		\node at (-2,0.3) {};
		\node [bullet] (sw) at (-0.7,-0.7) {};
		\node [bullet] (e) at (1,0) {};
		\node [bullet] (w) at (-1,0) {};
		\draw [->-] (x) to (sw);
		\draw [->-=1] (w) to (-2,0);
		\draw [->-=1] (e) to (2,0);
		\draw [->-] (0,1) arc (90:0:1);
		\draw [->-] (-0.7,-0.7) arc (-135:-180:1);
		\draw [->-] (0,1) arc (90:180:1);
		\draw [->-] (0,-1) arc (-90:-135:1);
		\draw [->-] (0,-1) arc (-90:0:1);
		\node [vertex,fill=white] (n) at (0,1) {$s_{(2)}$};
		\node [vertex,fill=white] (s) at (0,-1) {$n_{(2)}$};
	}\right) \\
	&+\frac{1}{2} \left( \tikzfig{
		\node [vertex] (x) at (0,0) {$\lambda_0$};
		\node at (2,0.3) {};
		\node at (-2,0.3) {};
		\node [bullet] (ne) at (0.7,0.7) {};
		\node [bullet] (e) at (1,0) {};
		\node [bullet] (w) at (-1,0) {};
		\draw [->-] (x) to (ne);
		\draw [->-=1] (w) to (-2,0);
		\draw [->-=1] (e) to (2,0);
		\draw [->-] (0,1) arc (90:45:1);
		\draw [->-] (0.7,0.7) arc (45:0:1);
		\draw [->-] (0,1) arc (90:180:1);
		\draw [->-] (0,-1) arc (-90:-180:1);
		\draw [->-] (0,-1) arc (-90:0:1);
		\node [vertex,fill=white] (n) at (0,1) {$m_{(2)}$};
		\node [vertex,fill=white] (s) at (0,-1) {$s_{(2)}$};
	} +\tikzfig{
		\node [vertex] (x) at (0,0) {$\lambda_0$};
		\node at (2,0.3) {};
		\node at (-2,0.3) {};
		\node [bullet] (sw) at (-0.7,-0.7) {};
		\node [bullet] (e) at (1,0) {};
		\node [bullet] (w) at (-1,0) {};
		\draw [->-] (x) to (sw);
		\draw [->-=1] (w) to (-2,0);
		\draw [->-=1] (e) to (2,0);
		\draw [->-] (0,1) arc (90:0:1);
		\draw [->-] (-0.7,-0.7) arc (-135:-180:1);
		\draw [->-] (0,1) arc (90:180:1);
		\draw [->-] (0,-1) arc (-90:-135:1);
		\draw [->-] (0,-1) arc (-90:0:1);
		\node [vertex,fill=white] (n) at (0,1) {$m_{(2)}$};
		\node [vertex,fill=white] (s) at (0,-1) {$s_{(2)}$};
	}\right)
\end{align*}
Using this fact we find that $s_{(2)} = [\mu, r_{(2)}+q_{(2)}+p_{(2)}]$, so there is a solution $s_{(2)} = [x_{(2)},\mu]$.

In order to solve the equation in the next order $\ell=3$, we also have to show that this solution for $x_{(2)}$ can be of a particular form. Substituting for $s_{(2)}$ in the equation, and again using the closedness of $\Gamma_{(2)},\lambda_0$ we have
\begin{align*}
	[\mu,x_{(2)}] = &\frac{1}{2} \left[ \mu,  \tikzfig{
		\node [vertex] (x) at (0,0) {$\lambda_0$};
		\node at (2,0.3) {};
		\node at (-2,0.3) {};
		\node [bullet] (ne) at (0.7,0.7) {};
		\node [bullet] (e) at (1,0) {};
		\node [bullet] (w) at (-1,0) {};
		\draw [->-] (x) to (ne);
		\draw [->-=1] (w) to (-2,0);
		\draw [->-=1] (e) to (2,0);
		\draw [->-] (0,1) arc (90:45:1);
		\draw [->-] (0.7,0.7) arc (45:0:1);
		\draw [->-] (0,1) arc (90:180:1);
		\draw [->-] (0,-1) arc (-90:-180:1);
		\draw [->-] (0,-1) arc (-90:0:1);
		\node [vertex,fill=white] (n) at (0,1) {$m_{(2)}$};
		\node [vertex,fill=white] (s) at (0,-1) {$x_{(2)}$};
	} +\tikzfig{
		\node [vertex] (x) at (0,0) {$\lambda_0$};
		\node at (2,0.3) {};
		\node at (-2,0.3) {};
		\node [bullet] (sw) at (-0.7,-0.7) {};
		\node [bullet] (e) at (1,0) {};
		\node [bullet] (w) at (-1,0) {};
		\draw [->-] (x) to (sw);
		\draw [->-=1] (w) to (-2,0);
		\draw [->-=1] (e) to (2,0);
		\draw [->-] (0,1) arc (90:0:1);
		\draw [->-] (-0.7,-0.7) arc (-135:-180:1);
		\draw [->-] (0,1) arc (90:180:1);
		\draw [->-] (0,-1) arc (-90:-135:1);
		\draw [->-] (0,-1) arc (-90:0:1);
		\node [vertex,fill=white] (n) at (0,1) {$m_{(2)}$};
		\node [vertex,fill=white] (s) at (0,-1) {$x_{(2)}$};
	} \right. \\
	&+ \left. \tikzfig{
	\node [vertex] (x) at (0,0) {$\lambda_0$};
	\node at (2,0.3) {};
	\node at (-2,0.3) {};
	\node [bullet] (ne) at (0.7,0.7) {};
	\node [bullet] (e) at (1,0) {};
	\node [bullet] (w) at (-1,0) {};
	\draw [->-] (x) to (ne);
	\draw [->-=1] (w) to (-2,0);
	\draw [->-=1] (e) to (2,0);
	\draw [->-] (0,1) arc (90:45:1);
	\draw [->-] (0.7,0.7) arc (45:0:1);
	\draw [->-] (0,1) arc (90:180:1);
	\draw [->-] (0,-1) arc (-90:-180:1);
	\draw [->-] (0,-1) arc (-90:0:1);
	\node [vertex,fill=white] (n) at (0,1) {$x_{(2)}$};
	\node [vertex,fill=white] (s) at (0,-1) {$n_{(2)}$};
} +\tikzfig{
\node [vertex] (x) at (0,0) {$\lambda_0$};
\node at (2,0.3) {};
\node at (-2,0.3) {};
\node [bullet] (sw) at (-0.7,-0.7) {};
\node [bullet] (e) at (1,0) {};
\node [bullet] (w) at (-1,0) {};
\draw [->-] (x) to (sw);
\draw [->-=1] (w) to (-2,0);
\draw [->-=1] (e) to (2,0);
\draw [->-] (0,1) arc (90:0:1);
\draw [->-] (-0.7,-0.7) arc (-135:-180:1);
\draw [->-] (0,1) arc (90:180:1);
\draw [->-] (0,-1) arc (-90:-135:1);
\draw [->-] (0,-1) arc (-90:0:1);
\node [vertex,fill=white] (n) at (0,1) {$x_{(2)}$};
\node [vertex,fill=white] (s) at (0,-1) {$n_{(2)}$};
} \right] \\
&+ [\mu,q_{(2)}-p_{(2)}]
\end{align*}

Note that $[\mu,x_{(2)}]$ is a closed element of degree $d-1$ in $C^*_{(2)}(A)$. Now, since the element $m_{(2)}$ was nondegenerate, we have a quasi-isomorphism of bimodules $A \cong A^![d]$ so we have a quasi-isomorphism of complexes
\[ C^*_{(2)}(A) \simeq \Hom_{A\mh A}(A,A^!) \simeq  \Hom_{A\mh A}(A,A[-d]) \simeq C^{*-d}(A)  \]
so since we assumed $HH^{-1}(A) = 0$ we can find $x_{(2)}$ solving the equation
\begin{align*}
x_{(2)} &= \frac{1}{2} \left(\tikzfig{
	\node [vertex] (x) at (0,0) {$\lambda_0$};
	\node at (2,0.3) {};
	\node at (-2,0.3) {};
	\node [bullet] (ne) at (0.7,0.7) {};
	\node [bullet] (e) at (1,0) {};
	\node [bullet] (w) at (-1,0) {};
	\draw [->-] (x) to (ne);
	\draw [->-=1] (w) to (-2,0);
	\draw [->-=1] (e) to (2,0);
	\draw [->-] (0,1) arc (90:45:1);
	\draw [->-] (0.7,0.7) arc (45:0:1);
	\draw [->-] (0,1) arc (90:180:1);
	\draw [->-] (0,-1) arc (-90:-180:1);
	\draw [->-] (0,-1) arc (-90:0:1);
	\node [vertex,fill=white] (n) at (0,1) {$m_{(2)}$};
	\node [vertex,fill=white] (s) at (0,-1) {$x_{(2)}$};
} +\tikzfig{
	\node [vertex] (x) at (0,0) {$\lambda_0$};
	\node at (2,0.3) {};
	\node at (-2,0.3) {};
	\node [bullet] (sw) at (-0.7,-0.7) {};
	\node [bullet] (e) at (1,0) {};
	\node [bullet] (w) at (-1,0) {};
	\draw [->-] (x) to (sw);
	\draw [->-=1] (w) to (-2,0);
	\draw [->-=1] (e) to (2,0);
	\draw [->-] (0,1) arc (90:0:1);
	\draw [->-] (-0.7,-0.7) arc (-135:-180:1);
	\draw [->-] (0,1) arc (90:180:1);
	\draw [->-] (0,-1) arc (-90:-135:1);
	\draw [->-] (0,-1) arc (-90:0:1);
	\node [vertex,fill=white] (n) at (0,1) {$m_{(2)}$};
	\node [vertex,fill=white] (s) at (0,-1) {$x_{(2)}$};
} \right. \\
&+ \left. \tikzfig{
	\node [vertex] (x) at (0,0) {$\lambda_0$};
	\node at (2,0.3) {};
	\node at (-2,0.3) {};
	\node [bullet] (ne) at (0.7,0.7) {};
	\node [bullet] (e) at (1,0) {};
	\node [bullet] (w) at (-1,0) {};
	\draw [->-] (x) to (ne);
	\draw [->-=1] (w) to (-2,0);
	\draw [->-=1] (e) to (2,0);
	\draw [->-] (0,1) arc (90:45:1);
	\draw [->-] (0.7,0.7) arc (45:0:1);
	\draw [->-] (0,1) arc (90:180:1);
	\draw [->-] (0,-1) arc (-90:-180:1);
	\draw [->-] (0,-1) arc (-90:0:1);
	\node [vertex,fill=white] (n) at (0,1) {$x_{(2)}$};
	\node [vertex,fill=white] (s) at (0,-1) {$n_{(2)}$};
} +\tikzfig{
	\node [vertex] (x) at (0,0) {$\lambda_0$};
	\node at (2,0.3) {};
	\node at (-2,0.3) {};
	\node [bullet] (sw) at (-0.7,-0.7) {};
	\node [bullet] (e) at (1,0) {};
	\node [bullet] (w) at (-1,0) {};
	\draw [->-] (x) to (sw);
	\draw [->-=1] (w) to (-2,0);
	\draw [->-=1] (e) to (2,0);
	\draw [->-] (0,1) arc (90:0:1);
	\draw [->-] (-0.7,-0.7) arc (-135:-180:1);
	\draw [->-] (0,1) arc (90:180:1);
	\draw [->-] (0,-1) arc (-90:-135:1);
	\draw [->-] (0,-1) arc (-90:0:1);
	\node [vertex,fill=white] (n) at (0,1) {$x_{(2)}$};
	\node [vertex,fill=white] (s) at (0,-1) {$n_{(2)}$};
} \right) + q_{(2)}-p_{(2)}
\end{align*}
up to $[\mu,-]$-exact terms. For the next step $\ell=3$ we write $n_{(3)} = m_{(3)} + [x_{(2)},m_{(2)}]+ \frac{1}{2}[x_{(2)},[x_{(2)},\mu]] - s_{(3)}$ and write the analogous equation, substituting in the above solution for $x_{(2)}$, solving for $x_{(3)}$ in $s_{(3)} = [\mu,x_{(3)}]$. Proceeding like that for each $\ell$ gives a solution $x = x_{(2)} + x_{(3)}+ \dots$ for the `gauge transformation' taking $m$ to $n$.	
\end{proof}

\printbibliography

\end{document}